\documentclass[12pt]{article} 
\usepackage[letterpaper,margin=1in]{geometry}
\usepackage{fancyhdr,color}
\usepackage{tikz,graphicx,multicol}
\usepackage{amssymb,euscript,nicefrac,enumitem}
\usepackage{amsfonts,amsmath,amsthm} 
\usepackage{scrextend}
\usepackage{ytableau} 
    \ytableausetup{boxsize=1.0em,aligntableaux=center}
\usepackage[noend]{algpseudocode}
\usepackage{extarrows}
\usepackage{shuffle}
\usepackage{pdfpages}
\usepackage{algorithm}
\usepackage{arydshln}
\usetikzlibrary{cd}

\newtheorem{theorem}{Theorem}
\newtheorem{corollary}{Corollary}
\newtheorem{lemma}{Lemma}
\newtheorem{proposition}{Proposition}

\theoremstyle{definition}
\newtheorem{definition}{Definition}
\newtheorem{remark}{Remark}
\newtheorem{example}{Example}

\definecolor{myred}{rgb}{.7,.1,.1}
\definecolor{myblue}{rgb}{.1,.1,.7}
\definecolor{mygreen}{rgb}{.1,.6,.1}
\definecolor{mygray}{rgb}{0.5,0.5,0.5}
\definecolor{mymauve}{rgb}{0.58,0,0.82}
\definecolor{lablue1}{rgb}{0,0.9,0.93}
\definecolor{lablue2}{rgb}{0,0.5,1}

\def\gdot{\color{black!25}{\scriptstyle\cdot}}
\usepackage{listings}

\lstset{ 
  backgroundcolor=\color{white},   
  basicstyle=\footnotesize,        
  breaklines=true,                 
  captionpos=b,                    
  commentstyle=\color{brown!70},   
  escapeinside={\%*}{*)},        
  keywordstyle=\color{myblue},   
  stringstyle=\color{myred},     
}

\pagestyle{fancy}
  \lhead{} \chead{\footnotesize Powersum Basis for QSym and NCQSym} \rhead{}
  \lfoot{} \cfoot{\footnotesize \thepage} \rfoot{}

\fancypagestyle{plain}{
  
  \fancyhf{}
  \fancyhead[L]{}
  \fancyhead[C]{}
  \fancyhead[R]{}
  \fancyfoot[C]{\footnotesize \thepage} 
}

\sloppy
\parindent 1em
\linespread{1.1}

\def\sym{\mathrm{Sym}}
\def\qsym{\mathrm{QSym}}
\def\nsym{\mathrm{NSym}}
\def\ncsym{\mathrm{NCSym}}
\def\ncqsym{\mathrm{NCQSym}}
\def\fqsym{\mathrm{FQSym}}

\def\setcomp{\mathsf{setcomp}}
\def\sort{\mathsf{sort}}
\def\hgt{\mathsf{ht}}
\def\len{\mathsf{len}}
\def\row{\mathsf{row}}
\def\col{\mathsf{col}}

\def\diag{\mathsf{diag}}
\def\push{\mathsf{push}}
\def\qshuf{\widetilde{\shuffle}}

\def\med{\text{Med}}

\def\rsrt{\mathsf{sort}_r}
\def\pus{\mathsf{push}}

\def\sd{\mathcal{SD}}
\def\dd{\mathcal{DD}}

\def\sld{\mathcal{SLD}}
\def\ldd{\mathcal{LDD}}
\def\lsr{\mathcal{LSR}}

\def\sdr{\mathsf{SDR}}

\DeclareMathAlphabet{\mathpzc}{OT1}{pzc}{m}{it}

\usepackage[backend=bibtex,maxbibnames=99]{biblatex}
\addbibresource{references.bib}

\title{Powersum Bases in Quasisymmetric Functions and Quasisymmetric Functions in Non-commuting Variables}
\author{Anthony Lazzeroni}
\date{}

\makeatletter
\let\theauthors\@author
\makeatother

\begin{document}

\maketitle 
\thispagestyle{plain}
\abstract{We introduce a new $P$ basis for the Hopf algebra of quasisymmetric functions that refine the symmetric powersum basis. Unlike the quasisymmetric power sums of types 1 and 2, our basis is defined combinatorially: its expansion in  quasisymmetric monomial functions is given by fillings of matrices. This basis has a shuffle product, a deconcatenate coproduct, and has a change of basis rule to the quasisymmetric fundamental basis by using tuples of ribbons. We lift our quasisymmetric powersum $P$ basis to the Hopf algebra of quasisymmetric functions in non-commuting variables by introducing fillings with disjoint sets. This new basis has a shifted shuffle product and a standard deconcatenate coproduct, and certain basis elements agree with the fundamental basis of the Malvenuto-Reutenauer Hopf algebra of permutations. Finally we discuss how to generalize these bases and their properties by using total orders on indices.}

\section{Introduction}

The Hopf algebra of symmetric functions, denoted $\sym$ and indexed by integer partitions $\lambda$, is a very well known space for its connections in representation theory and other areas of mathematics. Some of the well studied bases are the \textit{monomial} basis $m_\lambda$, \textit{powersum} basis $p_\lambda$, and \textit{Schur} basis $s_\lambda$. In \cite{macdonald1998symmetric} the change of basis from $p_\lambda$ to $m_\mu$ is illustrated by fillings. In this work, a \textit{filling} $\mathsf{F}$ is a rectangular matrix with one non-zero entry in every row and no zero columns. The column (respectively, row) reading is a composition recording the sum of all entries in each column (respectively, row) denoted as $\col(\mathsf{F})$ (respectively, $\row(\mathsf{F})$). Thus the change of basis is combinatorially defined as 
\begin{equation} \label{eqn:ptom}
    p_\lambda = \sum_{\mathsf{F}\in \mathcal{A}(\lambda)}m_{\col(\mathsf{F})}
\end{equation}
where $\mathcal{A}(\lambda)$ is the set of all distinct fillings with row reading $\lambda$ and the column sum a partition. For example, $p_{(2,2,1)}=2m_{(2,2,1)}+2m_{(3,2)}+m_{(4,1)}+m_{(5)}$ since the fillings of $\mathcal{A}(2,2,1)$ are
\[ \footnotesize
\begin{array}{c|ccc}
2 & 2 & \gdot  & \gdot  \\
2 & \gdot  & 2 & \gdot  \\
1 & \gdot  & \gdot  & 1 \\
\hline
& 2 & 2 & 1
\end{array}
\qquad
\begin{array}{c|ccc}
2 & \gdot  & 2 & \gdot  \\
2 & 2 & \gdot  & \gdot  \\
1 & \gdot  & \gdot  & 1 \\
\hline
& 2 & 2 & 1
\end{array}
\qquad
\begin{array}{c|ccc}
2 & 2      & \gdot  & \gdot  \\
2 & \gdot  & 2      & \gdot  \\
1 & 1      & \gdot  & \gdot  \\
\hline
& 3 & 2 & 
\end{array}
\]\[\footnotesize
\begin{array}{c|ccc}
2 & \gdot  & 2      & \gdot  \\
2 & 2      & \gdot  & \gdot  \\
1 & 1      & \gdot  & \gdot  \\
\hline
& 3 & 2 & 
\end{array}
\qquad
\begin{array}{c|ccc}
2 & 2     & \gdot  & \gdot  \\
2 & 2     & \gdot  & \gdot  \\
1 & \gdot & 1      & \gdot  \\
\hline
& 4 & 1 & 
\end{array}
\qquad
\begin{array}{c|ccc}
2 & 2  & \gdot  & \gdot  \\
2 & 2  & \gdot  & \gdot  \\
1 & 1  & \gdot  & \gdot  \\
\hline
& 5 &  & 
\end{array}. 
\]
One of the important properties of the powersum basis is the Murnaghan-Nakayama rule which illustrates the product rule of $s_\lambda$ and $p_\mu$ expanded in terms of Schur functions. Giving, as corollary, the change of basis formula from the powersum basis to the Schur basis, which is important for its connection to the character table of $\mathfrak{S}_n$.

A space that contains $\sym$ is the Hopf algebra of quasisymmetric functions, $\qsym$, and is indexed by compositions $\alpha$. This space was defined in \cite{IraM.Gessel1984MultipartiteFunctions} by using $P$-partitions, which gives rise to the fundamental quasisymmetric functions $F_\alpha$. In \cite{Ballantine2020OnSums} the authors studied two quasisymmetric powersum bases (i.e. a basis of $\qsym$ that refines $p_\lambda$) $\Phi$ and $\Psi$ whose duals are the $\mathbf{\Phi}$ and $\mathbf{\Psi}$ bases in $\nsym$ which is defined in \cite{Gelfand1995NoncommutativeFunctions}. These bases of NSym aren't defined combinatorially, but via formal series. The $\Psi$ basis is most notable for the change of basis to the fundamental basis by using $P$-partitions as defined in \cite{Alexandersson2021P-PartitionsP-Positivity}. In \cite{aliniaeifard2021peak} the authors introduced the Shuffle basis $S_\alpha$, which also refines $p_\lambda$, is notable because $S_\alpha$ is an eigenvector under the theta map $\Theta$.

In this paper we define a quasisymmetric powersum basis $P_\alpha$ combinatorially by using fillings in an analog of \eqref{eqn:ptom}. Alternatively, $P_\alpha$ can be defined using a subposet of the refinement poset $\mathcal{P}$ on compositions.  This basis has a shuffle product, a deconcatenate coproduct, they refine the symmetric powersum basis and is dual (up to scaling) to the Zassenhaus functions $Z_\alpha$ in $\nsym$, the algebra of non-commutative symmetric functions, as defined in \cite{krob1997noncommutative}. All of these results are stated in Sections \ref{sec:basdef} and \ref{sec:scapwr}. We note that this powersum basis was independently defined in \cite{aliniaeifard2021p} using weighted generating functions of P-partitions. In Section \ref{sec:CoB}, we show a Murnaghan-Nakayama like change of basis rule from the quasisymmetric powersum basis to the quasisymmetric fundamental basis by use of tuples of ribbons. 

In Section \ref{sec:faminv}, we note that both the filling and the subposet notions have generalizations that depend on a total order $\preceq$ on the parts of the compositions. Hence we can define a whole family of quasisymmetric powersum bases, denoted as $P^{\succeq}_\alpha$ for different choices of $\succeq$, so that all the properties described are held for any choice of $\succeq$.

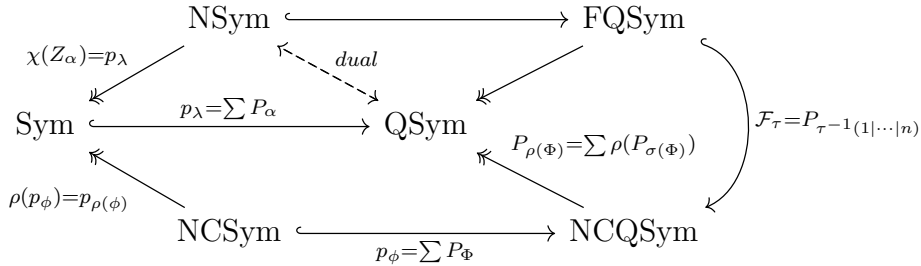
\begin{figure}[h] 
\begin{center}
\begin{tikzcd} 
 & \nsym \arrow[rr, hook] \arrow[dl, twoheadrightarrow, "\chi(Z_\alpha)=p_\lambda"'] & & \fqsym  \arrow[dl, twoheadrightarrow] \arrow[dd, hook, bend left=70, "\mathcal{F}_\tau=P_{\tau^{-1}(1|\cdots|n)}" ] \\
\sym \arrow[rr, hook, "p_\lambda = \sum P_\alpha"] & & \qsym \arrow[ul, dashleftarrow] \arrow[ul, dashrightarrow, "dual"'] \arrow[dr, twoheadleftarrow, "P_{\rho(\Phi)} = \sum \rho(P_{\sigma(\Phi)})" near start ]\\
 & \ncsym \arrow[ul, twoheadrightarrow, "{\rho(p_\phi)=p_{\rho(\phi)}}"] \arrow[rr, hook, "p_\phi = \sum P_\Phi"'] & & \ncqsym
\end{tikzcd}
\end{center}
\caption{Diagram of some Hopf Algebras related to $\qsym$}
\label{fig:alg}
\end{figure}

Finally, in Section \ref{sec:othralg} we show how one might use this method to define a powersum basis in other algebras.

Figure \ref{fig:alg} above summarizes the relationships between our new powersum bases for $\qsym$ and $\ncqsym$ and existing powersum bases in other algebras.

\subsection*{Acknowledgments}
The author would like to thank his advisor Amy Pang for all the help and guidance with the development of this project and the development of the author as a mathematician. The author would like to thank Aaron Lauve for this project, a lot of the code used, editing and guidance. The author would also like to thank Travis Scrimshaw for all of his helpful comments. Finally the author would like to thank the Sage community.

\section{Preliminaries for QSym}
Given an integer composition of $n$ of length $k$, $\alpha = (a_1,a_2,\ldots,a_k)$, let $m_i(\alpha)$ denote the multiplicities of the part size $i$ within $\alpha$, and denote the weak composition of size multiplicities by $m(\alpha)=(m_1(\alpha),\dotsc,m_n(\alpha))$. Let $\mathcal{P}$ be a poset on compositions with the cover relation $\alpha\lessdot\beta$ if $\beta$ is obtained from $\alpha$ by summing two adjacent parts, i.e. $\beta = (a_1,\dotsc,a_i+a_{i+1},\dotsc,a_k)$.

For the purpose of several proofs in Section \ref{sec:proj}, the standard bijection between compositions of $n$ and subsets of $[n-1]$ will be useful:
\[
(a_1,\ldots,a_k) \longleftrightarrow \{ a_1, a_1+a_2,\ldots, a_1+\cdots+a_{k-1} \}.
\]
Using this identification, we see $\alpha\lessdot\beta$ if and only if $\alpha = \beta \cup \{k\}$ for some $k\in[n-1]$.

Let $X=\{ x_1,x_2\ldots \}$ be a set of variables and $\alpha = (a_1,\ldots,a_k)$ be a composition of $n$. Then a generating function is quasisymmetric if, for every $k$ and $i_1<i_2<\cdots<i_k$, the coefficient of $x_{i_1}^{\alpha_1}x_{i_2}^{\alpha_2}\cdots x_{i_k}^{\alpha_k}$ is equal to the coefficient of $x_{1}^{\alpha_1}x_{2}^{\alpha_2}\cdots x_{k}^{\alpha_k}$. When $|X| = \infty$, the set of all quasisymmetric functions is a graded Hopf algebra denoted as $\qsym$. One of the more natural bases of $\qsym$ is the \textit{quasisymmetric monomial basis} $M_\alpha$, defined as
\[
M_\alpha = \sum_{i_1<i_2<\cdots<i_k}x_{i_1}^{\alpha_1}x_{i_2}^{\alpha_2}\cdots x_{i_k}^{\alpha_k}. 
\]
$\qsym$ is most notable for the \textit{quasisymmetric fundamental basis} $F_\alpha$ which proves useful for countless enumeration problems and also corresponds to the characters of the 0-Hecke algebra. It is defined as 
\[
F_\alpha = \sum_{\beta\leq\alpha} M_\beta.
\]

\section{Powersum functions in QSym} \label{sec:basdef}
\subsection{Fillings Interpretation}

A \textit{filling} of a composition $\alpha=(a_1,\dotsc, a_k)$, denoted $\mathsf{F}$, is a $k\times k$ matrix where each $a_i$ can be placed anywhere in row $i$. The \textit{column sum} of a filling, denoted $\col (\mathsf{F})$, is the composition $\beta = (b_1, \dotsc, b_l)$ such that the sum of all the entries in column $j$ is $b_j$. The \textit{row reading}, $\row(\mathsf{F})$, has a similar definition, i.e. $\row(\mathsf{F})=\alpha$. The following two fillings are important to the sequel.

\begin{remark}
Fillings can be defined more generally to generate other bases in $\sym$, see \cite{stanley1999enumerative}, but for the subject of this paper the definition above suffices.
\end{remark}

\begin{definition} \label{def:DD}
A \textit{Diagonal Descending} filling is a filling such that:
\begin{enumerate}
    \item entry $a_1$ is in the upper leftmost corner of the matrix.
    \item entry $a_i$ is in row $i$.
    \item \begin{enumerate}
        \item if $a_{i-1}\geq a_{i}$ then $a_{i}$ is directly below $a_{i-1}$ or in the southeast position of $a_{i-1}$
        \item if $a_{i-1}<a_i$ then $a_i$ is in the southeast position of $a_{i-1}$.
        \end{enumerate}
    \item row $i$ and $j$ can permute only if $a_i = a_j$ and $a_i$ is not in the same column as $a_j$.
\end{enumerate}
Denote the set of all diagonal descending fillings with row reading $\alpha$ as $\dd(\alpha)$.
\end{definition}

\begin{example} \label{exa:ddfill}
$\dd(212)$ consists of the following four fillings
\[
\begin{array}{|ccc}
2 & \gdot  & \gdot  \\
\gdot  & 1 & \gdot  \\
\gdot  & \gdot  & 2 \\
\hline
2 & 1 & 2
\end{array}
\qquad
\begin{array}{|ccc}
2 & \gdot  & \gdot \\
1 & \gdot   & \gdot \\
\gdot  & 2 & \gdot \\
\hline
3 & 2
\end{array}
\qquad
\begin{array}{|ccc}
\gdot  & \gdot  & 2 \\
\gdot  & 1 & \gdot  \\
2 & \gdot  & \gdot  \\
\hline
2 & 1 & 2
\end{array}
\qquad
\begin{array}{|ccc}
\gdot  & 2 & \gdot \\
1 & \gdot & \gdot \\
2 & \gdot  & \gdot \\
\hline
3 & 2
\end{array}
\]
\end{example}

\begin{definition}
Let $\alpha$ be a composition of $n$, then the \textit{descending quasisymmetric powersum function} is defined as

\begin{equation}\label{eqn:PtoMDD}
    P_\alpha = \sum_{\mathsf{F}\in \dd(\alpha)}M_{\col(\mathsf{F})}.
\end{equation}
\end{definition}

It is clear that these quasipowersums form a basis of QSym as they have a triangular change of basis from the monomials.

\begin{remark}
We briefly mention other families of quasipowersums in Section \ref{sec:faminv}, elsewhere $P_\alpha$ will mean $P_\alpha^\mathcal{D}$.
\end{remark}

Thus from Example \ref{exa:ddfill}, $P_{212} = 2M_{212}+2M_{32}$. It will be convenient to group fillings by row permutation type. First we define orbit representatives for this the group action.

\begin{definition} \label{def:strdia}
A \textit{Strict Diagonal} filling is a filling such that Conditions 1-3 of Definition \ref{def:DD} are satisfied. Denote the set of strict diagonal fillings with $\row(\mathsf{F})=\alpha$ as $\sd(\alpha)$.
\end{definition}
The first two fillings of Example \ref{exa:ddfill} are all the fillings in $\sd(212)$. Let $\alpha$ be a composition of $n$ and $m_i(\alpha)$ denote the number of parts of size $i$. Then $\sigma \in \mathfrak{S}_{m_1(\alpha)}\times \cdots \times \mathfrak{S}_{m_n(\alpha)} = \mathfrak{S}_{m(\alpha)}$ is a \textit{row permutation} and acts on fillings by permuting two rows that contain parts of like sizes. Let $\mathfrak{S}_\mathsf{F}$ be the coset of $\mathfrak{S}_{m(\alpha)}$ containing $\sigma$ such that for any two entries $a_i = a_j$, then $\sigma(j) = k$ implies that $a_j$ and $a_k$ are not in the same column. Note that $\col(\sigma\mathsf{F})=\col(\mathsf{F})$ and $\mathfrak{S}_{\mathsf{F}}$ is dependent only on $\row(\mathsf{F})$ and $\col(\mathsf{F})$.

Notice that we can define a $\dd$ filling as a pair of $\sd$ filling and a row permutation, which leads us to an alternate definition,
\begin{equation} \label{eqn:PtoMSD}
    P_\alpha =\sum_{\substack{\mathsf{F}\in \sd(\alpha)\\ \sigma \in \mathfrak{S}_{\mathsf{F}}}}M_{\col(\sigma(\mathsf{F}))}=\sum_{\mathsf{F}\in\sd(\alpha)}|\mathfrak{S}_\mathsf{F}|M_{\col(\mathsf{F})},
\end{equation}
where the last equality comes from the fact that $\col(\sigma\mathsf{F})=\col(\mathsf{F})$.

In \cite{aliniaeifard2021p} the authors independently defined a powersum basis using weighted $P$-partitions, that, when translated to fillings, is precisely this one: define $\pus$ to be a map of fillings by a sorting of rows such that the reading word is a partition. Then the image of $\dd(\alpha)$ with column sum $\beta$ under $\pus$ is the set of fillings $\mathfrak{R}_{\alpha\beta}$ as described in \cite{aliniaeifard2021p}. They also prove the following result.
\begin{theorem} \label{thm:refine}
The quasisymmetric powersum functions refine the symmetric powersum functions. In other words,
\begin{equation} \label{eqn:refine}
p_\lambda=\sum_{\alpha:\sort(\alpha)=\lambda}P_\alpha
\end{equation}
\end{theorem}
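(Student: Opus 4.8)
The plan is to expand both sides in the quasisymmetric monomial basis and compare the coefficient of each $M_\gamma$. First I would rewrite $p_\lambda$ itself in terms of fillings and monomials. Expanding $p_\lambda=\prod_{i}p_{\lambda_i}=\prod_i\bigl(\sum_{j\ge1}x_j^{\lambda_i}\bigr)=\sum_{(j_1,\dots,j_k)}x_{j_1}^{\lambda_1}\cdots x_{j_k}^{\lambda_k}$ and collecting the index tuples $(j_1,\dots,j_k)$ according to their ``order pattern'' — which entries coincide and in what relative order, equivalently a surjection $c\colon[k]\twoheadrightarrow\{1,\dots,l\}$ — gives
\[
p_\lambda=\sum_{\substack{\mathsf F\ \text{a filling}\\ \row(\mathsf F)=\lambda}}M_{\col(\mathsf F)},
\]
the sum now ranging over \emph{all} fillings with row reading $\lambda$ (the composition $\col(\mathsf F)$ need not be a partition): a surjection $c$ corresponds to the filling placing $\lambda_i$ in column $c(i)$, and then $\col(\mathsf F)_t=\sum_{c(i)=t}\lambda_i$. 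Equivalently this is \eqref{eqn:ptom} combined with $m_\mu=\sum_{\sort(\gamma)=\mu}M_\gamma$, reorganized so the columns are no longer required to be sorted.

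Combining this with \eqref{eqn:PtoMDD} applied to each $P_\alpha$, the identity \eqref{eqn:refine} becomes the statement that for every composition $\gamma$
\[
\#\{\mathsf F:\row(\mathsf F)=\lambda,\ \col(\mathsf F)=\gamma\}\;=\;\sum_{\alpha:\sort(\alpha)=\lambda}\#\{\mathsf H\in\dd(\alpha):\col(\mathsf H)=\gamma\}.
\]
The left-hand side counts ordered set partitions $(R_1,\dots,R_l)$ of $[k]$ with $\sum_{i\in R_t}\lambda_i=\gamma_t$, where $R_t$ records the rows whose entry lies in column $t$. For the right-hand side I would invoke \eqref{eqn:PtoMSD}, which turns it into $\sum_{\sort(\alpha)=\lambda}\sum_{\mathsf F_0\in\sd(\alpha),\ \col(\mathsf F_0)=\gamma}|\mathfrak S_{\mathsf F_0}|$. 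A strict diagonal filling is nothing but a list of consecutive column-blocks, each internally weakly decreasing, so $\bigsqcup_{\sort(\alpha)=\lambda}\{\mathsf F_0\in\sd(\alpha):\col(\mathsf F_0)=\gamma\}$ is in bijection with ordered \emph{multiset} partitions $(B_1,\dots,B_l)$ of the multiset of parts of $\lambda$ satisfying $\sum B_t=\gamma_t$ (one recovers $\alpha$ by listing each block in weakly decreasing order and concatenating). Moreover $|\mathfrak S_{\mathsf F_0}|$ is the number of distinct fillings obtained from $\mathsf F_0$ by permuting rows carrying equal parts, namely the multinomial $\prod_v\binom{m_v(\lambda)}{b_{1,v},\dots,b_{l,v}}$, where $b_{t,v}$ is the number of parts equal to $v$ in $B_t$. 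Thus the right-hand side equals $\sum_{(B_1,\dots,B_l)}\prod_v\binom{m_v(\lambda)}{b_{1,v},\dots,b_{l,v}}$.

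It then remains to see the left-hand side equals the same sum. Grouping the ordered set partitions $(R_1,\dots,R_l)$ by the ordered multiset partition $(B_1,\dots,B_l)$ they induce, with $B_t=\{\lambda_i:i\in R_t\}$, the fiber over a fixed $(B_1,\dots,B_l)$ has size $\prod_v\binom{m_v(\lambda)}{b_{1,v},\dots,b_{l,v}}$: to specify $(R_1,\dots,R_l)$ lying over a given $(B_1,\dots,B_l)$ one decides, independently for each part size $v$, which $b_{t,v}$ of the $m_v(\lambda)$ rows with entry $v$ are placed in each block. Hence the two sides agree and \eqref{eqn:refine} follows.

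The main obstacle is the bookkeeping on the right-hand side: verifying that the ``each block internally weakly decreasing'' constraint on $\sd$ fillings corresponds exactly to ``choose a multiset and list it in decreasing order'' with neither over- nor under-counting as $\alpha$ ranges over rearrangements of $\lambda$, and extracting the multinomial formula for $|\mathfrak S_{\mathsf F_0}|$ cleanly from the definition of the coset $\mathfrak S_{\mathsf F}$ — in particular in the case where one column of $\mathsf F_0$ already holds several equal parts, which contributes a $b_{t,v}!$ in the denominator rather than producing extra fillings. Everything else is routine coefficient comparison. For completeness I note, as remarked just before the statement, that this theorem is due to \cite{aliniaeifard2021p}, proved there by means of weighted $P$-partitions; the argument outlined here is its translation into the language of fillings, and alternatively one could simply cite their result through the dictionary described above.
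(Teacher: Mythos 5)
Your argument is correct, and it is exactly the strategy the paper itself sketches in the remark following the theorem (the paper does not write out a proof, instead citing \cite{aliniaeifard2021p}, whose argument goes through weighted $P$-partitions, and carrying out the detailed analogue for $\ncqsym$ in the proof of Theorem \ref{thm:toncsym}): expand both sides in the quasimonomial basis and match the coefficient of each $M_\gamma$, the left side counted by ordered set partitions $(R_1,\dots,R_l)$ of the row indices and the right side by ordered multiset partitions $(B_1,\dots,B_l)$ of the parts of $\lambda$ weighted by $|\mathfrak S_{\mathsf F_0}|=\prod_v\binom{m_v(\lambda)}{b_{1,v},\dots,b_{l,v}}$. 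The one point where you must do genuinely more work than the $\ncqsym$ analogue --- there the blocks are labelled sets, so $\push$ and $\diag$ give an honest column-preserving bijection with no multiplicities, whereas here equal parts force the multinomial fiber count --- is handled correctly in your grouping of the $(R_1,\dots,R_l)$ over a fixed $(B_1,\dots,B_l)$.
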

We remark that one can prove this refinement property by changing $p_\lambda$ and $P_\alpha$ into the quasimonomial basis and showing that the two expressions are equivalent. An analogue of this proof is in Section \ref{sec:NCQrefine}.

\subsection{Subposet Interpretation}
Notice that we can define these quasipowersums by subposets as all fillings only coarsen according to the row reading. Given compositions $\alpha = (a_1,\ldots,a_k)\leq \beta = (b_1,\ldots,b_l)$, there exists $1\leq k_1\leq\dotsc\leq k_j\leq k$ such that
$$b_1 = \sum_{i=1}^{k_1}a_i, \ b_2 = \sum_{i=k_1+1}^{k_2}a_i, \ \ldots ,\ b_l = \sum_{i=k_j+1}^{k}a_i.$$
Given a positive integer $L$, define a composition $c_L(\alpha,\beta)$ as 
$$
c_L(\alpha,\beta) = \left( 
\sum_{i=1}^{k_1}\delta_{La_i}, \  \sum_{i=k_1+1}^{k_2}\delta_{La_i}, \ \ldots \ , \sum_{i=k_j+1}^{k}\delta_{La_i} \right)
$$
where $\delta_{ij}$ is the Kronecker delta. Let $c_L(\alpha,\beta)!$ be the product of the factorials of the parts of $c_L(\alpha,\beta)$.

\begin{example}
Let $\alpha =(3,2,1;1;3;1;1,1;2,1) $ and $\beta = (6,1,3,1,2,3)$. Then 
\begin{align*}
    c_1(\alpha,\beta) &= (0+0+1,1,0,1,1+1,0+1) = (1,1,1,2,1) \\
    c_2(\alpha,\beta) &= (0+1+0,0,0,0,0+0,1+0) = (1,1)\\
    c_3(\alpha,\beta) &= (1+0+0,0,1,0,0+0,0+0) = (1,1)\\
    c_1(\alpha,\beta)!&= 1!\times1!\times1!\times2!\times1! = 2.
\end{align*}

\end{example}

The following proposition is needed in proving formulas for the product, coproduct, and that the dual basis is the Zassenhaus basis which are Theorems \ref{thm:coprod}, \ref{thm:Qprod}, and \ref{thm:dual} respectively.

\begin{proposition}\label{prp:coefcat}
Let $\alpha,\alpha',\beta,\beta'$ be compositions such that $\alpha\leq\alpha'$ and $\beta\leq\beta'$. Then 
\[
c_i(\alpha|\beta,\alpha'|\beta)=c_i(\alpha|\alpha')|c_i(\beta,\beta')
\]
where $\alpha|\alpha'$ denotes the concatenation of $\alpha$ and $\alpha'$.
\end{proposition}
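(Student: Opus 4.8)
The plan is to work with the encoding of a composition $\gamma$ of $N$ as the subset $S_\gamma \subseteq [N-1]$ of its partial sums, under which $\gamma \leq \delta$ if and only if $S_\delta \subseteq S_\gamma$, and for which concatenation satisfies $S_{\mu|\nu} = S_\mu \cup \{N\} \cup (N + S_\nu)$ whenever $\mu$ is a composition of $N$, writing $N + S_\nu = \{N + s : s \in S_\nu\}$. First I would set $n = |\alpha| = |\alpha'|$, so that $S_{\alpha|\beta} = S_\alpha \cup \{n\} \cup (n + S_\beta)$ and $S_{\alpha'|\beta'} = S_{\alpha'} \cup \{n\} \cup (n + S_{\beta'})$; since $S_{\alpha'} \subseteq S_\alpha$ and $S_{\beta'} \subseteq S_\beta$ we get $\alpha|\beta \leq \alpha'|\beta'$, so $c_i(\alpha|\beta, \alpha'|\beta')$ is defined.

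The key observation is that $n \in S_{\alpha'|\beta'}$. The grouping of the parts of $\alpha|\beta$ that yields $\alpha'|\beta'$ cuts the list of parts exactly at the positions whose partial sum lies in $S_{\alpha'|\beta'}$; since $\alpha|\beta$ has partial sum $n$ after its $\len(\alpha)$-th part, there is a forced cut there, so every block of the grouping is contained either in the first $\len(\alpha)$ parts (the parts of $\alpha$) or in the remaining parts (the parts of $\beta$). Applying the same subset description to $S_{\alpha'} \subseteq S_\alpha$ inside $[n-1]$ shows that the blocks contained in the $\alpha$-parts are precisely the blocks of the grouping of $\alpha$ into $\alpha'$, and subtracting $n$ from the positions shows that the blocks contained in the $\beta$-parts are precisely the blocks of the grouping of $\beta$ into $\beta'$. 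Hence the index data $k_1 \leq \cdots \leq k_j$ defining $c_i(\alpha|\beta,\alpha'|\beta')$ is the concatenation of the index data for $(\alpha,\alpha')$ with a shifted copy of that for $(\beta,\beta')$.

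Feeding this into the definition of $c_i$ then finishes the argument: each entry of $c_i(\alpha|\beta,\alpha'|\beta')$ counts the parts equal to $i$ in one block, and since the parts in an $\alpha$-block are literally parts of $\alpha$ and those in a $\beta$-block literally parts of $\beta$, these counts agree entrywise with $c_i(\alpha,\alpha')$ on the $\alpha$-blocks and with $c_i(\beta,\beta')$ on the $\beta$-blocks. As the $\alpha$-blocks precede the $\beta$-blocks, reading entries left to right produces $c_i(\alpha,\alpha')$ followed by $c_i(\beta,\beta')$, which is the claim; taking products of part-factorials also yields $c_i(\alpha|\beta,\alpha'|\beta')! = c_i(\alpha,\alpha')!\,c_i(\beta,\beta')!$ for free. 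I do not expect a genuine obstacle here: the entire content is that concatenation forces a part boundary of $\alpha'|\beta'$ at $n = |\alpha|$, so no block can merge a part of $\alpha$ with a part of $\beta$. The only point needing care is the index bookkeeping, since $\len(\alpha') \leq \len(\alpha)$ in general, so one should note that the junction is simultaneously the $\len(\alpha')$-th block boundary of $\alpha'|\beta'$ and the partial sum after the $\len(\alpha)$-th part of $\alpha|\beta$.
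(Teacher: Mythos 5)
Your proof is correct. The paper itself offers no argument beyond the remark that the proposition ``follows from the definition and the example,'' so your subset-encoding argument is simply a fully written-out version of that intended justification: the essential point, exactly as you identify, is that $n=|\alpha|$ is a forced cut point of $\alpha'|\beta'$, so no block of the grouping can straddle the junction. You also (correctly, and silently) repair the typographical slips in the statement, which should read $c_i(\alpha|\beta,\alpha'|\beta')=c_i(\alpha,\alpha')|c_i(\beta,\beta')$.
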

This proposition follows from the definition and the example.

Let $\mathcal{D}$ be the subposet of $\mathcal{P}$ with the cover relation $\lessdot_\mathcal{D}$ given by: $\alpha\lessdot_\mathcal{D}\beta$ if $\beta$ is obtained from $\alpha$ by summing two descending adjacent parts, i.e. $\beta = (a_1,\dotsc,a_i+a_{i+1},\dotsc,a_k)$ for some $i$ satisfying $a_i\geq a_{i+1}$. Then define $C(\alpha)$ be the maximal composition of the subposet $\mathcal{D}$ containing $\alpha$. From Example 1, $C(3211311121)=763$.

Let $\mathsf{F}$ be a $\sd$ filling, then $|\mathfrak{S}_\mathsf{F}|=C_{\row(\mathsf{F})\col(\mathsf{F})}$. The number of permutations of rows with entry $i$ is $m_i(\row(\mathsf{F}))!$, then divide by the number of permutations that only permute $a_j$ and $a_k$ if they are in the same column and $a_j=a_k=i$, which is $c_i(\row(\mathsf{F}),\col(\mathsf{F}))!$. This leads to an alternate definition of a descending quasipowersum,

\begin{definition}\label{def:pwrsm}
Let $\alpha$ and $\beta$ be partitions of $n$ and 
\begin{equation}\label{eqn:coef}
C_{\alpha\beta}=\prod^m_{i=1}\frac{m_i(\alpha)!}{c_i(\alpha,\beta)!}.    
\end{equation}

Then an alternate definition of the \textit{(descending) quasisymmetric powersum} function is 
\begin{equation} 
P_\alpha = \sum_{\alpha\leq\beta\leq C(\alpha)}C_{\alpha\beta}M_\beta= \sum_{\alpha\leq_\mathcal{D}\beta}C_{\alpha\beta}M_\beta.
\end{equation}
\end{definition}

\begin{example}
\begin{align*}
P_{1211} &= \left(\frac{3!}{1!1!1!}\right)\left(\frac{1!}{1!}\right)M_{1211} +\left(\frac{3!}{1!2!}\right)\left(\frac{1!}{1!}\right)M_{122} \\
& \qquad \qquad +\left(\frac{3!}{1!1!1!}\right)\left(\frac{1!}{1!}\right)M_{131} +\left(\frac{3!}{1!2!}\right)\left(\frac{1!}{1!}\right)M_{14}\\ 
&= 6M_{1211} + 3M_{122}+6M_{131}+3M_{14}
\end{align*}
\end{example}

\subsection{Product and Coproduct} \label{sec:prodco}
Fix an $\sd$ filling $\mathsf{F}$, a \textit{deconcatenation of fillings}, denoted $\mathsf{F}=\mathsf{F_1}|\mathsf{F_2}$, is achieved by drawing a vertical line in between two columns to get two $\sd$ fillings $\mathsf{F}_1$ and $\mathsf{F}_2$. Notice that (for $\sd$ fillings) there is also a horizontal line that we could draw to deconcatenate the filling. Moreover, $\col(\mathsf{F}_1)|\col(\mathsf{F}_2)=\col(\mathsf{F})$ and $\row(\mathsf{F}_1)|\row(\mathsf{F}_2)=\row(\mathsf{F})$.
\begin{example} The deconcatenation of a filling mimics the deconcatenation of a quasimonomial.
\[
\begin{array}{c|::cccc}
\hdashline
\hdashline
2 & 2      & \gdot  & \gdot & \gdot  \\
2 & 2      & \gdot  & \gdot & \gdot  \\
1 & \gdot  & 1      & \gdot & \gdot  \\
3 & \gdot  & \gdot  & 3     & \gdot  \\
\hline
& 4 & 1 & 3 &
\end{array}
\quad
\begin{array}{c|c::ccc}
2 & 2      & \gdot  & \gdot & \gdot  \\
2 & 2      & \gdot  & \gdot & \gdot  \\
\hdashline
\hdashline
1 & \gdot  & 1      & \gdot & \gdot  \\
3 & \gdot  & \gdot  & 3     & \gdot  \\
\hline
& 4 & 1 & 3 &
\end{array}
\quad
\begin{array}{c|cc::cc}
2 & 2      & \gdot  & \gdot & \gdot  \\
2 & 2      & \gdot  & \gdot & \gdot  \\
1 & \gdot  & 1      & \gdot & \gdot  \\
\hdashline
\hdashline
3 & \gdot  & \gdot  & 3     & \gdot  \\
\hline
& 4 & 1 & 3 &
\end{array}
\quad
\begin{array}{c|ccc::c}
2 & 2      & \gdot  & \gdot & \gdot  \\
2 & 2      & \gdot  & \gdot & \gdot  \\
1 & \gdot  & 1      & \gdot & \gdot  \\
3 & \gdot  & \gdot  & 3     & \gdot  \\
\hdashline
\hdashline
\hline
& 4 & 1 & 3 &
\end{array}
\]
Likewise, $\Delta(M_{413})=1 \otimes M_{413} + M_{4}\otimes M_{13} + M_{41}\otimes M_{3} + M_{413}\otimes 1$. 
\end{example}

Evidently the coproduct of a quasimonomial function can be expressed as 
\begin{equation} \label{eqn:Mcoprod}
    \Delta(M_{\col(\mathsf{F})})=\sum_{\mathsf{F}_1|\mathsf{F}_2=\mathsf{F}}M_{\col(\mathsf{F}_1)}\otimes M_{\col(\mathsf{F}_2)},
\end{equation}
where $\mathsf{F}$ is an $\sd$ filling.

\begin{theorem} \label{thm:coprod}
Let $\gamma$ be a composition of $n$, then a quasisymmetric powersum has a deconcatenate coproduct:
\begin{equation} \label{eq:coprod}
\Delta(P_\gamma)=\sum_{\alpha|\beta=\gamma}\prod_{i=1}^n\frac{m_i(\gamma)!}{m_i(\alpha)!m_i(\beta)!}P_\alpha\otimes P_\beta.
\end{equation}
\end{theorem}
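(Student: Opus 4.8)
The plan is to work entirely in the quasimonomial basis, using the second definition of $P_\gamma$ from Definition~\ref{def:pwrsm}, namely $P_\gamma = \sum_{\gamma \leq_{\mathcal D} \delta} C_{\gamma\delta} M_\delta$, together with the deconcatenate coproduct of the monomials, $\Delta(M_\delta) = \sum_{\delta_1|\delta_2 = \delta} M_{\delta_1} \otimes M_{\delta_2}$. Applying $\Delta$ to the expansion of $P_\gamma$ gives
\[
\Delta(P_\gamma) = \sum_{\gamma \leq_{\mathcal D} \delta}\ \sum_{\delta_1|\delta_2 = \delta} C_{\gamma\delta}\, M_{\delta_1} \otimes M_{\delta_2}.
\]
First I would reindex this double sum. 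A pair $(\delta, \delta_1|\delta_2)$ with $\gamma \leq_{\mathcal D} \delta$ corresponds, via the deconcatenation of $\sd$ fillings described in Section~\ref{sec:prodco}, to a choice of a splitting point: one shows that the coarsening $\gamma \lessdot_{\mathcal D} \cdots \lessdot_{\mathcal D} \delta$ together with the cut into $\delta_1 | \delta_2$ is the same data as a decomposition $\gamma = \alpha | \beta$ (the row reading split at the corresponding point) together with coarsenings $\alpha \leq_{\mathcal D} \delta_1$ and $\beta \leq_{\mathcal D} \delta_2$ \emph{independently}. The key point here is that in the poset $\mathcal D$ one only ever sums \emph{adjacent} parts, so a cut of $\delta$ that is compatible with $\gamma$ never merges a part coming from the $\alpha$-side with one from the $\beta$-side; this is exactly the horizontal/vertical line remark for $\sd$ fillings.

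Granting that reindexing, the right side becomes
\[
\sum_{\alpha|\beta = \gamma}\ \sum_{\alpha \leq_{\mathcal D} \delta_1}\ \sum_{\beta \leq_{\mathcal D} \delta_2} C_{\gamma,\,\delta_1|\delta_2}\; M_{\delta_1} \otimes M_{\delta_2},
\]
so the whole theorem reduces to the \emph{coefficient identity}
\[
C_{\gamma,\,\delta_1|\delta_2} \;=\; \left(\prod_{i=1}^n \frac{m_i(\gamma)!}{m_i(\alpha)!\,m_i(\beta)!}\right) C_{\alpha\delta_1}\, C_{\beta\delta_2}
\]
whenever $\alpha|\beta = \gamma$. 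This is where Proposition~\ref{prp:coefcat} does the work: from $c_i(\alpha|\beta, \delta_1|\delta_2) = c_i(\alpha,\delta_1)|c_i(\beta,\delta_2)$ we get $c_i(\gamma, \delta_1|\delta_2)! = c_i(\alpha,\delta_1)!\, c_i(\beta,\delta_2)!$, since the factorial-of-parts product is multiplicative under concatenation. Combined with $m_i(\gamma) = m_i(\alpha) + m_i(\beta)$, plugging into the definition $C_{\alpha\beta} = \prod_i m_i(\alpha)!/c_i(\alpha,\beta)!$ from \eqref{eqn:coef} and multiplying and dividing by $m_i(\alpha)!\,m_i(\beta)!$ yields precisely the claimed multinomial coefficient $\prod_i \binom{m_i(\gamma)}{m_i(\alpha)}$ times $C_{\alpha\delta_1}C_{\beta\delta_2}$. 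Comparing with the target coproduct formula, whose coefficient is exactly $\prod_i m_i(\gamma)!/(m_i(\alpha)!\,m_i(\beta)!) = \prod_i \binom{m_i(\gamma)}{m_i(\alpha)}$, finishes the proof.

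The main obstacle I anticipate is not the algebra — that is a short manipulation once Proposition~\ref{prp:coefcat} is invoked — but making the reindexing in the first step airtight: one must verify that every pair $(\delta_1,\delta_2)$ with $\alpha \leq_{\mathcal D}\delta_1$, $\beta\leq_{\mathcal D}\delta_2$ actually arises from a single $\delta$ with $\gamma\leq_{\mathcal D}\delta$ split at the prescribed point, and that this correspondence is a bijection. The cleanest way is probably to argue directly on $\sd$ fillings: a deconcatenation $\mathsf F = \mathsf F_1|\mathsf F_2$ of an $\sd$ filling with $\row(\mathsf F) = \gamma$ splits the row reading as $\alpha|\beta$ and the column reading as $\delta_1|\delta_2$, and conversely any $\mathsf F_1 \in \sd(\alpha)$, $\mathsf F_2 \in \sd(\beta)$ glue to an $\sd$ filling of $\gamma$ by block-stacking (using that $a_1$ sits in the upper-left of each block and the diagonal-descending conditions are local). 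This should be stated as a lemma, or folded into the discussion of \eqref{eqn:Mcoprod}, and then the theorem follows by summing \eqref{eqn:Mcoprod} against the $C_{\gamma\delta}$ and collecting terms as above.
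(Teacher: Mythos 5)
Your proposal is correct and follows essentially the same route as the paper: expand $P_\gamma$ in the monomial basis, reindex the deconcatenation sum over cuts $\gamma=\alpha|\beta$ (which the paper also justifies via $\sd$ fillings, exactly as you suggest in your final paragraph), and reduce the coefficient ratio using Proposition~\ref{prp:coefcat}. Your coefficient identity $C_{\gamma,\delta_1|\delta_2}=\bigl(\prod_i \tbinom{m_i(\gamma)}{m_i(\alpha)}\bigr)C_{\alpha\delta_1}C_{\beta\delta_2}$ is the same computation as the paper's simplification of $|\mathfrak{S}_{\mathsf{F}_1|\mathsf{F}_2}|/(|\mathfrak{S}_{\mathsf{F}_1}||\mathfrak{S}_{\mathsf{F}_2}|)$, since $|\mathfrak{S}_{\mathsf{F}}|=C_{\row(\mathsf{F})\col(\mathsf{F})}$.
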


\begin{proof}
By \eqref{eqn:PtoMSD} and \eqref{eqn:Mcoprod}
\begin{equation*}
    \Delta(P_\gamma)=\Delta\left(\sum_{\mathsf{F}\in\sd(\gamma)}|\mathfrak{S}_\mathsf{F}|M_{\col(\mathsf{F})}\right)=\sum_{\mathsf{F}\in\sd(\gamma)}\sum_{\mathsf{F}_1|\mathsf{F}_2=\mathsf{F}}|\mathfrak{S}_\mathsf{F}|M_{\col(\mathsf{F}_1)}\otimes M_{\col(\mathsf{F}_2)}.
\end{equation*}

Let $m$ be the degree of $\col(\mathsf{F}_1)$, then fix $m$. Let $\mathsf{F}_1'$ be the diagonal filling with $\row(\mathsf{F}_1) = \row(\mathsf{F}_1')$, say $\row(\mathsf{F}_1')=(\gamma_1,\ldots,\gamma_j)$. Then every filling of degree $m$ comes from moving entry $\gamma_{i+1}$ to the column of $\gamma_i$ when $\gamma_i\geq\gamma_{i+1}$ for certain choices of $i$. In other words, the fillings of degree $m$ is precisely $\sd(\gamma_1,\ldots,\gamma_j)$. The same is said for the right side, and thus 

\begin{equation*}
    \sum_{\mathsf{F}\in\sd(\gamma)}\sum_{\mathsf{F}_1|\mathsf{F}_2=\mathsf{F}}|\mathfrak{S}_\mathsf{F}|M_{\col(\mathsf{F}_1)}\otimes M_{\col(\mathsf{F}_2)}=\sum_{\alpha|\beta=\gamma}\sum_{\substack{\mathsf{F}_1\in\sd(\alpha)\\\mathsf{F}_2\in\sd(\beta)}}\left|\mathfrak{S}_{\mathsf{F}_1|\mathsf{F}_2}\right|M_{\col(\mathsf{F}_1)}\otimes M_{\col(\mathsf{F}_2)}.
\end{equation*}
Finally to express this in the quasipowersum basis we divide the coefficient by $|\mathfrak{S}_{\mathsf{F}_1}||\mathfrak{S}_{\mathsf{F}_2}|$. We simplify the coefficient through \eqref{eqn:coef} and Proposition \ref{prp:coefcat}
\begin{equation}
    \frac{|\mathfrak{S}_{\mathsf{F}_1|\mathsf{F}_2}|}{|\mathfrak{S}_{\mathsf{F}_1}||\mathfrak{S}_{\mathsf{F}_2}|}=\prod_{i=1}^n\frac{\frac{m_i(\gamma)!}{c_i(\gamma,\col(\mathsf{F}_1|\mathsf{F}_2) )!}}{\frac{m_i(\alpha)!}{c_i(\alpha,\col(\mathsf{F}_1))!}\frac{m_i(\beta)!}{c_i(\beta,\col(\mathsf{F}_2))!}} =\prod_{i=1}^n\frac{m_i(\gamma)!}{m_i(\alpha)!m_i(\beta)!}
\end{equation}
The coefficient is interpreted as all the permutations of $\mathfrak{S}_\mathsf{F}$ that permuted a row in $\mathsf{F}_1$ to a row in $\mathsf{F}_2$.
\end{proof}

The \textit{shuffle}, denoted as $\shuffle$, of two compositions $\alpha = (a_1,\ldots,a_j)$ and $\beta = (b_1,\ldots,b_k)$ is the multiset of compositions defined recursively by 
\begin{enumerate}
    \item $\emptyset\shuffle \alpha = \alpha \shuffle \emptyset = \{\alpha\}$
    \item $\alpha\shuffle\beta = a_1|\big((a_2,\ldots,a_j)\shuffle \beta\big) + b_1|\big( \alpha\shuffle (b_2,\ldots,b_k) \big)$,  
\end{enumerate}
using $+$ for disjoint (multiset) union. If we add to the recursion condition 2) the term $(a_1+b_1)|\big( (a_2,\ldots,a_j) \shuffle (b_2,\ldots,b_k) \big)$, then this is called the \textit{quasi-shuffle} and is denoted as $\qshuf$. Recall that the product rule of the quasimonomial basis is a quasishuffle product:
\[
M_\alpha M_\beta = \sum_{\gamma \in \alpha \qshuf \beta}M_\gamma.
\]

Likewise we define the quasishuffle of fillings $\mathsf{F_1} = (a_1,\ldots,a_j)$ and $\mathsf{F_2}= (b_1,\ldots,b_k)$, where $a_i$ and $b_i$ are the columns of $\mathsf{F_1}$ and $\mathsf{F_2}$ respectively, is defined recursively
\begin{enumerate}
    \item $\emptyset\shuffle \mathsf{F_1} = \mathsf{F_1} \shuffle \emptyset = \{\mathsf{F_1}\}$ where $\emptyset$ is the empty filling
    \item $\mathsf{F_1}\shuffle\mathsf{F_2} = a_1|\big((a_2,\ldots,a_j)\shuffle \mathsf{F_2}\big) + b_1|\big( \mathsf{F_1}\shuffle (b_2,\ldots,b_k) \big) + (a_1+b_1)|\big( (a_2,\ldots,a_j) \shuffle (b_2,\ldots,b_k) \big)$  
\end{enumerate}
where: concatenating columns $a_1$ and $a_2$ means starting $a_2$ southeast of $a_i$; and addition $a_1+a_2$ builds one column by sort combining the two columns.

\begin{example}
The following illustrates that $M_{23}M_1=M_{231}+M_{213}+M_{24}+M_{123}+M_{33}$.
\[
\begin{array}{c|cc}
2 & 2      & \gdot \\
1 & \gdot  & 1     \\
\hline
& 2 & 1
\end{array}
\,
\qshuf
\,
\begin{array}{c|c}
3 & 3 \\
\hline
  & 3 
\end{array}
=
\begin{array}{c|c}
2 & 2  \\
\hline
  & 2 
\end{array}
\Bigg|\left(
\begin{array}{c|c}
1 & 1  \\
\hline
  & 1 
\end{array}
\:
\qshuf
\:
\begin{array}{c|c}
3 & 3 \\
\hline
  & 3 
\end{array}
\right)
+ \,
\begin{array}{c|c}
3 & 3 \\
\hline
  & 3 
\end{array}
\Bigg| \left(
\begin{array}{c|cc}
2 & 2      & \gdot \\
1 & \gdot  & 1     \\
\hline
& 2 & 1
\end{array} 
\: \qshuf \: \emptyset
\right)
+
\begin{array}{c|cc}
3 & 3  \\
2 & 2  \\
\hline
  & 5
\end{array}
\Bigg| \left(
\begin{array}{c|c}
1 & 1  \\
\hline
  & 1 
\end{array}
\: \qshuf  \: \emptyset
\right)
\]
\[
=
\begin{array}{c|cc}
2 & 2 & \gdot \\
1 & \gdot & 1  \\
\hline
  & 2 &1
\end{array}
\Bigg|\left(
\emptyset \: \qshuf \:
\begin{array}{c|c}
3 & 3 \\
\hline
  & 3 
\end{array}
\right) +
\begin{array}{c|cc}
2 & 2 & \gdot \\
3 & \gdot & 3  \\
\hline
  & 2 &3
\end{array}
\Bigg| \left(
\begin{array}{c|c}
1 & 1  \\
\hline
  & 1 
\end{array}
\: \qshuf  \: \emptyset
\right)
+ 
\begin{array}{c|cc}
2 & 2 & \gdot \\
3 & \gdot & 3  \\
1 & \gdot & 1 \\
\hline
  & 2 & 4
\end{array}
+
\begin{array}{c|ccc}
3 & 3 & \gdot & \gdot \\
2 & \gdot & 2 & \gdot \\
1 & \gdot & \gdot & 1 \\
\hline
  & 3 & 2 & 1
\end{array}
+
\begin{array}{c|cc}
3 & 3 & \gdot   \\
2 & 2 & \gdot \\
1 & \gdot & 1 \\
\hline
  & 5 & 1
\end{array}
\]
\[
=
\begin{array}{c|ccc}
2 & 2 & \gdot & \gdot \\
1 & \gdot & 1 & \gdot  \\
3 & \gdot & \gdot & 3 \\
\hline
  & 2 & 1 & 3
\end{array}
+ 
\begin{array}{c|ccc}
2 & 2 & \gdot & \gdot \\
3 & \gdot & 3 & \gdot \\
1 & \gdot & \gdot & 1 \\
\hline
  & 2 & 3 & 1
\end{array}
+ 
\begin{array}{c|cc}
2 & 2 & \gdot \\
3 & \gdot & 3  \\
1 & \gdot & 1 \\
\hline
  & 2 & 4
\end{array}
+
\begin{array}{c|ccc}
3 & 3 & \gdot & \gdot \\
2 & \gdot & 2 & \gdot \\
1 & \gdot & \gdot & 1 \\
\hline
  & 3 & 2 & 1
\end{array}
+
\begin{array}{c|cc}
3 & 3 & \gdot   \\
2 & 2 & \gdot \\
1 & \gdot & 1 \\
\hline
  & 5 & 1
\end{array}
\]
\end{example}
Observe from the last example that if $\mathsf{F}\in\mathsf{F}_1\qshuf\mathsf{F}_2$, then $\row(\mathsf{F})\in\row(\mathsf{F}_1)\shuffle\row(\mathsf{F}_2)$ and $\col(\mathsf{F})\in\col(\mathsf{F}_1)\qshuf\col(\mathsf{F}_2)$.

\begin{theorem}\label{thm:Qprod}
Let $\alpha$ and $\beta$ be compositions of $n$, then the quasipowersums have a shuffle product, in other words

\[
P_\alpha P_\beta = \sum_{\gamma \in \alpha\shuffle\beta} \prod_i \frac{m_i(\alpha)!m_i(\beta)!}{m_i(\gamma)!} P_\gamma.
\]
\end{theorem}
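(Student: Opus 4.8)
The plan is to follow the template of the proof of Theorem~\ref{thm:coprod}: expand $P_\alpha$ and $P_\beta$ into the monomial basis by \eqref{eqn:PtoMSD}, perform the quasi-shuffle product of monomials at the level of fillings, and then reorganize the sum. Since $\mathsf{F}\mapsto\col(\mathsf{F})$ carries the multiset $\mathsf{F}_1\qshuf\mathsf{F}_2$ bijectively onto $\col(\mathsf{F}_1)\qshuf\col(\mathsf{F}_2)$ (the recursion defining the quasi-shuffle of fillings is, columnwise, the one defining the quasi-shuffle of compositions), and since every filling in $\mathsf{F}_1\qshuf\mathsf{F}_2$ is again an $\sd$ filling with row reading in $\row(\mathsf{F}_1)\shuffle\row(\mathsf{F}_2)$ (which one checks from the recursive definition, cf.\ the observation preceding the statement), this gives
\[
P_\alpha P_\beta \;=\; \sum_{\substack{\mathsf{F}_1\in\sd(\alpha)\\\mathsf{F}_2\in\sd(\beta)}}|\mathfrak{S}_{\mathsf{F}_1}||\mathfrak{S}_{\mathsf{F}_2}|\sum_{\mathsf{F}\in\mathsf{F}_1\qshuf\mathsf{F}_2}M_{\col(\mathsf{F})}
\;=\;\sum_{\mathsf{F}}\bigg(\sum_{\substack{\mathsf{F}_1\in\sd(\alpha)\\\mathsf{F}_2\in\sd(\beta)}}|\mathfrak{S}_{\mathsf{F}_1}||\mathfrak{S}_{\mathsf{F}_2}|\,m(\mathsf{F};\mathsf{F}_1,\mathsf{F}_2)\bigg)M_{\col(\mathsf{F})},
\]
where the outer sum ranges over $\sd$ fillings and $m(\mathsf{F};\mathsf{F}_1,\mathsf{F}_2)$ is the multiplicity of $\mathsf{F}$ in $\mathsf{F}_1\qshuf\mathsf{F}_2$. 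Writing $\mu(\gamma)$ for the multiplicity of $\gamma$ in the multiset $\alpha\shuffle\beta$ (equivalently, the number of $S\subseteq[\ell(\gamma)]$ with $\gamma|_S=\alpha$ and $\gamma|_{S^c}=\beta$), the right-hand side of the theorem is $\sum_\gamma\mu(\gamma)\prod_i\frac{m_i(\alpha)!m_i(\beta)!}{m_i(\gamma)!}P_\gamma$, which I would likewise expand by \eqref{eqn:PtoMSD}. Comparing coefficients of each $M_{\col(\mathsf{F})}$, it suffices to prove that for every $\sd$ filling $\mathsf{F}$ with $\row(\mathsf{F})=\gamma$,
\begin{equation}\tag{$\star$}
\sum_{\substack{\mathsf{F}_1\in\sd(\alpha)\\\mathsf{F}_2\in\sd(\beta)}}|\mathfrak{S}_{\mathsf{F}_1}||\mathfrak{S}_{\mathsf{F}_2}|\,m(\mathsf{F};\mathsf{F}_1,\mathsf{F}_2)\;=\;\mu(\gamma)\cdot\prod_i\frac{m_i(\alpha)!\,m_i(\beta)!}{m_i(\gamma)!}\cdot|\mathfrak{S}_{\mathsf{F}}|.
\end{equation}

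The next step is to use \eqref{eqn:coef} to write $|\mathfrak{S}_{\mathsf{F}}|=\prod_i m_i(\gamma)!/c_i(\gamma,\col(\mathsf{F}))!$ and analogously for $\mathsf{F}_1$ and $\mathsf{F}_2$; this cancels all the factors $\prod_i m_i(\cdot)!$ and recasts $(\star)$ as
\[
\sum_{\substack{\mathsf{F}_1\in\sd(\alpha)\\\mathsf{F}_2\in\sd(\beta)}}\bigg(\prod_i\frac{c_i(\gamma,\col(\mathsf{F}))!}{c_i(\alpha,\col(\mathsf{F}_1))!\;c_i(\beta,\col(\mathsf{F}_2))!}\bigg)\,m(\mathsf{F};\mathsf{F}_1,\mathsf{F}_2)\;=\;\mu(\gamma).
\]
A realization of $\mathsf{F}$ inside $\mathsf{F}_1\qshuf\mathsf{F}_2$ records, for each column of $\mathsf{F}$, whether it came from $\mathsf{F}_1$, from $\mathsf{F}_2$, or as the sort-merge of an $\mathsf{F}_1$-column with an $\mathsf{F}_2$-column. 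Directly from the definition of the compositions $c_i$ (compare Proposition~\ref{prp:coefcat}), splitting each column of $\mathsf{F}$ into its $\mathsf{F}_1$- and $\mathsf{F}_2$-parts exhibits $c_i(\gamma,\col(\mathsf{F}))$ as an interleaving of $c_i(\alpha,\col(\mathsf{F}_1))$ and $c_i(\beta,\col(\mathsf{F}_2))$ with the parts at merged columns added; hence $\prod_i c_i(\gamma,\col(\mathsf{F}))!\big/\big(c_i(\alpha,\col(\mathsf{F}_1))!\,c_i(\beta,\col(\mathsf{F}_2))!\big)$ counts exactly the choices, within each merged column and each part size $i$, of which of the equal entries are deemed to come from $\mathsf{F}_1$. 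Thus a single summand $\big(\prod_i(\cdots)\big)m(\mathsf{F};\mathsf{F}_1,\mathsf{F}_2)$ counts pairs (realization, such a choice), and such a pair is nothing but a $2$-coloring of the rows of $\mathsf{F}$ (color a row $\mathsf{F}_1$ or $\mathsf{F}_2$ according to the source of its single entry), from which $\mathsf{F}_1$, $\mathsf{F}_2$ and the realization are all recovered by restricting $\mathsf{F}$ to the two color classes and collapsing empty columns.

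Consequently $(\star)$ reduces to the statement that the $2$-colorings obtained this way are precisely the \emph{shuffle colorings} of $\mathsf{F}$, meaning those whose $\mathsf{F}_1$-colored rows spell $\alpha$ and whose $\mathsf{F}_2$-colored rows spell $\beta$ from top to bottom — and there are $\mu(\gamma)$ of these by definition. One inclusion is clear, since interleaving and merging columns never reorders rows within $\mathsf{F}_1$ or within $\mathsf{F}_2$; the other inclusion is the part I expect to be the main obstacle: one must check that restricting $\mathsf{F}$ to the rows of one color class of a shuffle coloring and collapsing empty columns again yields an $\sd$ filling, and that the induced pattern of column types is a legal quasi-shuffle path. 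This is exactly where the two structural features of $\sd$ fillings must be invoked — that the entries of any column weakly decrease downward and that consecutive rows lie in equal or adjacent columns (Definitions~\ref{def:DD} and~\ref{def:strdia}). These force a color class, after collapsing, to satisfy Conditions~1--3 of Definition~\ref{def:DD} (in particular, two consecutive equally colored rows with the smaller part on top cannot share a column of $\mathsf{F}$, which is what preserves clause~3(b)), and they force the columns assigned to $\mathsf{F}_1$, respectively $\mathsf{F}_2$, to appear in their original relative order within $\mathsf{F}$. Granting this verification, the coloring recipe is the required bijection, $(\star)$ holds, and summing over all $\sd$ fillings $\mathsf{F}$ — folding $\mu(\gamma)$ back into a sum over the multiset $\alpha\shuffle\beta$ — yields the product formula.
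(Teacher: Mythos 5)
Your proposal is correct and follows essentially the same route as the paper: both expand $P_\alpha P_\beta$ into monomials via $\sd$ fillings and the quasi-shuffle of fillings, and both track the same ratio of $c_i$-factorials (your $\prod_i c_i(\gamma,\col(\mathsf{F}))!/\bigl(c_i(\alpha,\col(\mathsf{F}_1))!\,c_i(\beta,\col(\mathsf{F}_2))!\bigr)$ is exactly the reciprocal of the correction factor $O_{\mathsf{F}}$ in the paper's proof). The only difference is one of packaging: where the paper cancels the coefficient $O_{\mathsf{F}}\,|\mathfrak{S}_{\mathsf{F}_1}||\mathfrak{S}_{\mathsf{F}_2}|/|\mathfrak{S}_{\mathsf{F}}|$ algebraically column by column, you prove the equivalent counting identity $(\star)$ by a bijection with row $2$-colorings, and the $\sd$-restriction verification you flag as the remaining obstacle is genuine but routine (columns of an $\sd$ filling occupy contiguous rows with weakly decreasing entries) and is treated no more carefully in the paper's own converse construction.
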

\begin{proof}
From \eqref{eqn:PtoMSD}

\begin{align*}
    P_\alpha  P_\beta &= \left( \sum_{\mathsf{F}_1\in\sd(\alpha)}|\mathfrak{S}_{\mathsf{F}_1}| M_{\col(\mathsf{F}_1)} \right)  \left( \sum_{\mathsf{F}_2\in\sd(\beta)}|\mathfrak{S}_{\mathsf{F}_2}| M_{\col(\mathsf{F}_2)} \right) \\
    &= \sum_{\substack{\mathsf{F}_1\in \sd(\alpha),\mathsf{F}_2\in\sd(\beta)\\\mathsf{F}\in\mathsf{F}_1\qshuf\mathsf{F}_2}} |\mathfrak{S}_{\mathsf{F}_1}||\mathfrak{S}_{\mathsf{F}_2}| M_{\col(F)} .
\end{align*}

In order to express the above in the powersum basis, we have to switch the sums, i.e. use $\sd(\alpha\shuffle\beta)=\bigcup_{\gamma\in\alpha\shuffle\beta}\sd(\gamma)$ instead of $\sd(\alpha)\qshuf\sd(\beta)$. We will first show that these two are equivalent as sets, then show that multiplying by $O_\mathsf{F}$ (to be defined) will result in them being equal as multisets. Finally, we will reduce the coefficient.

Notice that any filling in $\sd(\alpha)\qshuf\sd(\beta)$ is also in $\sd(\alpha\shuffle\beta)$ due to the fact that every filling $\mathsf{F}\in \mathsf{F}_1\qshuf \mathsf{F}_2$ has the property that $\row(\mathsf{F})\in\row(\mathsf{F}_1)\shuffle\row(\mathsf{F}_2)$. Conversely, we can also construct any filling $\mathsf{F}\in\sd(\alpha\shuffle\beta)$ as a quasishuffle of two fillings $\mathsf{F}_1$ and $\mathsf{F}_2$ of $\sd(\alpha)$ and $\sd(\beta)$. Fix a filling $\mathsf{F}$, then if the first row is from $\alpha$, make a new filling $\mathsf{F}_1'$ where the first row is $\mathsf{F}_1'$ is the first row of $\mathsf{F}$ (and likewise for $\beta$) and remove the first row from $\mathsf{F}$. Continue to do this for all row to get two fillings $\mathsf{F}_1$ and $\mathsf{F}_2$ from $\sd(\alpha)$ and $\sd(\beta)$. Thus $\sd(\alpha)\qshuf\sd(\beta)$ and $\sd(\alpha\shuffle\beta)$ are equivalent as sets, but not as multisets.

In order to switch the sum from  $\sd(\alpha)\qshuf\sd(\beta)$ to $\sd(\alpha\shuffle\beta)$, we must consider the double count that comes from the quasishuffle, which will be denoted as $O_{\mathsf{F}}$. We illustrate this with an example

\begin{example}
Let $\mathsf{F}_1\in\sd(2,1,1,1)$ and $\mathsf{F}_2\in\sd(1,1,3)$ be the fillings shown in Figure \ref{fig:fillings}. One of the resulting fillings is when we combine the second column of $\mathsf{F}_1$ and the first column of $\mathsf{F}_2$. Notice that there is only one way to make this filling. However if we consider the number of ways that this filling is generated by $\sd((2,1,1,1)\shuffle(1,1,3))$, then that would be $\binom{5}{3}$.

\begin{figure}[h]
    \centering
    \[ \footnotesize
    \begin{array}{c|cc}
    2 & 2 & \gdot \\
    1 & \gdot & 1   \\
    1 & \gdot & 1 \\
    1 & \gdot & 1 \\
    \hline
    & 2 & 3
    \end{array}
    \quad
    \qshuf
    \quad 
    \begin{array}{c|cc}
    1 &  1 & \gdot  \\
    1 &  1 & \gdot \\
    3 & \gdot & 3 \\
    \hline
    & 2 & 3
    \end{array}
    \quad
    \ni
    \quad
    \begin{array}{c|ccc}
    2 & 2 & \gdot & \gdot \\
    1 & \gdot & 1 & \gdot  \\
    1 & \gdot & 1 & \gdot \\
    1 & \gdot & 1 & \gdot \\
    1 & \gdot & 1 & \gdot  \\
    1 & \gdot & 1 & \gdot \\
    3 & \gdot & \gdot & 3 \\
    \hline
    & 2 & 5 & 3
    \end{array}
    \]
    \caption{The quasishuffle of fillings}
    \label{fig:fillings}
\end{figure}
\end{example}

Let $\alpha = (a_1,\ldots,a_l)$, $\beta = (b_1,\ldots,b_{l'})$, and fix $\mathsf{F}\in\sd(\alpha\shuffle\beta)$, $\mathsf{F}_1\in\sd(\alpha)$, and $\mathsf{F}_2\in\sd(\beta)$ such that $\mathsf{F}\in \mathsf{F}_1\qshuf\mathsf{F}_2$. Then we define the partitions $\alpha_j$ and $\beta_{j'}$ to be the entries of column $j$ of $\mathsf{F}$ and column $j'$ of $\mathsf{F}_2$ respectively. Then it is easy to see that $\col(\mathsf{F}_1)=(|\alpha_1|,\ldots,|\alpha_k|)$ and $\alpha = \alpha_1|\cdots|\alpha_k$. Let $J$ be a column of $\mathsf{F}$ such that the combination of columns $j$ and $j'$ from fillings $\mathsf{F}_1$ and $\mathsf{F}_2$ results in the column $J$. The number of ways build the column $J$ of $\mathsf{F}$ from the shuffle of $\alpha_j$ and $\beta_{j'}$ is $\binom{m_i(\alpha_j)+m_i(\beta_{j'})}{m_i(\alpha_j)}$. Then we define $O_\mathsf{F}$ as the reciprocal of all such ways to build $J$ for all columns with such $J$. To switch the sum as described above, we multiply $O_{\mathsf{F}}$ such that for any two columns that combine in result of the quasishuffle, say $\alpha_j$ and $\beta_{j'}$, then 
\[
O_{\mathsf{F}} =\prod_{i}\frac{1}{\prod_{j,j'}\binom{m_i(\alpha_j)+m_i(\beta_{j'})}{m_i(\alpha_j)}}
\]
where the product runs over all $j$, $j'$ where $\alpha_j$ is combined with $\beta_{j'}$. From the example above, $O_\mathsf{F}= 1/\binom{3+2}{3}$.

Finally the coefficient that is obtained from changing the expression into the powersum basis is as follows
\[ 
O_{\mathsf{F}}\frac{|\mathfrak{S}_{\mathsf{F}_1}||\mathfrak{S}_{\mathsf{F}_2}|}{|\mathfrak{S}_\mathsf{F}|}
=\prod_{i=1}^n \frac{1}{\prod_{j,j'} \binom{m_i(\alpha_j)+m_i(\beta_{j'})}{m_i(\alpha_j)}} 
\frac{c_i(\gamma,\col(\mathsf{F}_1\qshuf\mathsf{F}_2) )!}{c_i(\alpha,\col(\mathsf{F}_1))!c_i(\beta,\col(\mathsf{F}_2))!}
\left[\frac{m_i(\alpha)!m_i(\beta)!}{m_i(\gamma)!}\right]
\]
It suffices to show that this expression reduces to what is inside the bracket. By Proposition \ref{prp:coefcat} we can write $c_i((a_1,\ldots,a_l),(|\alpha_1|,\ldots,|\alpha_k|))!=c_i((a_1,\ldots,a_{i_1}),|\alpha_1|)! \cdots c_i((a_{i_{k-1}+1},\ldots,a_{l}),|\alpha_k|)!$, and similarly for $\beta$. Consequently everything outside of the bracket can be considered column by column, while what is inside the bracket comes from the whole of a filling. Since $\mathsf{F}$ is a quasishuffle of $\mathsf{F}_1$ and $\mathsf{F}_2$ there are three ways a column of $\mathsf{F}$ is made.

First consider the case when a column $a_j$ isn't combined. Fix $i$, then $O_\mathsf{F}$ doesn't contribute to the coefficient which leaves us with 
\[
\frac{c_i((a_{i_{j-1}+1},\ldots,a_j),|\alpha_j|)!}{c_i((a_{i_{j-1}+1},\ldots,a_j),|\alpha_j|)!}
\]
which is obviously 1. The same argument is made when $\beta_{j'}$ isn't combined with any other column.

The third case is when two columns $\alpha_j$ and $\beta_j'$ combine. Fix $i$, and we get the expression
\[
\frac{1}{\binom{m_i(\alpha_j)+m_i(\beta_{j'})}{m_i(\beta_{j'})}}\frac{c_i((a_{i_{j-1}+1},\ldots,a_j)\shuffle(b_{i_{{j'}-1}+1},\ldots,b_{j'}),|\alpha_j|+|\beta_{j'}|)!}{c_i((a_{i_{j-1}+1},\ldots,a_j),|\alpha_j|)!c_i((b_{i_{{j'}-1}+1},\ldots,b_{j'}),|\beta_{j'}|)!}
\]

Recall that the formula of $c_i$ counts the number of $i$'s in the composition $(a_{i_{j-1}+1},\ldots,a_j)|(b_{i_{{j'}-1}+1},\ldots,b_{j'})$, which is, in this case, the same as the function $m_i$. Thus using this information and writing out the choose function yields
\[
\frac{m_i(\alpha_{j})!(m_i(\alpha_j)+m_i(\beta_{j'})-m_i(\alpha_j))!}{(m_i(\alpha_j)+m_i(\beta_{j'}))!}\frac{m_i((a_{i_{j-1}+1},\ldots,a_j)\shuffle(b_{i_{{j'}-1}+1},\ldots,b_{j'}))!}{m_i((a_{i_{j-1}+1},\ldots,a_j))!m_i((b_{i_{{j'}-1}+1},\ldots,b_{j'})))!}.
\]
Recall that $\alpha_j$ are the entries of column $j$ and $(a_{i_{j-1}+1},\ldots,a_j)$ are also the entries of column $j$, thus the expression is 1, which completes the proof.
\end{proof}

We with hold a formula for antipode for now.

\begin{remark}
Theorems \ref{thm:coprod} and \ref{thm:Qprod} can be proven (perhaps more straightforwardly) without using fillings. However, our chosen approach, reveals a Hopf algebra structure on fillings. We have defined a product and a coproduct that satisfies the conditions for being a Hopf algebra and because this is graded connected, there exists an antipode.
\end{remark}

\section{The Scaled Quasi-Powersum Basis and its Dual}\label{sec:scapwr}
\subsection{Scaled Quasi-Powersum Basis}
We will now look at a scaled version of this basis that will end up being the dual of the Zassenhauss basis and will have a nice product and coproduct. Recall the scalar
\[
z_\lambda = \prod_ii^{m_i(\lambda)}m_i(\lambda)!.
\]
We extend this definition to compositions by $z_\alpha=z_{\sort(\alpha)}$. Symmetric powersums have the property that they form an orthogonal basis for $\sym$, to be more exact, $\langle p_\lambda,p_\mu\rangle=z_\lambda\delta_{\lambda\mu}$ where $\langle,\rangle$ is the Hall inner product. Thus the \textit{scaled symmetric powersums} is defined as $\widetilde{p}_\lambda=z_\lambda^{-1} p_\lambda$, so that $\langle \widetilde{p}_\lambda,p_\mu\rangle=\delta_{\lambda\mu}$. Though $\qsym$ is not self-dual, we define scaled quasipowersums in the same way.

\begin{definition}
A Scaled Quasi-Powersum functions are defined as 
\begin{equation} \label{def:scaP}
\tilde P_{\alpha} = \frac{1}{z_\alpha} P_{\alpha}. 
\end{equation}

\end{definition}

By multiplying both sides of \eqref{eqn:refine} by $z_\lambda^{-1}$ it is evident that the scaled quasipowersum functions refines $\widetilde{p}_\lambda$. However with this new scaling there is a nicer product and coproduct.

\begin{corollary}
The scaled quasipowersum functions has a deconcatenate coproduct and a shuffle product, in other words
\[
\Delta(\widetilde{P}_\gamma)=\sum_{\alpha|\beta=\gamma}\widetilde{P}_\alpha\otimes \widetilde{P}_\beta
\qquad \text{and} \qquad \widetilde{P}_\alpha\otimes\widetilde{P}_\beta=\sum_{\gamma\in \alpha\shuffle\beta}\widetilde{P}_\gamma.
\]
\end{corollary}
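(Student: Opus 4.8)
The plan is to derive both identities directly from Theorems~\ref{thm:coprod} and \ref{thm:Qprod} by substituting $P_\alpha = z_\alpha \widetilde{P}_\alpha$ and checking that the multinomial coefficients appearing there are exactly cancelled by the ratio of $z$-values. The key arithmetic fact I would isolate first is that, for any concatenation $\gamma = \alpha|\beta$ (equivalently, for any $\gamma$ appearing in $\alpha\shuffle\beta$, since shuffling only rearranges parts and hence $m_i(\gamma)=m_i(\alpha)+m_i(\beta)$ in all cases), one has
\[
\frac{z_\gamma}{z_\alpha z_\beta}=\prod_i \frac{i^{m_i(\gamma)}m_i(\gamma)!}{i^{m_i(\alpha)}m_i(\alpha)!\, i^{m_i(\beta)}m_i(\beta)!}=\prod_i \frac{m_i(\gamma)!}{m_i(\alpha)!\,m_i(\beta)!},
\]
where the powers of $i$ cancel because $m_i(\gamma)=m_i(\alpha)+m_i(\beta)$. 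This single computation supplies the reciprocal of the coefficient in the coproduct formula and the coefficient itself in the product formula.

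For the coproduct, I would start from Theorem~\ref{thm:coprod}, write $P_\gamma = z_\gamma \widetilde P_\gamma$ and $P_\alpha\otimes P_\beta = z_\alpha z_\beta\,\widetilde P_\alpha\otimes\widetilde P_\beta$, divide through by $z_\gamma$, and observe that the surviving scalar on the term indexed by $\alpha|\beta=\gamma$ is
\[
\frac{z_\alpha z_\beta}{z_\gamma}\prod_{i=1}^n\frac{m_i(\gamma)!}{m_i(\alpha)!m_i(\beta)!}=1
\]
by the boxed identity. For the product, I would start from Theorem~\ref{thm:Qprod}, substitute $\widetilde P_\alpha = z_\alpha^{-1}P_\alpha$ and $\widetilde P_\beta = z_\beta^{-1}P_\beta$ on the left, and on the right replace $P_\gamma$ by $z_\gamma\widetilde P_\gamma$; the coefficient of $\widetilde P_\gamma$ becomes
\[
\frac{z_\gamma}{z_\alpha z_\beta}\prod_i\frac{m_i(\alpha)!m_i(\beta)!}{m_i(\gamma)!}=1,
\]
again by the boxed identity (valid for every $\gamma\in\alpha\shuffle\beta$ since $\sort(\gamma)$ has the same part multiplicities as the multiset union of the parts of $\alpha$ and $\beta$).

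The only thing requiring care — and the single place I would be careful to state explicitly — is the justification that $m_i(\gamma)=m_i(\alpha)+m_i(\beta)$ in both settings, so that the powers of $i$ in $z_\gamma$ really do cancel against those in $z_\alpha z_\beta$. For concatenation this is immediate, and for the shuffle it holds because every $\gamma\in\alpha\shuffle\beta$ is obtained by interleaving the parts of $\alpha$ and $\beta$ without altering any part, so $\sort(\gamma)=\sort(\alpha|\beta)$ and hence $z_\gamma=z_{\alpha|\beta}$. Beyond this observation the corollary is a one-line substitution in each case, so I do not expect any real obstacle; the content is entirely in Theorems~\ref{thm:coprod} and \ref{thm:Qprod}, and this scaling is precisely the normalization that makes those coefficients disappear.
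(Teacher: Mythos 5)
Your proposal is correct and follows essentially the same route as the paper: substitute $P_\alpha = z_\alpha\widetilde{P}_\alpha$ into Theorems~\ref{thm:coprod} and \ref{thm:Qprod} and observe that the multinomial coefficients cancel against $z_\gamma/(z_\alpha z_\beta)$ because $m_i(\gamma)=m_i(\alpha)+m_i(\beta)$, so the powers of $i$ cancel. Your explicit remark that this multiplicity identity also holds for every $\gamma\in\alpha\shuffle\beta$ (not just concatenations) is a detail the paper leaves implicit, but it is the same argument.
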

\begin{proof}
Substituting \eqref{def:scaP} into \eqref{eq:coprod} yields 
\[
\Delta(z_{\gamma}\widetilde{P}_\gamma)=\sum_{\alpha|\beta=\gamma}\prod_{i=1}^n\frac{m_i(\gamma)!}{m_i(\alpha)!m_i(\beta)!}z_\alpha \widetilde{P}_\alpha\otimes z_\beta \widetilde{P}_\beta.
\]
The coefficient reduces to
\[
\prod_{i=1}^n\frac{m_i(\gamma)!}{m_i(\alpha)!m_i(\beta)!}\frac{z_\alpha z_\beta}{z_\gamma} =\prod_{i=1}^n\frac{i^{m_i(\alpha)+m_i(\beta)}}{i^{m_i(\gamma)}}=1.
\]
Likewise we do the same for the coefficient of the shuffle product.
\end{proof}

Finally, we look at the antipode of QSym, denoted $S$. Since $\widetilde{P}$ has shuffle product and deconcatenate coproduct, $\widetilde{P}$ makes QSym a shuffle algebra. By Theorem 3.1 of \cite{benedetti2017antipodes} the antipode acting on $\widetilde{P}$ is defined as:

\begin{corollary}\label{cor:antipode}
The antipode of QSym acts on a quasisymmetric powersum function as
\begin{equation}
    S(\widetilde{P}_\alpha)=(-1)^{\len(\alpha)}\widetilde{P}_{ \overleftarrow{\alpha}}
\end{equation}
where $\overleftarrow{\alpha}$ is the reverse composition.

\end{corollary}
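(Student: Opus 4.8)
The plan is to apply Theorem 3.1 of \cite{benedetti2017antipodes}, which gives the antipode of a shuffle algebra (with shuffle product and deconcatenate coproduct) on its natural generators. Since the preceding corollary establishes that $\widetilde P$ makes $\qsym$ into a shuffle algebra — the scaled quasipowersums multiply by the plain shuffle $\widetilde P_\alpha \widetilde P_\beta = \sum_{\gamma\in\alpha\shuffle\beta}\widetilde P_\gamma$ and deconcatenate as $\Delta(\widetilde P_\gamma) = \sum_{\alpha|\beta=\gamma}\widetilde P_\alpha\otimes\widetilde P_\beta$ — the $\widetilde P_\alpha$ indexed by compositions $\alpha$ are precisely the shuffle-algebra words, with the length-one compositions $(n)$ playing the role of the alphabet. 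The cited antipode formula then reads $S(\widetilde P_{(a_1,\dots,a_k)}) = (-1)^k \widetilde P_{(a_k,\dots,a_1)}$, which is exactly the claimed $(-1)^{\len(\alpha)}\widetilde P_{\overleftarrow\alpha}$.

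First I would recall the statement being invoked: in a graded connected Hopf algebra that is free as an algebra on the shuffle of a set of primitive-free generators $\{y_w\}$ indexed by words, with deconcatenation coproduct, the antipode acts on a word $w = y_{w_1}\cdots y_{w_k}$ by $S(y_w) = (-1)^k y_{w_k}\cdots y_{w_1}$. Second, I would match our setup to these hypotheses: the scaled quasipowersums are indexed by compositions, concatenation of compositions corresponds to the word product, the coproduct is literally deconcatenation by the corollary above, and the product is the plain shuffle — so $\qsym$ with the $\widetilde P$ basis is the shuffle Hopf algebra on the "letters" $\{\widetilde P_{(n)} : n\geq 1\}$. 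Third, I would simply read off the formula and translate $y_{w_k}\cdots y_{w_1}$ back into the reverse composition $\overleftarrow\alpha$ and $k$ into $\len(\alpha)$.

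Alternatively, if one prefers a self-contained argument, one can verify the formula directly by induction on $\len(\alpha)$ using the defining relation $\sum_{(\alpha)} S(\widetilde P_{\alpha_{(1)}})\widetilde P_{\alpha_{(2)}} = \varepsilon(\widetilde P_\alpha)1$: writing $\Delta(\widetilde P_\alpha) = \sum_{\beta|\gamma = \alpha}\widetilde P_\beta\otimes\widetilde P_\gamma$, the recursion becomes $S(\widetilde P_\alpha) = -\sum_{\beta|\gamma=\alpha,\ \gamma\neq\emptyset} S(\widetilde P_\beta)\widetilde P_\gamma$, and substituting the inductive guess $S(\widetilde P_\beta) = (-1)^{\len(\beta)}\widetilde P_{\overleftarrow\beta}$ together with the shuffle product reduces the claim to a standard cancellation identity for shuffles of words (the same identity that proves the antipode formula in the free shuffle algebra).

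The main obstacle is essentially bookkeeping rather than mathematical depth: one must check that our conventions — the direction of concatenation, which side of $\Delta$ carries the "first" factor, and the orientation of $\overleftarrow\alpha$ — line up exactly with those in \cite{benedetti2017antipodes}, since an off-by-one in the reversal convention or a swap of the two tensor factors would flip the formula. Once the dictionary is fixed, the result is immediate from the cited theorem.
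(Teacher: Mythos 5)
Your proposal is correct and takes essentially the same route as the paper: the paper's entire argument is the observation that $\widetilde{P}$ gives $\qsym$ a shuffle product and deconcatenation coproduct, followed by a direct citation of Theorem 3.1 of \cite{benedetti2017antipodes} to read off $S(\widetilde{P}_\alpha)=(-1)^{\len(\alpha)}\widetilde{P}_{\overleftarrow{\alpha}}$. Your additional inductive verification and the caution about matching conventions go beyond what the paper records, but the core argument is identical.
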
 
Moreover the previous Corollary implies that $S(P_\alpha)=(-1)^{\len(\alpha)}P_{\overleftarrow{\alpha}}$.

\subsection{Zassenhaus Basis} \label{ssec:Zbasis}
The dual of the quasimonomial basis is the complete basis, denoted $\mathbf{S}_\alpha$. The complete basis and the Zassenhaus basis, denoted $\mathbf{Z}_\alpha$, are both multiplicative and in \cite{krob1997noncommutative} the Zassenhaus basis is defined as 
\begin{equation} \label{eqn:expZ}
\sum_{k}t^k\mathbf{S}_k=\ldots\exp{\frac{\mathbf{Z}_i}{i}t^i}\ldots\exp{\frac{\mathbf{Z}_2}{2}t^2}\exp{\frac{\mathbf{Z}_1}{1}t^1}
\end{equation}

It would be in our interest to rewrite this definition using posets. Thus expand \eqref{eqn:expZ} as
\begin{align*}
    \sum_{k}t^k\mathbf{S}_k&=\cdots\left(1+\frac{\mathbf{Z}_i}{i}t^i+\frac{\left(\frac{\mathbf{Z}_i}{i}t^i\right)^2}{2!}+\cdots \right)\cdots\\ 
    &\qquad\qquad\qquad\ldots\left(1+\frac{\mathbf{Z}_2}{2}t^2+\frac{\left(\frac{\mathbf{Z}_2}{2}t^2\right)^2}{2!}+\cdots \right)\left(1+\frac{\mathbf{Z}_1}{1}t^n+\frac{\left(\frac{\mathbf{Z}_1}{1}t^1\right)^2}{2!}+\cdots \right).
\end{align*}

In the change of basis from $\mathbf{S}_k$ to the Zassenhaus basis every term $\beta$ has the property that $\beta\leq_\mathcal{D}(k)$. For a fixed $\beta$, all parts of size $i$ come from the factor $\exp{\frac{\mathbf{Z}_i}{i}t^i}$, Pick a term $j$ in $\exp{\frac{\mathbf{Z}_i}{i}t^i}$, then the coefficient of $Z_{i^j}t^j$ in $\exp{\frac{\mathbf{Z}_i}{i}t^i}$ is $\frac{1}{i^jj!}$. 
Note that $i^j=i^{m_i(\beta)}$ and that $j!$ is the number of parts with size $i$ that is next to each other, and since $\beta$ is a composition of $k$, $j!=c_i(\beta,k)!$. Then \eqref{eqn:expZ} is written as
\[
\mathbf{S}_k = \sum_{\beta\leq_\mathcal{D}(k)}\prod_i\frac{1}{i^{m_i(\beta)}c_i(\beta,k)!}\mathbf{Z}_\beta
\]
Then by Proposition \ref{prp:coefcat} we get an alternate definition.
\begin{equation} \label{eqn:StoZ}
    \mathbf{S}_\alpha=\sum_{\beta\leq_\mathcal{D}\alpha}\prod_{i}\frac{1}{i^{m_i(\beta)}c_i(\beta,\alpha)!}\mathbf{Z}_\beta
\end{equation}

\begin{theorem} \label{thm:dual}
The dual of the scaled quasipowersum basis is the Zassenhaus basis.
\end{theorem}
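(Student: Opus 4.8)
The plan is to verify the duality directly by pairing the scaled quasipowersum basis $\widetilde P_\alpha$ of $\qsym$ against the Zassenhaus basis $\mathbf Z_\beta$ of $\nsym$ and showing the pairing matrix is the identity. Since $\{M_\alpha\}$ and $\{\mathbf S_\alpha\}$ are dual bases by definition, we have $\langle M_\alpha, \mathbf S_\beta\rangle = \delta_{\alpha\beta}$, and all computations reduce to bookkeeping with the two triangular change-of-basis formulas already in hand: the expansion $P_\alpha = \sum_{\alpha\le_\mathcal D\gamma} C_{\alpha\gamma} M_\gamma$ from Definition \ref{def:pwrsm}, and the expansion $\mathbf S_\alpha = \sum_{\beta\le_\mathcal D\alpha}\prod_i \frac{1}{i^{m_i(\beta)}c_i(\beta,\alpha)!}\mathbf Z_\beta$ from \eqref{eqn:StoZ}.

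First I would invert \eqref{eqn:StoZ} — or, more efficiently, avoid inverting it by instead computing $\langle \widetilde P_\alpha, \mathbf S_\gamma\rangle$ for all $\gamma$ and comparing with $\langle \widetilde P_\alpha, \mathbf S_\gamma\rangle$ obtained from expanding $\mathbf S_\gamma$ in the $\mathbf Z$ basis. Concretely: write $\widetilde P_\alpha = z_\alpha^{-1}\sum_{\alpha\le_\mathcal D\delta} C_{\alpha\delta} M_\delta$. Then $\langle \widetilde P_\alpha, \mathbf S_\gamma\rangle = z_\alpha^{-1} C_{\alpha\gamma}$ whenever $\alpha\le_\mathcal D\gamma$ and is $0$ otherwise, using $\langle M_\delta,\mathbf S_\gamma\rangle=\delta_{\delta\gamma}$. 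On the other hand, if we postulate $\langle \widetilde P_\alpha,\mathbf Z_\beta\rangle = \delta_{\alpha\beta}$, then expanding via \eqref{eqn:StoZ} gives $\langle \widetilde P_\alpha,\mathbf S_\gamma\rangle = \prod_i \frac{1}{i^{m_i(\alpha)} c_i(\alpha,\gamma)!}$ whenever $\alpha\le_\mathcal D\gamma$. So the theorem is equivalent to the single identity
\[
z_\alpha^{-1} C_{\alpha\gamma} \;=\; \prod_i \frac{1}{i^{m_i(\alpha)}\,c_i(\alpha,\gamma)!} \qquad\text{for all }\alpha\le_\mathcal D\gamma,
\]
together with the observation that both sides vanish off the interval, which matches the supports.

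The remaining step is the elementary verification of this scalar identity. Unfolding the definitions, $z_\alpha = \prod_i i^{m_i(\alpha)} m_i(\alpha)!$ and $C_{\alpha\gamma} = \prod_i \frac{m_i(\alpha)!}{c_i(\alpha,\gamma)!}$, so
\[
z_\alpha^{-1} C_{\alpha\gamma} = \prod_i \frac{1}{i^{m_i(\alpha)} m_i(\alpha)!}\cdot\frac{m_i(\alpha)!}{c_i(\alpha,\gamma)!} = \prod_i \frac{1}{i^{m_i(\alpha)}\,c_i(\alpha,\gamma)!},
\]
which is exactly the right-hand side. Finally I would note that this equality of coefficients on the common support interval $[\alpha, C(\alpha)]_{\mathcal D}$, combined with the fact that $\{\widetilde P_\alpha\}$ and $\{\mathbf Z_\beta\}$ are genuine bases (triangularity against $\{M\}$ and $\{\mathbf S\}$ respectively), forces $\langle \widetilde P_\alpha,\mathbf Z_\beta\rangle = \delta_{\alpha\beta}$ by a triangular-inversion argument: the pairing matrix between $\{\widetilde P\}$ and $\{\mathbf S\}$ equals, entrywise, the pairing matrix one gets by declaring $\{\widetilde P\}$ and $\{\mathbf Z\}$ dual and expanding $\mathbf S$ via \eqref{eqn:StoZ}, and since \eqref{eqn:StoZ} is unitriangular it is invertible, so the two dual-basis claims are equivalent. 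The main obstacle is essentially organizational — making sure the supports of the two triangular expansions genuinely coincide (both are governed by the poset $\le_\mathcal D$, which is why this works) and that the $c_i(\alpha,\gamma)!$ appearing in $C_{\alpha\gamma}$ is literally the same quantity as the $j! = c_i(\beta,k)!$ extracted from the exponential in \eqref{eqn:StoZ}; once those identifications are pinned down, the computation is a one-line cancellation.
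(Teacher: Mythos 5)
Your proof is correct and takes essentially the same route as the paper's: expand $\widetilde P_\alpha$ in the $M$ basis, observe that the resulting coefficients $z_\alpha^{-1}C_{\alpha\gamma}=\prod_i \frac{1}{i^{m_i(\alpha)}c_i(\alpha,\gamma)!}$ coincide with the transition coefficients of \eqref{eqn:StoZ}, and conclude from $\langle M_\delta,\mathbf{S}_\gamma\rangle=\delta_{\delta\gamma}$. You merely spell out the unitriangular-inversion step the paper leaves implicit (and, incidentally, your version of the coefficient has the arguments of $m_i$ and $c_i$ in the correct order, which the paper's displayed equation does not).
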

\begin{proof}
Combining \eqref{def:pwrsm} and \eqref{def:scaP} yields
\begin{equation} 
\widetilde{P}_\alpha = \sum_{\alpha\leq_\mathcal{D}\beta}\prod_i\frac{1}{i^{m_i(\beta)}c_i(\beta,\alpha)!}M_\beta.
\end{equation}
Comparing this equation and \eqref{eqn:StoZ}, and recalling that $\langle \mathbf{S}_\alpha,M_\beta\rangle=\delta_{\alpha\beta}$, it is apparent that $\langle \mathbf{Z}_\alpha,\widetilde{P}_\beta\rangle=\delta_{\alpha\beta}$.
\end{proof}

\section{Change of Basis} \label{sec:CoB}
Recall that a ribbon is a skew ferrers diagram with no $2\times2$ boxes; for this paper we will allow ribbons to be disconnected, however adjacent boxes at least share a corner (if not an edge). There is a natural ribbon to associate to a composition, which we denote $R(\alpha)$:
\[
\alpha = (2,3,1,1,2)
\longleftrightarrow
\ydiagram{2,1+3,3+1,3+1,3+2}
\]

Let $R$ be a ribbon and $j$ a positive integer, then $R-j$ removes the first $j$ boxes. Let $\alpha=(a_1,\ldots,a_k)$ and $\beta=(b_1,\ldots,b_l)$ be compositions of $n$. We build the tuple of ribbons $D(\beta,\alpha)=(R_1,R_2,\ldots,R_n)$ from the following algorithm, where $R_k$ is nonempty if and only if some $a_i = k$. 

\begin{algorithm}
\caption{Descent Ribbons}
\begin{algorithmic}[1]
\State Initialize $\hgt = 0$, $R = R(\beta)$, $D(\beta,\alpha) = (\emptyset,\ldots,\emptyset)$
\For{$i$ in $[1,\ldots,k]$}:
\Comment{Extend ribbon $R_{a_i}$ at it's bottom right end by adding a box in one of three ways}
\If{$a_{i-1}\neq a_i$ \textbf{or} $a_i = 1$}
\State Extend $R_{a_i}$ by adding a box in the southeast position
\EndIf
\If{$a_{i-1}=a_i$ \textbf{and} there exists a $j$ such that $R(b_j,\ldots,b_l) = R$}
\State Extend $R_{a_i}$ by adding a box in the southeast position
\EndIf
\If{$a_{i-1}=a_i$ \textbf{and} there doesn't exist a $j$ such that $R(b_j,\ldots,b_l) = R$}
\State Extend $R_{a_i}$ by adding a box in the east position
\EndIf
\State $\hgt = \hgt - 1 + \hgt(a_i')$ where $a_i'$ has $a_i$ boxes and $a_i'+ A = R$ for some $A$ 
\State $R = R-a_i$ 
\EndFor
\end{algorithmic}
\end{algorithm}

\begin{example}\label{ex:tuprib}
Let $\alpha = (1,2,1,1)$ and $\beta=(1,1,3)$. We find $D(\beta,\alpha)$ using the algorithm. In a given step, the black has been removed from $R$ and the $\bullet$ are about to be removed from $R$. Then to find $D(\beta,\alpha)$:

\begin{align*}
    R:\quad& \ydiagram{1,1,3} & & \ydiagram[\bullet]{1}*{1,1,3} & & \ydiagram[*(black)]{1}*[\bullet]{0,1,1}*{1,1,3} & & &\ydiagram[*(black)]{1,1,1}*[\bullet]{0,0,1+1}*{1,1,3}& & & &\ydiagram[*(black)]{1,1,2}*[\bullet]{0,0,2+1}&\\
    D(\alpha,\beta):\, & (\emptyset,\emptyset) &\longrightarrow& \left(\ydiagram{1},\emptyset \right) &\longrightarrow& \left(\ydiagram{1},\ydiagram{1} \right)&\longrightarrow& &\left(\ydiagram{1,1+1},\ydiagram{1} \right)& &\longrightarrow& &\left(\ydiagram{1,1+2},\ydiagram{1} \right)&\\
    \hgt:\quad& \hgt=0 & & \hgt=0 & & \hgt=1 & & &\hgt=1& & & &\hgt=1& 
\end{align*}
\end{example}

We denote the height in the algorithm as $\hgt(\beta,\alpha)$. 

\begin{definition}
Let $\alpha$ be a composition of $n$. A filling of the ribbon of $\alpha$ is a \textit{Standard Descent Ribbon} if the Ribbon filling is standard (i.e. the numbers 1 to $n$ each appear once) and increasing from left to right, and decreasing from top to bottom.
\end{definition}

Denote the set of all standard descent ribbon fillings of $D(\alpha,\beta)$ as $\sdr(\beta,\alpha)$. Finally, let $\alpha = (a_1,\ldots,a_k)$ be a composition of $n$, then break $\alpha$ at its ascents such that $\alpha=\gamma_1|\cdots|\gamma_l$ where $\gamma_i=(a_{j_1},\ldots,a_{j_1+j_2})$ where $a_{j_1-1}<a_{j_1}\geq\ldots\geq a_{j_1+j_2}<a_{j_1+j_2+1}$. Then $T(\alpha)=\gamma_1'|\cdots|\gamma_l'$.
For example, $I(1,2,1,1)=(1,1,2,1)$ and $C(1,2,1,1)=(1,4)$. Note that, in subset notation, $\alpha=C(\alpha)\cup D(\alpha)$, and $T(\alpha)=C(\alpha) \cup D(\alpha)^C$.

\begin{theorem}\label{thm:MNrule}
Let $\alpha$ be a composition, then 
\begin{equation}
    P_\alpha = \sum_{T(\alpha)\leq\beta\leq C(\alpha)}(-1)^{\hgt(\beta,\alpha)}|\sdr(\beta,\alpha)|F_\beta.
\end{equation}
\end{theorem}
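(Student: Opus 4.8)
The plan is to change both sides into the monomial basis $M_\beta$ and compare coefficients. The left side is already known: by Definition~\ref{def:pwrsm}, $P_\alpha = \sum_{\alpha \le_\mathcal{D} \gamma} C_{\alpha\gamma} M_\gamma$, so I need to show that the right-hand expression, once each $F_\beta$ is expanded as $F_\beta = \sum_{\delta \le \beta} M_\delta$, produces exactly the coefficient $C_{\alpha\gamma}$ (and zero off the $\le_\mathcal{D}$ interval). Concretely, fixing a composition $\gamma$, I must verify
\[
C_{\alpha\gamma}\cdot[\alpha \le_\mathcal{D} \gamma] \;=\; \sum_{\substack{T(\alpha)\le \beta \le C(\alpha)\\ \beta \le \gamma}} (-1)^{\hgt(\beta,\alpha)}\,|\sdr(\beta,\alpha)|.
\]
The key observation is that the interval $[T(\alpha), C(\alpha)]$ in the refinement poset $\mathcal{P}$ is a Boolean lattice: in subset notation $T(\alpha) = C(\alpha)\cup D(\alpha)^C$ and $C(\alpha)$ is $C(\alpha)$, so the $\beta$'s in this interval are indexed by subsets of the descent set $D(\alpha)$ of $\alpha$. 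This is precisely the combinatorial shape of an inclusion–exclusion / Murnaghan–Nakayama sign cancellation, so I expect the signs $(-1)^{\hgt(\beta,\alpha)}$ to be exactly what makes most terms cancel.

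First I would pin down the meaning of the ribbon tuple $D(\beta,\alpha)$ and the count $|\sdr(\beta,\alpha)|$: a standard descent ribbon filling is a standard filling of the (disconnected) ribbon tuple that is row-increasing and column-decreasing, so $|\sdr(\beta,\alpha)|$ is a product over the components $R_1,\dots,R_n$ of the number of standard fillings of each $R_k$ consistent with the linear-extension constraints — equivalently a multinomial-type count coming from interleaving the parts of size $k$. I would show that when $\beta = C(\alpha)$ (the maximal element, corresponding to the full descent set), the ribbon $D(C(\alpha),\alpha)$ is the "strict diagonal" shape and $\hgt(C(\alpha),\alpha)=0$, so that term contributes $|\sdr(C(\alpha),\alpha)| = \prod_i m_i(\alpha)!/\bigl(\text{something}\bigr)$; more generally, for each $\beta$ in the interval, grouping parts of $\alpha$ according to which adjacent descents have been "merged" in passing from $T(\alpha)$ to $\beta$ gives $|\sdr(\beta,\alpha)|$ as a product of factorials over blocks, and $\hgt(\beta,\alpha)$ counts (with sign) how many ascent-to-descent switches were made. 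This matches the height bookkeeping in Algorithm~1 and Example~\ref{ex:tuprib}.

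The heart of the argument is then a generating-function / sign-reversing-involution computation on the Boolean lattice $[T(\alpha),C(\alpha)]$. For a fixed target $\gamma \ge \alpha$, the condition $\beta \le \gamma$ restricts to a sublattice (an "order ideal" cut out by which merges $\gamma$ forces or forbids), and within each independent block the alternating sum $\sum (-1)^{\hgt}\,|\sdr|$ telescopes: for a block of $r$ equal consecutive parts of size $i$ that $\gamma$ glues into one part, one gets an identity of the shape $\sum_{S} (-1)^{|S|}\binom{\cdots}{\cdots} = $ (the appropriate $c_i$-factorial ratio), which is exactly formula~\eqref{eqn:coef} for $C_{\alpha\gamma}$ after using Proposition~\ref{prp:coefcat} to split everything column-by-column (equivalently block-by-block). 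Off the interval $\alpha \le_\mathcal{D} \gamma$ — i.e. when $\gamma$ merges a genuine ascent of $\alpha$ — the same alternating sum has a free sign-reversing pair (toggle membership of the forced-ascent-merge in $S$) and collapses to $0$, giving the triangularity.

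The main obstacle I anticipate is bookkeeping, not conceptual depth: precisely matching $\hgt(\beta,\alpha)$ as defined by the east-vs-southeast choices in Algorithm~1 with the sign $(-1)^{|S|}$ of the corresponding subset of descents, and confirming that $|\sdr(\beta,\alpha)|$ factors across blocks in exactly the way that lines up with the product $\prod_i m_i(\alpha)!/c_i(\cdot)!$. Once the dictionary \{subset of $D(\alpha)$\} $\leftrightarrow$ \{$\beta$ in the interval\} $\leftrightarrow$ \{ribbon tuple with its height and filling count\} is nailed down, the remaining identity is the standard binomial inclusion–exclusion already implicit in the passage from $\mathbf{S}_\alpha$ to $\mathbf{Z}_\alpha$ in~\eqref{eqn:StoZ}, dualized; in fact one could alternatively deduce the theorem by pairing $\widetilde P_\alpha$ against the Zassenhaus-to-ribbon expansion in $\nsym$ using Theorem~\ref{thm:dual}, and I would mention that as a shortcut if the direct computation gets unwieldy.
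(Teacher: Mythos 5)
Your overall strategy --- expand both sides into the monomial basis and verify an alternating-sum identity for each coefficient --- is a legitimately different route from the paper's. The paper does not work in $\qsym$ directly at all: it lifts to $\ncqsym$, writes $P_\alpha=\sum_{\sigma\in\mathfrak{S}_\Phi}\rho(P_{\sigma(\Phi)})$ (Theorem \ref{thm:toqsym}), expands each $\rho(P_{\sigma(\Phi)})$ in the fundamental basis with M\"obius-function coefficients over the interval $[\rho_T(\sigma(\Phi)),\rho_C(\sigma(\Phi))]$ (Theorem \ref{thm:pwrsmim}), shows via Lemma \ref{lem:djsets} that these intervals are disjoint, and then identifies the number of $\sigma$ producing a given interval with $|\sdr(\beta,\alpha)|$ and the M\"obius sign with $(-1)^{\hgt(\beta,\alpha)}$. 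In that approach the cancellation you are trying to engineer by hand has already happened inside Theorem \ref{thm:pwrsmim}, and the count $|\sdr(\beta,\alpha)|$ appears as the size of an orbit of block-permutations rather than as something that must be made to telescope.

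The gap in your version is that the central identity
\[
C_{\alpha\gamma}\cdot[\alpha\le_{\mathcal D}\gamma]\;=\;\sum_{\substack{\beta\in[T(\alpha),C(\alpha)]\\ \gamma\le\beta}}(-1)^{\hgt(\beta,\alpha)}\,|\sdr(\beta,\alpha)|
\]
is asserted, not proven, and it is not a ``standard binomial inclusion--exclusion'': the numbers $|\sdr(\beta,\alpha)|$ are counts of linear extensions of tuples of (generally disconnected) ribbons, which vary nontrivially across the Boolean interval, so a bare sign-reversing pairing does not make the off-poset terms vanish, and the on-poset evaluation to $\prod_i m_i(\alpha)!/c_i(\alpha,\gamma)!$ requires a MacMahon-type ribbon identity that carries essentially the full content of the theorem. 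You would need to prove that identity explicitly (for instance by a recursion on the number of east-steps in Algorithm 1, or by re-deriving the linear-extension counts as orbit sizes as the paper does), and until you do, the proof is not complete. Two smaller points: the refinement condition in your display is reversed --- with the paper's convention $F_\beta=\sum_{\delta\le\beta}M_\delta$, the coefficient of $M_\gamma$ collects the $\beta$ with $\gamma\le\beta$, not $\beta\le\gamma$ --- and your proposed shortcut via Theorem \ref{thm:dual} would require the expansion of the Zassenhaus basis in the ribbon basis of $\nsym$, which is not available in the paper and is of comparable difficulty to the theorem itself.
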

The proof of this theorem is in Section \ref{sec:proof}.

\begin{example}
Continuing from Example \ref{ex:tuprib}, the coefficient of $F_{(1,1,3)}$ when $\alpha = (1,2,1,1)$ is $-3$ since the $\hgt(113,1211)=1$ and the fillings of $\sdr(113,1211)$ are
\[\footnotesize
\left(
\ytableausetup{notabloids}
\begin{ytableau}
1 \\
\none & 2 & 3 \\
\end{ytableau}
\;,\;
\begin{ytableau}
1 \\
\end{ytableau}
\right)\;,\;\left(
\ytableausetup{notabloids}
\begin{ytableau}
2 \\
\none & 1 & 3 \\
\end{ytableau}
\;,\;
\begin{ytableau}
1 \\
\end{ytableau}
\right)\;,\;\left(
\ytableausetup{notabloids}
\begin{ytableau}
3 \\
\none & 1 & 2 \\
\end{ytableau}
\;,\;
\begin{ytableau}
1 \\
\end{ytableau}
\right)
\]
Repeat for compositions in the interval $[T(\alpha),C(\alpha)]=[(1,1,2,1),(1,4)]$ to get $P_{(1,2,1,1)}=-3F_{(1,1,2,1)}-3F_{(1,1,3)}+3F_{(1,3,1)}+3F_{(1,4)}$.
\end{example}

\section{Quasipowersum Families and Involutions} \label{sec:faminv}
Let $\prec$ denote any total ordering on positive integers. Let $\mathcal{R}$ be the subposet of $\mathcal{P}$ with the cover relation $\alpha\lessdot_\prec\beta$ if $\beta$ is obtained from $\alpha$ by summing two descending adjacent parts with respect to $\preceq$, i.e. $\beta = (a_1,\dotsc,a_i+a_{i+1},\dotsc,a_k)$ \textit{only if} $a_i\succeq a_{i+1}$. From above, $\mathcal{D}$ is a special case of $\mathcal{R}$ where $\succeq$ is defined by size. We define $C^\prec(\alpha)$ just as above except with the subposet $\mathcal{R}$. Then we can define a whole family of quasisymmetric powersums as  
\begin{equation} \label{eqn:Rpwrsum}
    P^\prec_\alpha = \sum_{\alpha\leq\beta\leq C^\prec(\alpha)}C_{\alpha\beta}M_\beta= \sum_{\alpha\leq_\prec\beta}C_{\alpha\beta}M_\beta.
\end{equation}

\begin{example}
Consider the total order on positive integers $a_i\succeq_\mathcal{E} a_{i+1}$ if $a_i$ is even and $a_{i+1}$ is odd, or if $a_i$ and $a_{i+1}$ is both even or odd and $a_i\succeq a_{i+1}$. Then define the subposet on compositions $\mathcal{E}$ with the cover relation $\alpha\lessdot_\mathcal{E}\beta$ if $\beta = (a_1,\dotsc,a_i+a_{i+1},\dotsc,a_k)$ \textit{only if} $a_i\succeq_\mathcal{E} a_{i+1}$. Thus we get another type of quasipowersum
\[
P^\mathcal{E}_{23426} = 2M_{23426} + 2M_{5426} + 2M_{2366} + 2M_{566}.
\]
\end{example}

Similarly, we can adapt these quasipowersums to be generated by fillings by switching $\leq$ to $\preceq$ in Definition \ref{def:strdia}. Thus these types of quasipowersums have the property that it refines the symmetric powersums, has a shuffle product, and has a deconcatenate coproduct, and the dual up to scaling is of the form
\[
    \sum_{k}t^k\mathbf{S}_k=\ldots\exp{\frac{\mathbf{Z}_{n_i}^\prec}{n_i}t^{n_i}}\ldots\exp{\frac{\mathbf{Z}_{n_2}^\prec}{n_2}t^{n_2}}\exp{\frac{\mathbf{Z}_{n_1}^\prec}{n_1}t^{n_1}}
\]
where $N = \{\ldots,n_i,\ldots,n_2,n_1 \}$ where $n_i>_\prec n_{i-1}$ and $n_i\in \mathbb{N}$, simply by switching $\leq$ to $\preceq$ in relevant proofs. For example, $\mathbf{S}_{23426} = 2\mathbf{Z}^{\mathcal{E}}_{23426} + 2\mathbf{Z}^{\mathcal{E}}_{5426} + 2\mathbf{Z}^{\mathcal{E}}_{2366} + 2\mathbf{Z}^{\mathcal{E}}_{566}$ where $\langle P^{\mathcal{E}}_\alpha,\mathbf{Z}^{\mathcal{E}}_\beta\rangle = z_\alpha\delta_{\alpha\beta}$.

Let $\alpha = \gamma_1|\cdots|\gamma_l$ be broken at its ascents (according to $\prec$), then $T^\prec(\alpha)=\gamma_1'|\cdots|\gamma_l'$ and $C^\prec(\alpha)=|\gamma_1'||\cdots||\gamma_l'|$. Thus the change of basis to the Fundamental basis is
\begin{theorem} \label{thm:totcob}
Let $\prec$ be a total order on integers and $P^\prec$ be the powersum basis induced by $\prec$. Then
\[
P_\alpha^\prec= \sum_{I^{\prec}(\alpha)\leq\beta\leq C^\prec(\alpha)}(-1)^{\hgt(\beta,\alpha)}|\sdr(\beta,\alpha)|F_\beta.
\]
\end{theorem}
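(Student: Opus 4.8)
The plan is to reduce Theorem~\ref{thm:totcob} to Theorem~\ref{thm:MNrule} by observing that the entire change-of-basis argument never actually uses that the order $\preceq$ is the order by size; it only uses that $\preceq$ is \emph{some} total order on the part sizes appearing in $\alpha$, together with the combinatorial structure of the subposet $\mathcal{D}$ (now $\mathcal{R}$) and of the descent-ribbon construction. So the proof will be a ``transfer'' argument: go through the proof of Theorem~\ref{thm:MNrule} in Section~\ref{sec:proof} and check line by line that each ingredient has a $\preceq$-analogue. Concretely, I would first record the three facts that make the machinery run: (i) by \eqref{eqn:Rpwrsum}, $P^\prec_\alpha = \sum_{\alpha\le_\prec\beta}C_{\alpha\beta}M_\beta$, which is the verbatim $\preceq$-analogue of Definition~\ref{def:pwrsm}; (ii) the interval $[\alpha,C^\prec(\alpha)]$ in $\mathcal{R}$ consists exactly of those $\beta\ge\alpha$ obtained by merging \emph{$\preceq$-descending} adjacent runs, and $C^\prec(\alpha)=|\gamma_1'|\cdots|\gamma_l'|$ where $\alpha=\gamma_1|\cdots|\gamma_l$ is the decomposition at $\preceq$-ascents; (iii) the Descent Ribbons algorithm and the notions $\hgt(\beta,\alpha)$, $\sdr(\beta,\alpha)$ only reference ``$a_{i-1}\neq a_i$ or $a_i=1$'' and ``$a_{i-1}=a_i$'', i.e. equality of consecutive parts, not their order — so these objects are \emph{literally unchanged}; what changes is only which $\beta$ lie in the summation range.

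The core of the argument is then to expand $M_\beta = \sum_{\gamma\le\beta}(-1)^{?}F_\gamma$ via the inverse of $F_\alpha=\sum_{\beta\le\alpha}M_\beta$ (i.e. $M_\beta = \sum_{\gamma\le\beta}(-1)^{\ell(\beta)-\ell(\gamma)}F_\gamma$), substitute into (i), and interchange sums to collect the coefficient of each $F_\beta$. I would follow exactly the bookkeeping of Section~\ref{sec:proof}: the coefficient of $F_\beta$ becomes a signed, weighted count over chains/fillings, and the claim is that after the $\preceq$-substitution this count is precisely $(-1)^{\hgt(\beta,\alpha)}|\sdr(\beta,\alpha)|$ with the \emph{same} $\hgt$ and $\sdr$ as in the size-ordered case, and with summation range $[T^\prec(\alpha),C^\prec(\alpha)]$. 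The only places where ``order by size'' entered the original proof are: the definition of the poset $\mathcal{D}$ (merging is allowed iff $a_i\ge a_{i+1}$), the coefficients $C_{\alpha\beta}$ (which involve $c_i(\alpha,\beta)$, counting consecutive \emph{equal} parts, hence order-independent), and the identification of the endpoints $T(\alpha),C(\alpha)$ of the interval. Replacing $\ge$ by $\succeq$ throughout changes $\mathcal{D}\rightsquigarrow\mathcal{R}$, $C\rightsquigarrow C^\prec$, $T\rightsquigarrow T^\prec$, and leaves $C_{\alpha\beta}$, $\hgt$, $\sdr$, and all the sign cancellations intact. I would state this as a lemma (``every step in the proof of Theorem~\ref{thm:MNrule} that invokes the size order remains valid with $\succeq$ in place of $\ge$, and no other step uses the size order'') and then conclude.

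The main obstacle I anticipate is verifying point (ii) carefully — that the $\preceq$-ascent decomposition $\alpha=\gamma_1|\cdots|\gamma_l$ really does give $[\alpha, C^\prec(\alpha)]$ as the product of the intervals $[\gamma_t, |\gamma_t'|]$ inside $\mathcal{R}$, so that the subset-notation identity $\alpha = C^\prec(\alpha)\cup D^\prec(\alpha)$, $T^\prec(\alpha)=C^\prec(\alpha)\cup (D^\prec(\alpha))^C$ still holds and the summation range $[T^\prec(\alpha),C^\prec(\alpha)]$ in the theorem matches the range produced by the inclusion–exclusion. This is the analogue of the remark ``Note that, in subset notation, $\alpha=C(\alpha)\cup D(\alpha)$'' but now $D^\prec(\alpha)$ denotes the set of \emph{$\preceq$-descent positions}; one must check that merging at a $\preceq$-descent never creates or destroys other $\preceq$-descents in a way that would take $\beta$ outside the expected interval — equivalently, that within a maximal $\preceq$-weakly-decreasing run, any contiguous sub-merge is again $\preceq$-weakly-decreasing, which follows because $\preceq$ is a total order and $c_i$ only sees equalities. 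Once that structural fact is in hand, the rest is the same sign/weight cancellation as in Section~\ref{sec:proof}, applied run-by-run via Proposition~\ref{prp:coefcat}, and the proof is a short paragraph citing Theorem~\ref{thm:MNrule}'s argument mutatis mutandis.

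\begin{proof}[Proof sketch]
This is Theorem~\ref{thm:MNrule} with the partial order ``by size'' replaced throughout by the total order $\preceq$. We verify that the proof in Section~\ref{sec:proof} uses the size order only in ways that survive this replacement. By \eqref{eqn:Rpwrsum}, $P^\prec_\alpha=\sum_{\alpha\le_\prec\beta}C_{\alpha\beta}M_\beta$, the exact analogue of Definition~\ref{def:pwrsm}; here $\alpha\le_\prec\beta$ means $\beta$ is obtained from $\alpha$ by merging $\preceq$-weakly-decreasing adjacent runs, and the coefficient $C_{\alpha\beta}=\prod_i m_i(\alpha)!/c_i(\alpha,\beta)!$ is unchanged, since $c_i(\alpha,\beta)$ counts consecutive \emph{equal} parts and is therefore independent of the chosen order. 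Writing $\alpha=\gamma_1|\cdots|\gamma_l$ for the decomposition at $\preceq$-ascents, the interval $[\alpha,C^\prec(\alpha)]$ in $\mathcal{R}$ factors as the product of the intervals $[\gamma_t,|\gamma_t'|]$, because a contiguous sub-merge of a $\preceq$-weakly-decreasing run is again $\preceq$-weakly-decreasing and merges cannot cross $\preceq$-ascents; consequently $\alpha=C^\prec(\alpha)\cup D^\prec(\alpha)$ and $T^\prec(\alpha)=C^\prec(\alpha)\cup (D^\prec(\alpha))^C$ in subset notation, where $D^\prec(\alpha)$ is the set of $\preceq$-descent positions of $\alpha$. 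Next, the Descent Ribbons algorithm, the height $\hgt(\beta,\alpha)$, and the set $\sdr(\beta,\alpha)$ reference only the conditions ``$a_{i-1}\neq a_i$ or $a_i=1$'' and ``$a_{i-1}=a_i$'', i.e.\ equalities of consecutive parts; hence these objects are literally the same as in the size-ordered case. Now expand $M_\beta=\sum_{\gamma\le\beta}(-1)^{\ell(\beta)-\ell(\gamma)}F_\gamma$, substitute into the displayed formula for $P^\prec_\alpha$, and interchange sums; the coefficient of $F_\beta$ is the same signed, $C_{\alpha\beta}$-weighted count of chains as in Section~\ref{sec:proof}, but now taken over the interval $[T^\prec(\alpha),C^\prec(\alpha)]$. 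The sign and weight cancellations in that proof are carried out run-by-run via Proposition~\ref{prp:coefcat}, and each uses only equalities of parts within a run; they therefore go through verbatim and collapse the count to $(-1)^{\hgt(\beta,\alpha)}|\sdr(\beta,\alpha)|$. This yields the claimed expansion.
\end{proof}
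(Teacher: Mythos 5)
Your plan is to transfer the proof of Theorem~\ref{thm:MNrule} to the order $\preceq$ by arguing that nothing in Section~\ref{sec:proof} uses the size order in an essential way. The difficulty is that you have misdescribed what Section~\ref{sec:proof} actually does, and the step you lean on does not exist there. The paper does \emph{not} prove Theorem~\ref{thm:MNrule} by expanding $M_\gamma=\sum_{\beta\le\gamma}(-1)^{\ell(\beta)-\ell(\gamma)}F_\beta$ inside $P_\alpha=\sum_{\alpha\le_{\mathcal D}\gamma}C_{\alpha\gamma}M_\gamma$ and cancelling ``run-by-run via Proposition~\ref{prp:coefcat}.'' If you do that directly, the coefficient of $F_\beta$ is an alternating sum $\sum_\gamma(-1)^{\ell(\beta)-\ell(\gamma)}C_{\alpha\gamma}$ over a Boolean interval in which the weights $C_{\alpha\gamma}$ are \emph{not} constant, and the clean pairing argument (used in the proof of Theorem~\ref{thm:pwrsmim}, where every weight is $1$) no longer applies. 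Showing that this weighted alternating sum collapses to $(-1)^{\hgt(\beta,\alpha)}|\sdr(\beta,\alpha)|$ is essentially the whole content of the theorem, and your sketch asserts it rather than proving it. The paper avoids this exact problem by lifting to $\ncqsym$: via Theorem~\ref{thm:toqsym} it writes $P_\alpha=\sum_{\sigma\in\mathfrak{S}_\Phi}\rho(P_{\sigma(\Phi)})$, where each summand expands in the $F$ basis with \emph{unweighted} M\"obius coefficients over an interval $[\rho_T(\sigma(\Phi)),\rho_C(\sigma(\Phi))]$ (Theorem~\ref{thm:pwrsmim}); Lemma~\ref{lem:djsets} makes these intervals disjoint, and $|\sdr(\beta,\alpha)|$ then appears as the \emph{number of permutations} $\sigma$ whose interval contains $\beta$ — not as the output of a weighted cancellation.

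Consequently, the one genuinely new ingredient the paper needs for Theorem~\ref{thm:totcob} is exactly the ingredient your transfer argument omits: a total order $\rhd$ on disjoint sets that is standard-invariant and \emph{projects under $\rho$ to} $\prec$ (the paper takes $A\rhd B$ iff $|A|\prec|B|$ when sizes differ and $\min(A)<\min(B)$ otherwise). Without such an $\rhd$, the $\ncqsym$ machinery — the $\prec$-analogues of Theorems~\ref{thm:toqsym}, \ref{thm:pwrsmim} and Lemma~\ref{lem:djsets}, packaged as Theorem~\ref{thm:totpro} — cannot be invoked, and your purely $\qsym$-internal computation is left carrying the full weight of the cancellation it never performs. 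Your observations in points (ii) and (iii) (that the interval $[\alpha,C^\prec(\alpha)]$ factors over $\preceq$-runs, that $C_{\alpha\beta}$ and the ribbon algorithm depend only on equalities of parts) are correct and would be needed in any proof, but they do not close the gap. To repair the argument, either construct the projecting total order on sets and cite the lifted machinery as the paper does, or else actually carry out the weighted inclusion--exclusion in $\qsym$ and prove the resulting identity for the coefficient of $F_\beta$; the latter would be a genuinely different (and harder) proof than the one you have sketched.
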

The proof of this theorem is in Section \ref{sec:NCQtotord}.

For a subposet $\mathcal{R}$, induced by $\prec$, as defined above, we define the reverse subposet of $\mathcal{R}$, denoted as $\overleftarrow{\mathcal{R}}$, with the cover relation $\alpha\lessdot_{\overleftarrow{\prec}}\beta$ if $\beta$ is obtained from $\alpha$ by summing two ascending adjacent parts with respect to $\preceq$, i.e. $\beta = (a_1,\dotsc,a_i+a_{i+1},\dotsc,a_k)$ \textit{only if} $a_i\preceq a_{i+1}$.

\begin{example}
Let $\mathcal{A}=\overleftarrow{\mathcal{D}}$ be the reverse subposet of $\mathcal{D}$. Then define the ascending quasipowersum function as 
\[
P^\mathcal{A}_\alpha = \sum_{\alpha\leq\beta\leq C^\mathcal{A}(\alpha)}C_{\alpha\beta}M_\beta.
\]
\end{example}

Let $\overleftarrow{\alpha}$ denote the reverse composition, then it is easy to see that $\overleftarrow{C^\mathcal{D}(\alpha)}=C^\mathcal{A}(\overleftarrow{\alpha})$.

In \cite{luoto2013introduction}, the star involution ($\star$), omega involution ($\omega$). and $\psi$ involution are defined as
\begin{itemize}
    \item $(M_\alpha)^\star = M_{\overleftarrow{\alpha}}$
    \item $\omega(F_\alpha) = F_{\alpha^t}$
    \item $\psi = \star\circ \omega=\omega\circ\star$
\end{itemize}

\begin{theorem}
Let $\succ$ be a total order on integers and $P^\succ$ be the powersum basis induced by $\succ$. Then,
\begin{enumerate}
    \item $(P^\succ_\alpha)^\star 
 = P^{\overleftarrow{\succ}}_{\overleftarrow{\alpha}}$
    \item $\omega(P_{\alpha}^\succ)=\varepsilon_\alpha P_{\overleftarrow{\alpha}}^\succ$
    \item $\psi(P^\succ_\alpha)=\varepsilon_\alpha P^{\overleftarrow{\succ}}_\alpha$
\end{enumerate}
where $\varepsilon_\alpha=(-1)^{n-\len(\alpha)}$
\end{theorem}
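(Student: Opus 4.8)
The plan is to establish each of the three identities by expanding $P^\succ_\alpha$ in the monomial or fundamental basis, applying the relevant involution termwise using the formulas from \cite{luoto2013introduction}, and recognizing the result as the stated powersum. For part (1), I would start from the monomial expansion \eqref{eqn:Rpwrsum}, namely $P^\succ_\alpha = \sum_{\alpha\leq_\succ\beta}C_{\alpha\beta}M_\beta$, and apply $\star$ to get $(P^\succ_\alpha)^\star = \sum_{\alpha\leq_\succ\beta}C_{\alpha\beta}M_{\overleftarrow{\beta}}$. The key combinatorial observation is that reversal is an isomorphism from the subposet $\mathcal{R}$ (induced by $\succ$) to the reverse subposet $\overleftarrow{\mathcal{R}}$ (induced by $\overleftarrow{\succ}$): summing two $\succ$-descending adjacent parts $a_i\succeq a_{i+1}$ in $\alpha$ corresponds, after reversal, to summing two $\overleftarrow{\succ}$-descending adjacent parts in $\overleftarrow{\alpha}$. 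Hence $\alpha\leq_\succ\beta$ iff $\overleftarrow{\alpha}\leq_{\overleftarrow{\succ}}\overleftarrow{\beta}$, and one checks $C_{\alpha\beta}=C_{\overleftarrow{\alpha}\,\overleftarrow{\beta}}$ since the coefficient \eqref{eqn:coef} depends only on multiplicities $m_i$ and the Kronecker-delta sums $c_i$, both invariant under reversal of both arguments simultaneously. Reindexing $\beta'=\overleftarrow{\beta}$ then gives $(P^\succ_\alpha)^\star = \sum_{\overleftarrow{\alpha}\leq_{\overleftarrow{\succ}}\beta'}C_{\overleftarrow{\alpha}\beta'}M_{\beta'} = P^{\overleftarrow{\succ}}_{\overleftarrow{\alpha}}$.

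For part (2), I would use the fundamental expansion from Theorem \ref{thm:totcob}, $P^\succ_\alpha = \sum_{I^\succ(\alpha)\leq\beta\leq C^\succ(\alpha)}(-1)^{\hgt(\beta,\alpha)}|\sdr(\beta,\alpha)|F_\beta$, and apply $\omega$ termwise: $\omega(P^\succ_\alpha)=\sum (-1)^{\hgt(\beta,\alpha)}|\sdr(\beta,\alpha)|F_{\beta^t}$. The task is to match this against the fundamental expansion of $P^\succ_{\overleftarrow{\alpha}}$. Here I would exploit the subset-complement description: since $\beta^t$ corresponds under the standard bijection to the complement of the subset of $\beta$ in $[n-1]$, and since the interval $[I^\succ(\alpha),C^\succ(\alpha)]$ maps under $\beta\mapsto\beta^t$ to an interval determined by the ascent/descent structure of $\overleftarrow{\alpha}$, one gets a bijection between the index sets. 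The sign bookkeeping is where $\varepsilon_\alpha=(-1)^{n-\len(\alpha)}$ enters: transposing a ribbon and reversing a composition changes the height statistic $\hgt(\beta,\alpha)$ in a controlled way, and the number of standard descent ribbon fillings $|\sdr(\beta,\alpha)|$ is preserved up to the reversal correspondence. Alternatively — and this is probably cleaner — part (2) follows formally from part (1) and part (3) once any one of the three is known, since $\psi=\star\circ\omega$ and the three involutions satisfy $\omega = \star\circ\psi$; so I would aim to prove (1) and (3) directly and deduce (2).

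For part (3), since $\psi=\star\circ\omega=\omega\circ\star$, I would compose the results: applying (1) then the analogue of (2) for the order $\overleftarrow{\succ}$, or vice versa. Concretely, $\psi(P^\succ_\alpha) = \star(\omega(P^\succ_\alpha)) = \star(\varepsilon_\alpha P^\succ_{\overleftarrow{\alpha}}) = \varepsilon_\alpha (P^\succ_{\overleftarrow{\alpha}})^\star = \varepsilon_\alpha P^{\overleftarrow{\succ}}_{\alpha}$, using part (1) in the last step and noting $\varepsilon_{\overleftarrow{\alpha}}=\varepsilon_\alpha$ since $\len$ and $n$ are reversal-invariant. So the genuinely independent content is parts (1) and (2), and part (3) is a one-line consequence.

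\textbf{Main obstacle.} The routine part is (1): it is a clean reindexing once one observes the poset reversal isomorphism and the reversal-invariance of $C_{\alpha\beta}$. The real work is (2), the $\omega$ identity, because it requires understanding precisely how the tuple-of-ribbons construction $D(\beta,\alpha)$, the height statistic $\hgt(\beta,\alpha)$, and the standard descent ribbon count behave under simultaneous transposition of the composition $\beta$ and reversal of $\alpha$ — in particular verifying that the sign discrepancy is exactly $\varepsilon_\alpha=(-1)^{n-\len(\alpha)}$ and not something $\beta$-dependent. I would handle this by tracking the algorithm "Descent Ribbons" step by step: reversing $\alpha$ reverses the order in which boxes are added, swaps the roles of ascents and descents, and hence swaps "southeast" versus "east" extensions, which is exactly what transposing each ribbon $R_i$ does; the net change in $\hgt$ accumulated over all $k$ steps telescopes to $n-\len(\alpha)$ because each part $a_i$ contributes $-1 + \hgt(a_i')$ and transposition sends $\hgt(a_i') \mapsto a_i - 1 - \hgt(a_i')$. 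Summing $\sum_i (a_i-1) = n - \len(\alpha)$ gives the claimed sign. This telescoping computation is the crux and the one place I would have to be careful rather than wave hands.
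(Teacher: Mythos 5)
Your parts (1) and (3) match the paper's argument exactly: (1) is the same reindexing of the monomial expansion using $C_{\alpha\beta}=C_{\overleftarrow{\alpha}\,\overleftarrow{\beta}}$ and the reversal isomorphism between $\mathcal{R}$ and $\overleftarrow{\mathcal{R}}$, and (3) is obtained, as in the paper, by composing (1) with (2) (your observation that $\varepsilon_{\overleftarrow{\alpha}}=\varepsilon_\alpha$ is the right detail to record). You are also right to discard the ``prove (1) and (3) and deduce (2)'' variant, since your proof of (3) consumes (2).

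Part (2) is where you diverge from the paper, and where your proposal has a genuine gap. The paper's proof is two lines: since $\widetilde{P}^\succ$ has a shuffle product and deconcatenate coproduct for every total order $\succ$, Corollary \ref{cor:antipode} gives $S(P^\succ_\alpha)=(-1)^{\len(\alpha)}P^\succ_{\overleftarrow{\alpha}}$, and the standard identity $\omega(f)=(-1)^{n}S(f)$ for homogeneous $f$ of degree $n$ immediately yields $\omega(P^\succ_\alpha)=(-1)^{n-\len(\alpha)}P^\succ_{\overleftarrow{\alpha}}=\varepsilon_\alpha P^\succ_{\overleftarrow{\alpha}}$. Your route instead goes through the fundamental-basis expansion of Theorem \ref{thm:totcob}, which is proved much later via the $\ncqsym$ projection machinery, and it requires three separate verifications that you only gesture at: (i) that $\beta\mapsto\beta^t$ carries the index interval $[T^\succ(\alpha),C^\succ(\alpha)]$ onto $[T^\succ(\overleftarrow{\alpha}),C^\succ(\overleftarrow{\alpha})]$; (ii) that $|\sdr(\beta,\alpha)|=|\sdr(\beta^t,\overleftarrow{\alpha})|$; and (iii) the sign telescoping. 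Only (iii) is actually carried out, and it is correct: each segment of $a_i$ boxes has its vertical steps exchanged with its $a_i-1-(\text{vertical steps})$ horizontal steps, and summing gives $n-\len(\alpha)$. But (i) is delicate and, as you have set it up, not obviously true: ribbon transposition acts on subsets of $[n-1]$ as the \emph{reverse-complement}, not the complement as you state, and reversing $\alpha$ does not simply complement its $\succ$-descent set when equal adjacent parts occur (both $a_i\succeq a_{i+1}$ and $a_{i+1}\succeq a_i$ hold), so the endpoints $C^\succ(\alpha)^t$ and $T^\succ(\overleftarrow{\alpha})$ need not coincide under your dictionary. Verification (ii) is likewise asserted without an argument about how the tuple $D(\beta,\alpha)$ transforms. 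I would strongly recommend replacing this entire analysis with the antipode argument above; if you insist on the combinatorial route, (i) and (ii) are the crux and must be proved, not just the sign.
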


\begin{proof}

(1) By definition 
\begin{align*}
(P^\succ_\alpha)^\star &= \left(\sum_{\alpha\leq\beta\leq C^\succ(\alpha)}C_{\alpha\beta}M_\beta\right)^\star \\
\end{align*}
Note that $C_{\alpha\beta} = C_{\overleftarrow{\alpha}\overleftarrow{\beta}}$ because $m_i(\alpha)$ is the same no matter the order of the parts in $\alpha$, and $c_L(\overleftarrow{\alpha}, \overleftarrow{\beta})=\overleftarrow{c_L(\alpha,\beta)}$, but we are only interested in the product of the factorial of all the parts. Let $\overleftarrow{\succ}=\prec$, then,
\[
(P^\succ_\alpha)^\star 
= \sum_{\overleftarrow{\alpha}\leq\beta\leq C^\prec(\overleftarrow{\alpha})}C_{\overleftarrow{\alpha}\beta}M_\beta = P^\prec_{\overleftarrow{\alpha}}.
\]

(2) Recall that the alternate definition of the omega involution is $\omega(f)=(-1)S(f)$, where $f$ is a quasisymmetric function and $S$ is the antipode. Substituting $f$ with a quasisymmetric powersum function and using Theorem \ref{cor:antipode} yields the above.

(3) This follows from (1) compose (2).
\end{proof}

\begin{remark}
Recall that the quasisymmetric analogue of the forgotten basis of Sym is the Essential Basis $E_\alpha$ and is defined as $E_\alpha = \sum_{\beta\geq\alpha}M_\beta$. Since $\psi(M_\alpha)=-E_\alpha$, the change of basis to the Essential basis is 
\[
P_\alpha^\succ = \sum_{\alpha\leq_{\overleftarrow{\succ}}\beta}\varepsilon_\beta C_{\alpha\beta}E_\beta.
\]
\end{remark}

\section{Preliminaries for NCSym and NCQSym}

\subsection{NCSym}
The Hopf algebra of symmetric functions in non-commuting variables, denoted $\ncsym$, was first defined by Wolf in \cite{wolf1936symmetric}. This space should not be confused with $\nsym$ from Subsection \ref{ssec:Zbasis}.  Function in NCSym are indexed by set partitions. A \textit{set partition} $\phi=\{\mathrm{B}_1,\ldots,\mathrm{B}_k\}$ of $[n]$ is a partition of $[n]$ into nonempty disjoint subsets. Of course being a partition means that order doesn't matter, but we write our set partition (for convenience) as $\mathrm{B}_1/\cdots/\mathrm{B}_k$ where $|\mathrm{B}_1|\geq\cdots\geq |\mathrm{B}_k|$ and $\min(\mathrm{B}_i)>\min(\mathrm{B}_{i+1})$ whenever $|\mathrm{B}_i|=|\mathrm{B}_{i+1}|$. The poset, $\Bar{\mathcal{P}} $, on set partitions, $\phi$ and $\psi$, has the covering relation $\phi\lessdot\psi$ if $\psi = \mathrm{B}_1/\cdots/\mathrm{B}_i\cap\mathrm{B}_j/\cdots/\mathrm{B}_k$.

In \cite{Rosas2006SymmetricVariables}, Rosas and Sagan defined many bases analogous to those in Sym, here we review two. The \textit{symmetric monomial basis in non-commuting variables} is defined as 
\[
m_\phi = \sum_{\substack{i_j=i_{j'} \text{ iff} \\ j,j'\in B_l}}x_{i_1}\cdots x_{i_n}
\]

Then the change of basis formulae of a \textit{symmetric powersum function in noncommuting variables} to a symmetric monomial function in noncommuting variables is 
\begin{equation}\label{eqn:NCpwrsm}
    p_\phi = \sum_{\phi\leq\psi} m_\psi.
\end{equation}

\subsection{NCQSym and Morphisms}
Functions in $\ncqsym$ are indexed by set compositions of $[n]$, or ordered set partitions: a list $\Phi = (B_1,\ldots,B_k)$ of disjoint nonempty sets satisfying $\bigcup_i B_i = [n]$. When there is no danger of confusion, we compress the notation for set compositions as usual, e.g., $(\{5\},\{1,3\},\{2\},\{4\})$ is written as $5|13|2|4$. The refinement poset $\widetilde{\mathcal{P}}$ on set compositions has the cover relation $\Phi\lessdot\Psi$ if $\Psi = (B_1,\ldots,B_i\cup B_{i+1},\ldots, B_j)$. 

Denote by $\rho$ the map from set compositions of $[n]$ to compositions of $n$ that records the cardinality of each block. For example $\rho(5|13|2|4)=(1,2,1,1)$. Let $\alpha = (a_1,\ldots, a_n)$ be a composition, then $\varrho(\alpha)$ is the set composition $\Phi=(B_1,\ldots,B_k)$ where $k$ is the number of distinct parts in $\alpha$ and $i\in B_{|\{ a_l:a_l<a_i \}|+1}$ for every $i$ in $[n]$. For example $\varrho(1,6,4,3,6)=1|4|3|25$.

The Quasisymmetric functions in non-commuting variables $X = (x_1,x_2,\ldots)$, denoted $\ncqsym$, is the space spanned by the formal series
\begin{equation*}
    M_\Phi=\sum_{\alpha:\varrho(\alpha)=\Phi} x_{a_1}x_{a_2}\cdots x_{a_n}.
\end{equation*}
The map $\rho$ from set compositions to compositions induces a Hopf algebra surjection that we will also denote as $\rho$ with $\rho:\ncqsym\to \qsym$ given by $\rho(M_\Phi)=M_{\rho(\Phi)}$.

Analogous to the commutative story, there is a Hopf inclusion of NCSym to NCQSym given by
\begin{equation} \label{eqn:NCtoNCQ}
    m_\phi = \sum_{\Phi:\sort(\Phi)=\phi} M_\Phi.
\end{equation}

\section{Basic Definitions} \label{sec:ncqps}

Given a set composition $\Phi$, a (matrix) filling $\mathsf{F}_\Phi$ places a block $B_i$ in some column along row $j$ for every $j$. Define the set compositions $\row(\mathsf{F}_\Phi)$ and $\col(\mathsf{F}_\Phi)$ by analogy with the integer composition story. For example, $\col (\mathsf{F}_\Phi) =(A_1,\dotsc,A_k)$, then $A_1$ is the union of all blocks $B_j$ of $\Phi$ appearing in the first column of $\mathsf{F}_\Phi$.

We want to define a quasisymmetric powersum function in non-commuting variables based off fillings that compare blocks. To this end, let $\min(B_i)$ be the smallest integer in $B_i$, and define a total order on disjoint sets  $A>_{\widetilde{D}}B$ if and only if either $|A|>|B|$, or $|A|=|B|$ and $\min(A)<\min(B)$.

\begin{definition} \label{def:ldd}
A \emph{Labelled Diagonal Descending (LDD) filling} of $\Phi$ is an assignment of the blocks $B_i$ to entries of a matrix such that
\begin{enumerate}
    \item $B_1$ is in the first column.
    \item $B_i$ is in row $i$. 
    \item $B_{i+1}$ can be in the same column as $B_i$ if $B_i>_{\widetilde{D}}B_{i+1}$. Otherwise, $B_{i+1}$ is in the column to the right of $B_i$.
\end{enumerate}
\end{definition}

Let $\ldd(\Phi)$ be the set of all $\ldd$ fillings with the row reading of $\Phi$. Note that if $\mathsf{F}$ and $\mathsf{F}'$ are two $\ldd$ fillings such that $\col(\mathsf{F})=\col(\mathsf{F}')$ and $\row(\mathsf{F})=\row(\mathsf{F}')$, then $\mathsf{F}=\mathsf{F'}$. In other words every filling is unique to its row and column sum.

\begin{example}
The $\ldd$ fillings with $\row(\mathsf{F}_\Phi)=14|2|3$ are
$$
\begin{array}{|ccc}
14 & \gdot  & \gdot  \\
\gdot  & 2 & \gdot  \\
\gdot  & \gdot  & 3 \\
\hline
14 |& 2| & 3
\end{array}
\qquad
\begin{array}{|ccc}
14 & \gdot  & \gdot  \\
2  & \gdot & \gdot  \\
\gdot  & 3  & \gdot \\
\hline
124 |& 3 & 
\end{array}
\qquad
\begin{array}{|ccc}
14 & \gdot  & \gdot  \\
\gdot  & 2 & \gdot  \\
\gdot  & 3  & \gdot \\
\hline
14 |& 23 & 
\end{array}\qquad
\begin{array}{|ccc}
14 & \gdot  & \gdot  \\
2  & \gdot & \gdot  \\
3  & \gdot  & \gdot \\
\hline
1234 &  & 
\end{array}
$$
Now for a more exciting example, $\ldd(5|13|4|2)$ has the fillings 
$$
\begin{array}{|cccc}
5 & \gdot  & \gdot & \gdot \\
\gdot & 13  & \gdot & \gdot \\
\gdot  & \gdot & 4 & \gdot \\
\gdot  & \gdot  & \gdot & 2 \\
\hline
5 |& 13 |& 4| & 2
\end{array}
\qquad
\begin{array}{|cccc}
5 & \gdot  & \gdot & \gdot \\
\gdot & 13  & \gdot & \gdot \\
\gdot  & 4 & \gdot & \gdot \\
\gdot  & \gdot  & 2 & \gdot \\
\hline
5 |& 134 |& 2| & 
\end{array}
\qquad
$$
\end{example}

\begin{definition} \label{def:WPfill}
Quasisymmetric Powersum Function in NCQSym is defined as
\begin{equation} \label{eqn:Pldd}
P_\Phi = \sum_{\mathsf{F}_\Phi \in \ldd(\Phi)}M_{\col(\mathsf{F}_\Phi)}. 
\end{equation}
\end{definition}

\begin{example}
The previous example yields $P_{14|2|3}=M_{14|2|3}+M_{124|3}+M_{14|23}+M_{1234}$ and $P_{5|13|4|2}=M_{5|13|4|2}+M_{5|134|2}$.
\end{example}

Let $\widetilde{\mathcal{D}}$ be a subposet of $\widetilde{\mathcal{P}}$ on set compositions with the cover relation $(B_1,\ldots,B_i,B_{i+1},\ldots,B_k)\lessdot_{\widetilde{\mathcal{D}}} (B_1,\ldots,B_i\cup B_{i+1},\ldots,B_k)$ only if $B_i>_{\widetilde{D}}B_{i+1}$. Then the non-commutative analogue of Definition \ref{def:pwrsm},

\begin{definition} \label{def:wposet}
Quasisymmetric Powersum Function in NCQSym is defined as
$$P_\Phi = \sum_{\Phi \leq_{\widetilde{\mathcal{D}}} \Psi}M_\Psi= \sum_{\Phi \leq \Psi \leq C(\Phi)}M_\Psi $$
where $C(\Phi)$ is the greatest element in the poset $\widetilde{\mathcal{D}}$ above $\Phi$.
\end{definition}

\begin{example}
Let $\Phi = 1|3|25|6|4$, then $P_{1|3|25|6|4}=M_{1|3|25|6|4}+M_{13|25|6|4}+M_{1|3|256|4}+M_{13|256|4}$
\end{example}

\section{Basic Theorems and Properties}
From Definition \ref{def:wposet} it is clear that $P_\Phi$ is a basis of $\ncqsym$. In the coming sections we'll show that $P_\Phi$ refines the symmetric powersum basis of NCSym and projects to the quasisymmetric powersum basis $P_\alpha$. Finally we'll develop product and coproduct formulas, and generalizations $P^{\rhd}_\Phi$ of $P_\Phi$ given any total order on disjoint sets.

\subsection{Noncommuting Quasipowersum functions refine the Noncommuting Powersums functions} \label{sec:NCQrefine}
Let us expand a symmetric powersum function in NCSym in terms of quasisymmetric monomial function in NCQSym by combining (\ref{eqn:NCpwrsm}) and (\ref{eqn:NCtoNCQ})

\begin{equation} \label{def:NCptoM}
    p_\phi = \sum_{\phi\leq\psi}\sum_{\Psi:\sort(\Psi)=\psi} M_\Psi.
\end{equation}

 We define a \textit{labelled single row filling}, denoted as $\lsr$, as a filling where the row reading is a set partition and every row has only one entry and the column sum is a set composition. Denote $\lsr(\phi)$ as the set of all $\lsr$ fillings with the row reading of $\phi$. $\lsr$ fillings have the property that for any set-partition $\phi$ and set composition $\Psi$ there is at most one filling 
with the row reading of $\phi$ and column sum of $\Psi$.

\begin{example}
$\lsr(13|2)$ has the fillings
$$
\begin{array}{|cc}
13 & \gdot\\
\gdot  & 2\\
\hline
13 |& 2 
\end{array}
\qquad
\begin{array}{|cc}
\gdot & 13\\
2  & \gdot\\
\hline
2 |& 13 
\end{array}
\qquad
\begin{array}{|cc}
13 & \gdot \\
2  & \gdot \\
\hline
123 |&  
\end{array}
$$
\end{example}

Thus we define the change of basis as
\begin{equation} \label{eqn:lsrptoM}
p_\phi = \sum_{\mathsf{F} \in \lsr(\phi)}M_{\col (\mathsf{F})}. 
\end{equation}

This is equivalent to \eqref{def:NCptoM} because the union of multiple blocks (or none) follows from the first summand, and allowing the entry of a block to be anywhere follows from the second summand.

Let $\diag:\lsr(\phi) \to \bigcup_{\Phi} \ldd(\Phi)$ (where $\sort(\Phi) = \phi$) be the map of fillings that sorts the rows of a filling so that the filling is diagonal. Then let $\push:\bigcup_{\Phi} \ldd(\Phi) \to \lsr(\phi)$ (where $\sort(\Phi) = \phi$) be a map of fillings such that $\push$ sorts the rows, so that the row reading is a set partition.
\begin{example}
Below is an example of the map $\diag$ with an $\lsr$ filling, and notice here that both of the fillings are unique and have the same column sum.
\[\footnotesize
\begin{array}{|ccccc}
\gdot & \gdot & 45    & \gdot & \gdot \\
\gdot & \gdot & \gdot & 67    & \gdot \\
1     & \gdot & \gdot & \gdot & \gdot \\
\gdot & \gdot & 2     & \gdot & \gdot \\
\gdot & 3     & \gdot & \gdot & \gdot \\
\hline
1 |   & 3 |   & 245| & 67| &
\end{array}
\quad
\underrightarrow{\diag}
\quad
\begin{array}{|ccccc}
1     & \gdot & \gdot & \gdot & \gdot \\
\gdot & 3     & \gdot & \gdot & \gdot \\
\gdot & \gdot & 45    & \gdot & \gdot \\
\gdot & \gdot & 2     & \gdot & \gdot \\
\gdot & \gdot & \gdot & 67    & \gdot \\
\hline
1 |   & 3 |   & 245| & 67| &
\end{array}
\]
\end{example}

\begin{theorem} \label{thm:toncsym}
$P$ basis refines the symmetric powersum function in NCSym. In other words
\begin{equation} \label{eqn:NCQrefine} 
p_\phi = \sum_{\Phi:\sort(\Phi)=\phi} P_\Phi.
\end{equation}
\end{theorem}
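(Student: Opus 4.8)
The plan is to prove \eqref{eqn:NCQrefine} by expanding both sides in the quasimonomial basis $M_\Psi$ of $\ncqsym$ and checking that the coefficient of each $M_\Psi$ agrees. By \eqref{eqn:lsrptoM} the left-hand side is $\sum_{\mathsf{F}\in\lsr(\phi)}M_{\col(\mathsf{F})}$, and since $\lsr$ fillings are determined by their row reading and column sum, this is precisely $\sum_{\Psi}[\,\exists\,\mathsf{F}\in\lsr(\phi)\text{ with }\col(\mathsf{F})=\Psi\,]\,M_\Psi$, i.e. each $M_\Psi$ that appears does so with coefficient exactly $1$. On the right-hand side, by Definition \ref{def:WPfill} (via \eqref{eqn:Pldd}) we have $\sum_{\Phi:\sort(\Phi)=\phi}P_\Phi=\sum_{\Phi:\sort(\Phi)=\phi}\sum_{\mathsf{F}_\Phi\in\ldd(\Phi)}M_{\col(\mathsf{F}_\Phi)}$, and $\ldd$ fillings are likewise determined by their row and column readings. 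So it suffices to establish a bijection between $\lsr(\phi)$ and $\bigsqcup_{\Phi:\sort(\Phi)=\phi}\ldd(\Phi)$ that preserves the column sum; then both sides are the same multiset-free sum $\sum M_\Psi$.

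The maps $\diag$ and $\push$ introduced just before the statement are the natural candidates for this bijection. First I would verify that $\diag:\lsr(\phi)\to\bigcup_\Phi\ldd(\Phi)$ is well-defined: given $\mathsf{F}\in\lsr(\phi)$, sorting its rows so that the row reading becomes diagonal produces a filling whose row reading is some set composition $\Phi$ with $\sort(\Phi)=\phi$, and one must check that the three conditions of Definition \ref{def:ldd} hold — in particular that after the diagonalizing sort, block $B_{i+1}$ sits in the same column as $B_i$ exactly when $B_i>_{\widetilde D}B_{i+1}$, which is forced because the diagonal arrangement is the unique one compatible with the $>_{\widetilde D}$ order on the columns. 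Crucially, sorting rows does not change which blocks lie in a given column, so $\col(\diag(\mathsf{F}))=\col(\mathsf{F})$. Symmetrically, $\push:\bigcup_\Phi\ldd(\Phi)\to\lsr(\phi)$ sorts the rows of an $\ldd$ filling so that the row reading is the set partition $\phi=\sort(\Phi)$; again this is column-preserving and lands in $\lsr(\phi)$ because each resulting row has a single entry. Then I would argue $\diag$ and $\push$ are mutually inverse: both $\lsr$ and $\ldd$ fillings are uniquely determined by their (column sum, row reading) pair (the row reading being $\phi$ on one side and the appropriate $\Phi$ on the other), and since both maps fix the column sum and merely permute rows, composing them returns the unique filling with the original data.

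The main obstacle is the well-definedness of $\diag$ — specifically, showing that the row-sorted filling genuinely satisfies condition 3 of Definition \ref{def:ldd} rather than just being "diagonal" in a loose sense. An $\lsr$ filling may have several rows whose blocks land in the same column, and when we sort rows to make the filling diagonal we need the blocks within each column to end up in $>_{\widetilde D}$-decreasing order reading top to bottom, and blocks in distinct columns to be placed so that consecutive rows either share a column (when the $>_{\widetilde D}$ relation permits) or step strictly right. I would handle this by noting that the columns of an $\lsr$ filling, read left to right, already carry a well-defined set composition $\col(\mathsf{F})=\Psi$, and that there is exactly one way to arrange all the blocks into a diagonal-descending shape realizing this $\Psi$: within a column, decreasing $>_{\widetilde D}$ order; across columns, left to right. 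That arrangement is by construction an element of $\ldd(\Phi)$ for the $\Phi$ obtained by reading its rows. A clean way to package all of this is to observe that $\ldd(\Phi)$ and the diagonalizations of $\lsr(\phi)$ are both in bijection with the set of $\Psi$ satisfying $\phi\leq\sort(\Psi)$ together with a compatible ordering refinement — but the shortest route is simply the column-preserving inverse-pair argument above, with the verification of Definition \ref{def:ldd}(3) being the one genuinely non-formal point. Once the bijection is in hand, \eqref{eqn:NCQrefine} follows immediately by comparing the two $M_\Psi$-expansions.
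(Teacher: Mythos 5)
Your proposal is correct and follows essentially the same route as the paper: both expand the two sides in the quasimonomial basis via \eqref{eqn:lsrptoM} and \eqref{eqn:Pldd}, and then establish that $\diag$ and $\push$ are mutually inverse, column-sum-preserving bijections between $\lsr(\phi)$ and $\bigcup_{\Phi:\sort(\Phi)=\phi}\ldd(\Phi)$, using the fact that both kinds of fillings are uniquely determined by their row reading and column sum. Your extra attention to the well-definedness of $\diag$ (checking condition 3 of Definition \ref{def:ldd} after sorting) is a point the paper passes over more quickly, but it does not change the substance of the argument.
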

\begin{proof}
By substituting \eqref{eqn:lsrptoM} and \eqref{eqn:Pldd} into \eqref{eqn:NCQrefine}, it is equivalent to prove
\[
\sum_{\widetilde{\mathsf{F}} \in \lsr(\phi)}M_{\col (\widetilde{\mathsf{F}})} =  \sum_{\Phi:\sort(\Phi)=\phi} \sum_{\mathsf{F} \in \ldd(\Phi)}M_{\col(\mathsf{F})}.
\]
We will prove that the above is true by showing that $\push$ and $\diag$ are column-reading-preserving bijections between the fillings of $\lsr(\phi)$ and $\bigcup_{\phi}\ldd(\Phi)$ by showing that $\diag\circ\push=\push\circ\diag=id$.

Fix a filling $\widetilde{\mathsf{F}}\in \lsr(\Phi)$. $\diag$ maps $\widetilde{\mathsf{F}}$ to a $\ldd$ filling such that $\col(\widetilde{\mathsf{F}})=\col(\diag(\widetilde{\mathsf{F}}))$ and the rows may be permuted. $\push$ maps $\diag(\widetilde{\mathsf{F}})$ by placing the row with the greatest entry and permutes it with the top row, then takes the row with the second greatest entry and permutes that with the second row and so on, thus resulting in the row reading of $\push(\diag(\widetilde{\mathsf{F}}))$ being a set partition. Since the rows don't change entries (only permutes) under the maps, $\row(\push(\diag(\widetilde{\mathsf{F}})))=\row(\widetilde{\mathsf{F}})$. Knowing that the column sum is preserved under both maps, i.e. $\col(\push(\diag(\widetilde{\mathsf{F}})))=\col(\widetilde{\mathsf{F}})$, there is at most one unique $\lsr$ filling for a given row reading and column sum. Thus $\push(\diag(\widetilde{\mathsf{F}}))=\widetilde{\mathsf{F}}$. The same sort of argument is used for the other direction.
\end{proof}

\subsection{Product and Coproduct}

Let $B_1,\ldots,B_j,A_1,\ldots,A_k$ be disjoint sets. Then the \textit{shuffle} of lists of disjoint sets  $(B_1,\cdots,B_k)$ and $(A_1,\cdots,A_j)$ is defined recursively by
 \begin{enumerate}
    \item $\emptyset\shuffle (B_1,\cdots,B_j) = (B_1,\cdots,B_j) \shuffle \emptyset = (B_1,\cdots,B_j)$
    \item $(B_1,\cdots,B_j)\shuffle (A_1,\cdots,A_k) = B_1\big|\big[(B_2,\ldots,B_j)\shuffle (A_1,\cdots,A_k)\big] + A_1\big|\big[ (B_1,\cdots,B_j)\shuffle (A_2,\ldots,A_k) \big]$.  
\end{enumerate}
A \textit{quasishuffle} of lists of disjoint sets, denoted $\qshuf$, uses the same initial condition 1, but adds the term $+ (A_1\cup B_1)\big|\big[(B_2,\ldots,B_j) \qshuf (A_2,\ldots,A_k) \big]$ to recursion 2. Let $\Psi^{\uparrow n}$ denote the operation of adding $n$ to every element of every block of $\Psi$, a set composition of $\{n+1,\ldots,n+m\}$. Then the shifted shuffle of two set partitions $\Phi$ and $\Psi$, of $[n]$ and $[m]$ respectively, is $\Phi\shuffle\Psi^{\uparrow n}$, every shuffle is then a set composition of $[n+m]$. Finally let $\Gamma = (C_1,\ldots,C_l)$, if for a subsequence $\Gamma' = (C_{i_1},\ldots,C_{i_k})$ of $\Gamma$, with $m=|C_{i_1}|+\ldots+|C_{i_k}|$, let the standardization of $\Gamma'$, denoted $\Gamma'^{\downarrow }$, be the set composition of $[m]$ where the relative order of the elements across all the $C_{i_j}$ is preserved. 

We extend the notions of shifting and standardizing to fillings, denoted $\mathsf{F}^{\uparrow n}$ and $\mathsf{F}^{\downarrow}$. When the shifting factor $n$ is clear from context, we suppress it. Furthermore, given two $\ldd$ fillings $\mathsf{F}_1$ and $\mathsf{F}_2$, we define a quasishuffle of fillings on $\mathsf{F}_1$ and $\mathsf{F}_2^{\uparrow}$ just like for integer matrix fillings, by taking unions of blocks in place of sums of integers.
\begin{example}
\[
\begin{aligned}\footnotesize
\begin{array}{|cc}
13 & \gdot    \\
\gdot  & 2 \\
\hline
13 |& 2| 
\end{array}
\quad\qshuf\quad
\begin{array}{|c}
 45  \\
\hline
 45|  
\end{array}
 = & \quad
\begin{array}{|c}
13|    \\
\hline
13|  
\end{array}
\left(
\begin{array}{|c}
2    \\
\hline
2|  
\end{array}
\quad \qshuf \quad
\begin{array}{|c}
 45  \\
\hline
 45|  
\end{array}
\right)
\quad  + \quad
\begin{array}{|c}
45    \\
\hline
45|   
\end{array}
\left(
\begin{array}{|cc}
13 & \gdot    \\
\gdot  & 2 \\
\hline
13 |& 2| 
\end{array}
\quad \qshuf \quad
\emptyset\right)\\ & \qquad + 
\quad
\begin{array}{|c}
13  \\
45   \\
\hline
1345 | 
\end{array}
\left(
\begin{array}{|c}
2    \\
\hline
2|  
\end{array}
\quad\qshuf\quad\emptyset
\right)\\
=& \quad
\begin{array}{|ccc}
13 & \gdot  & \gdot   \\
\gdot  & 2 & \gdot  \\
\gdot  & \gdot & 45  \\
\hline
13 |& 2|& 45| 
\end{array}
\quad + \quad
\begin{array}{|ccc}
13 & \gdot  & \gdot   \\
\gdot  & 45 & \gdot  \\
\gdot  & \gdot & 2  \\
\hline
13 |& 45|& 2| 
\end{array}
\quad + \quad
\begin{array}{|ccc}
45 & \gdot  & \gdot   \\
\gdot  & 13 & \gdot  \\
\gdot  & \gdot & 2  \\
\hline
13 |& 45|& 2| 
\end{array}
\\ & \qquad + \quad
\begin{array}{|cc}
13 & \gdot     \\
45  & \gdot   \\
\gdot  & 2   \\
\hline
1345 |& 2| 
\end{array}
\quad + \quad
\begin{array}{|cc}
13 & \gdot     \\
\gdot  & 45   \\
\gdot  & 2   \\
\hline
13 |& 245|  
\end{array}
\end{aligned}
\]
\end{example}

Let $\Phi $ and $\Psi$ be set compositions of $n$ and $m$ respectively. Recall the quasisymmetric monomial basis in non-commuting variables has a shifted quasishuffle product and a standardized deconcatenate coproduct,
\begin{equation*} 
    M_\Phi M_\Psi = \sum_{\Upsilon\in \Phi \qshuf \Psi^{\uparrow n}} M_{\Upsilon},\quad \Delta(M_\Phi)=\sum_{i=0}^k M_{(B_1|\cdots|B_i)^{\downarrow}}\otimes M_{(B_{i+1}|\cdots|B_k)^{\downarrow}}.
\end{equation*}
and the fillings analogue
\begin{equation} \label{eqn:Mprod}
    M_{\col(\mathsf{F}_1)} M_{\col(\mathsf{F}_2)} = \sum_{\mathsf{F}\in \mathsf{F}_1 \qshuf \mathsf{F}_2^{\uparrow}} M_{\col(\mathsf{F})},\quad \Delta(M_{\col(\mathsf{F})})=\sum_{\mathsf{F} = \mathsf{F}_1|\mathsf{F}_2} M_{\col(\mathsf{F}_1^{\downarrow})}\otimes M_{\col(\mathsf{F}_2^{\downarrow})}.
\end{equation}

\begin{theorem} \label{thm:prod}
Let $\Phi$ and $\Psi$ be two set compositions of $n$ and $m$. The powersum basis of $\ncqsym$ has a shifted shuffle product, i.e.
\begin{equation*} \label{eqn:Pprod}
P_\Phi P_\Psi =\sum_{\Upsilon \in \Phi \shuffle \Psi^{\uparrow n} } P_\Upsilon .
\end{equation*}
\end{theorem}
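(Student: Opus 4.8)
The plan is to run the same argument as in the proof of Theorem~\ref{thm:Qprod}, but to observe that since all blocks of a set composition are distinct and, after shifting, the blocks coming from $\Psi$ are disjoint from those of $\Phi$, every multiplicity correction that appeared in the $\qsym$ case now collapses, leaving the coefficient $1$. First expand both sides into the monomial basis: by \eqref{eqn:Pldd} and the fillings form of the product rule \eqref{eqn:Mprod},
\[
P_\Phi P_\Psi = \sum_{\substack{\mathsf{F}_1\in\ldd(\Phi),\ \mathsf{F}_2\in\ldd(\Psi)\\ \mathsf{F}\in\mathsf{F}_1\qshuf\mathsf{F}_2^{\uparrow}}} M_{\col(\mathsf{F})}, \qquad \sum_{\Upsilon\in\Phi\shuffle\Psi^{\uparrow n}} P_\Upsilon = \sum_{\Upsilon\in\Phi\shuffle\Psi^{\uparrow n}}\ \sum_{\mathsf{F}\in\ldd(\Upsilon)} M_{\col(\mathsf{F})} .
\]
So it suffices to produce a $\col(\mathsf{F})$-preserving bijection between the two indexing sets. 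I would use $(\mathsf{F}_1,\mathsf{F}_2,\mathsf{F})\mapsto(\row(\mathsf{F}),\mathsf{F})$, with inverse "split the rows of $\mathsf{F}$ according to whether their block lies in $[n]$ or in $\{n+1,\dots,n+m\}$ (unshifting in the latter case), keeping the induced column structure."

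The heart of the argument is to check that both maps are well defined. For the forward direction I would verify that an $\mathsf{F}$ arising from a quasishuffle of $\ldd$ fillings has the property that \emph{each of its columns is a $>_{\widetilde{D}}$-decreasing chain of blocks}: an unmerged column inherited from $\mathsf{F}_1$ or $\mathsf{F}_2^{\uparrow}$ is already such a chain by Definition~\ref{def:ldd}, and a merged column is obtained by ``sort combining'' two such chains, which — since the blocks are pairwise distinct, hence totally ordered by $>_{\widetilde{D}}$ — is again a strictly decreasing chain. Consequently every ascent of $\row(\mathsf{F})$ (with respect to $>_{\widetilde{D}}$) falls on a column boundary, so $\mathsf{F}\in\ldd(\row(\mathsf{F}))$; moreover the relative order of the blocks of $\Phi$ (and of $\Psi^{\uparrow n}$) inside $\row(\mathsf{F})$ is unchanged by the sort combining, so $\row(\mathsf{F})$ is an ordinary (non-quasi) shuffle of $\Phi$ and $\Psi^{\uparrow n}$, i.e. $\row(\mathsf{F})\in\Phi\shuffle\Psi^{\uparrow n}$. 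The point is that the quasishuffle's merging happens only at the column level, never at the row level.

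For the inverse direction, given $\Upsilon\in\Phi\shuffle\Psi^{\uparrow n}$ and $\mathsf{F}\in\ldd(\Upsilon)$, the blocks of $\Upsilon$ contained in $[n]$ are exactly the blocks of $\Phi$ in their original order, and likewise for $\Psi^{\uparrow n}$, so the rows of $\mathsf{F}$ split canonically. I would define $\mathsf{F}_1$ (respectively $\mathsf{F}_2$) by declaring two consecutive blocks of $\Phi$ (respectively $\Psi$) to share a column precisely when they share a column of $\mathsf{F}$, and check this is a legal $\ldd$ filling: if $B_p\not>_{\widetilde{D}}B_{p+1}$ are consecutive in $\Phi$ but lie in a common column of $\mathsf{F}$, then (columns of an $\ldd$ filling being runs of consecutive rows) everything between them in $\Upsilon$ also lies in that column, and transitivity of $>_{\widetilde{D}}$ forces $B_p>_{\widetilde{D}}B_{p+1}$, a contradiction. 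Finally $\mathsf{F}$ is recovered from $(\mathsf{F}_1,\mathsf{F}_2)$ as the quasishuffle interleaving the columns of $\mathsf{F}_1$ and $\mathsf{F}_2^{\uparrow}$ in the order they occur in $\mathsf{F}$, merging a column of $\mathsf{F}_1$ with a column of $\mathsf{F}_2^{\uparrow}$ exactly when a column of $\mathsf{F}$ carries blocks from both; since no two columns of $\mathsf{F}_1$ are ever identified under a quasishuffle, this term is produced exactly once. This shows the two maps are mutually inverse \emph{and} that $\mathsf{F}_1\qshuf\mathsf{F}_2^{\uparrow}$ is multiplicity-free, which is precisely where the argument departs from (and is simpler than) the proof of Theorem~\ref{thm:Qprod}, whose correction factor $O_{\mathsf{F}}$ is forced only by repeated parts.

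The main obstacle is the pair of well-definedness checks above — namely that the quasishuffle of two $\ldd$ fillings always lands inside the $\ldd$ fillings and is counted with multiplicity one. Both reduce entirely to the distinctness/disjointness of the blocks and the transitivity of the total order $>_{\widetilde{D}}$, so once these are in place the computation is immediate.
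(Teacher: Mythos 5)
Your proposal is correct and follows essentially the same route as the paper's proof: expand both sides into the monomial basis via $\ldd$ fillings and show that $\ldd(\Phi)\qshuf\ldd(\Psi)^{\uparrow}$ and $\ldd(\Phi\shuffle\Psi^{\uparrow n})$ coincide, with the disjointness of the blocks killing all multiplicities. Your write-up is actually more careful than the paper's at the two points it merely asserts (that a quasishuffle of $\ldd$ fillings is again $\ldd$, and that the split filling $\mathsf{F}_1$ is $\ldd$ via transitivity of $>_{\widetilde{D}}$), so no changes are needed.
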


\begin{proof}
From \eqref{eqn:Pldd} and \eqref{eqn:Mprod},
\begin{align*}
    P_\Phi P_\Psi &= \left( \sum_{\mathsf{F}_1\in\ldd(\Phi)} M_{\col(\mathsf{F}_1)} \right)\left( \sum_{\mathsf{F}_2\in\ldd(\Psi)} M_{\col(\mathsf{F}_2)} \right)  \\
    &=\sum_{\substack{\mathsf{F}_1\in\ldd(\Phi),\mathsf{F}_2\in\ldd(\Psi)^{\uparrow}\\\mathsf{F}\in\mathsf{F}_1\qshuf\mathsf{F}_2}}M_{\col(\mathsf{F})}.
\end{align*}
The proof will be carried out the same way as in Section \ref{sec:prodco}, however in this case we need not worry about the coefficient since the coefficient is either 1 or 0. Thus, with abuse of notation, we will show that the sets $\ldd(\Phi)\qshuf\ldd(\Psi)^{\uparrow}$ and $\ldd(\Phi\shuffle\Psi^{\uparrow n})$ are equivalent.

Fix $\mathsf{F}\in\mathsf{F}_1\qshuf\mathsf{F}_2$ where $\mathsf{F}_1\in \ldd(\Phi)$ and $\mathsf{F}_2\in \ldd(\Psi)^{\uparrow}$, then $\row(\mathsf{F})\in\row(\mathsf{F}_1)\shuffle\row(\mathsf{F}_2^{\uparrow})$, which by definition $\row(\mathsf{F}) \in \Phi\shuffle\Psi^{\uparrow n}$. Since the quasishuffle of two $\ldd$ fillings is a $\ldd$ filling, $\mathsf{F}\in\ldd(\Phi\shuffle\Psi^{\uparrow n})$. Conversely fix a filling $\mathsf{F}\in\ldd(\Phi\shuffle\Psi^{\uparrow n})$, then from this filling we construct two fillings: $\mathsf{F}_1$ by removing all entries that has an integer greater than $n$ (then remove all empty columns and rows), and $\mathsf{F}_2$ by removing all entries that has an integer less than or equal to $n$ (then remove all empty columns and rows) and subtracting $n$ from all integers. Thus $\row(\mathsf{F}_1)=\Phi$ and $\mathsf{F}_1$ is an $\ldd$ filling, thus $\mathsf{F}_1\in\ldd(\Phi)$ and likewise for $\mathsf{F}_2$, thus $\mathsf{F}\in \mathsf{F}_1\shuffle\mathsf{F}_2$. Thus the sets are equivalent,
\begin{equation*}
    \sum_{\substack{\mathsf{F}_1\in\ldd(\Phi),\mathsf{F}_2\in\ldd(\Psi)^{\uparrow}\\\mathsf{F}\in\mathsf{F}_1\qshuf\mathsf{F}_2}}M_{\col(\mathsf{F})} = \sum_{\mathsf{F}\in \ldd(\Phi\shuffle\Psi^{\uparrow n})}M_{\col(\mathsf{F})} =\sum_{\Upsilon\in \Phi\shuffle\Psi^{\uparrow n}}P_\Upsilon
\end{equation*}
\end{proof}

\begin{theorem} \label{thm:NCQcoprod}
Let $\Phi=B_1|\cdots|B_k$ be a set composition of $[n]$. Then the quasisymmetric powersum basis in non-commuting variables has a standardized deconcatenation coproduct,
\begin{equation*}
    \Delta(P_\Phi)=\sum_{i=0}^k P_{(B_1|\cdots|B_i)^{\downarrow}}\otimes P_{(B_{i+1}|\cdots|B_k)^{\downarrow}}.
\end{equation*}
\end{theorem}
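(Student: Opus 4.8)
The argument follows the template of the proof of Theorem~\ref{thm:prod} (and Theorem~\ref{thm:coprod}), but with no coefficients to track, since every $\ldd$ filling is determined by its row and column readings. Combining \eqref{eqn:Pldd} with the coproduct in \eqref{eqn:Mprod} gives
\[
\Delta(P_\Phi)=\sum_{\mathsf{F}\in\ldd(\Phi)}\ \sum_{\mathsf{F}=\mathsf{F}_1|\mathsf{F}_2} M_{\col(\mathsf{F}_1^{\downarrow})}\otimes M_{\col(\mathsf{F}_2^{\downarrow})},
\]
where the inner sum runs over vertical deconcatenations of $\mathsf{F}$. Applying \eqref{eqn:Pldd} to the right-hand side of the claimed identity, it suffices to construct a bijection between pairs $(\mathsf{F},\mathsf{F}_1|\mathsf{F}_2)$ on the one hand and triples $\bigl(i,\mathsf{G}_1,\mathsf{G}_2\bigr)$ with $0\le i\le k$, $\mathsf{G}_1\in\ldd\bigl((B_1|\cdots|B_i)^{\downarrow}\bigr)$, $\mathsf{G}_2\in\ldd\bigl((B_{i+1}|\cdots|B_k)^{\downarrow}\bigr)$ on the other, under which $\col(\mathsf{F}_1^{\downarrow})=\col(\mathsf{G}_1)$ and $\col(\mathsf{F}_2^{\downarrow})=\col(\mathsf{G}_2)$.

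For the forward map: in an $\ldd$ filling of $\Phi$ the column indices of $B_1,B_2,\dots,B_k$ are weakly increasing and jump by at most one at each step (the third condition of Definition~\ref{def:ldd}), so each column carries a consecutive run of blocks and the last block of any $\ldd$ filling sits in its last column. Hence a cut $\mathsf{F}=\mathsf{F}_1|\mathsf{F}_2$ after the $j$-th column separates the rows as $(B_1,\dots,B_i)\,|\,(B_{i+1},\dots,B_k)$, where $i$ is the number of blocks in the first $j$ columns; send $(\mathsf{F},\mathsf{F}_1|\mathsf{F}_2)$ to $(i,\mathsf{F}_1^{\downarrow},\mathsf{F}_2^{\downarrow})$. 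A prefix of an $\ldd$ filling is again $\ldd$, and standardization preserves $>_{\widetilde{D}}$ (it preserves block sizes and the relative order of block minima), so $\mathsf{F}_1^{\downarrow}\in\ldd\bigl((B_1|\cdots|B_i)^{\downarrow}\bigr)$ and $\mathsf{F}_2^{\downarrow}\in\ldd\bigl((B_{i+1}|\cdots|B_k)^{\downarrow}\bigr)$. For the reverse map: given $(i,\mathsf{G}_1,\mathsf{G}_2)$, undo the two standardizations — relabel $\mathsf{G}_1$ by the order isomorphism onto $B_1\cup\cdots\cup B_i$ and $\mathsf{G}_2$ onto $B_{i+1}\cup\cdots\cup B_k$ — to get $\ldd$ fillings $\mathsf{F}_1,\mathsf{F}_2$ with row readings $(B_1,\dots,B_i)$ and $(B_{i+1},\dots,B_k)$, and glue them by placing $\mathsf{F}_2$ so that its first column is immediately to the right of the last column of $\mathsf{F}_1$. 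The glued filling $\mathsf{F}$ has row reading $\Phi$ and is $\ldd$: its first two conditions are immediate, and the only new adjacency, between $B_i$ (in the last column of $\mathsf{F}_1$) and $B_{i+1}$ (in the column just to its right), satisfies the third condition, which always allows $B_{i+1}$ to lie one column right of $B_i$. These maps are mutually inverse, and they manifestly preserve column readings up to standardization.

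Exchanging the order of summation then yields
\[
\Delta(P_\Phi)=\sum_{i=0}^k\ \sum_{\substack{\mathsf{G}_1\in\ldd((B_1|\cdots|B_i)^{\downarrow})\\ \mathsf{G}_2\in\ldd((B_{i+1}|\cdots|B_k)^{\downarrow})}} M_{\col(\mathsf{G}_1)}\otimes M_{\col(\mathsf{G}_2)}=\sum_{i=0}^k P_{(B_1|\cdots|B_i)^{\downarrow}}\otimes P_{(B_{i+1}|\cdots|B_k)^{\downarrow}}.
\]
The point requiring care — and the expected main obstacle — is that a single filling $\mathsf{F}$ does \emph{not} realize every index $i$, only those $i$ for which $B_i$ ends a column of $\mathsf{F}$, so the identity is an equality of the two aggregated double sums rather than a termwise one; the bijection above is precisely what reconciles them, and checking that the glued filling in the reverse direction genuinely satisfies the third condition of Definition~\ref{def:ldd} (which rests on the observation that the last block of an $\ldd$ filling always occupies the last column) is the only spot where the verification is not purely formal.
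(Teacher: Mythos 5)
Your proof is correct and follows essentially the same route as the paper's: expand $P_\Phi$ in the monomial basis via $\ldd$ fillings, apply the filling version of the monomial coproduct, and exchange the order of summation by identifying vertical deconcatenations of an $\ldd$ filling of $\Phi$ with pairs of $\ldd$ fillings of the standardized prefix and suffix. The paper states this exchange more tersely (fixing the degree of $\col(\mathsf{F}_1)$ and asserting the set of left factors is exactly $\ldd$ of the prefix), whereas you make the bijection and the verification of the $\ldd$ conditions explicit, but the underlying argument is the same.
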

\begin{proof}
Definition \ref{def:wposet} and \eqref{eqn:Mprod} yields
\begin{equation*}
    \Delta(P_\Phi)=\Delta\left(\sum_{\mathsf{F}\in\ldd(\Phi)}M_{\col(\mathsf{F})}\right)=\sum_{\mathsf{F}\in\ldd(\Phi)}\sum_{\mathsf{F}=\mathsf{F}_1|\mathsf{F}_2}M_{\col(\mathsf{F}_1^{\downarrow})}\otimes M_{\col(\mathsf{F}_2^{\downarrow})}. 
\end{equation*}
Let $m$ be the degree of $\col(\mathsf{F}_1)$ and fix $m$. Let $\mathsf{F}_1'$, such that $\row(\mathsf{F}_1')=\col(\mathsf{F}_1')=(B_1,\cdots,B_j)$ (which means $\mathsf{F}_1'$ is a "diagonal" filling), then every filling of degree $m$ comes from moving entry $B_{i+1}$ to the column of $B_i$ when $B_i>_{\widetilde{\mathcal{D}}}B_{i+1}$. Thus the set of fillings is $\ldd(B_1,\ldots,B_j)$ and likewise for the right side, thus
\[
\sum_{\mathsf{F}\in\ldd(\Phi)}\sum_{\mathsf{F}=\mathsf{F}_1|\mathsf{F}_2}M_{\col(\mathsf{F}_1^{\downarrow})}\otimes M_{\col(\mathsf{F}_2^{\downarrow})} =  \sum_{i=0}^k\sum_{\substack{\mathsf{F}_1\in\ldd((B_1|\cdots|B_i)^{\downarrow})\\\mathsf{F}_2\in\ldd((B_{i+1}|\cdots|B_k)^{\downarrow})}} M_{\col(\mathsf{F}_1)}\otimes M_{\col(\mathsf{F}_2)}.
\]
\end{proof}

\begin{remark}
The method of proving the theorems above could be solved more straight forward and perhaps easily if we used the subposet definition instead of the fillings definition. However we choose this approach to demonstrate that one can make a Hopf algebra out of the fillings using this approach and that there may not always be a subposet definition for different Hopf algebas.
\end{remark}

\section{The Projection of the Quasisymmetric Powersum Basis} \label{sec:proj}
Since there is a powersum basis in Sym, NCSym, QSym, and NCQSym, it is natural to explore the projection of $P_\Phi$ onto $\qsym$. As we'll see in Theorem \ref{thm:toqsym}, unlike the story in the monomial basis, it's not the case that $\rho(P_\Phi)=P_{\rho(\Phi)}$.

We first look to when two quasipowersum function in $\ncqsym$ map to the same function in $\qsym$ under $\rho$.

\begin{proposition}
Let $\Phi$ and $\Psi$ be two set compositions composed of blocks $B_1|\cdots|B_k$ and $A_1|\cdots|A_k$ respectively. If the condition that $B_i >_{\widetilde{D}} B_{i+1}$ if and only if $A_i >_{\widetilde{D}} A_{i+1}$ for all $i<k$ holds and $|B_i|=|A_i|$, then $\rho(P_\Phi)=\rho(P_\Psi)$.
\end{proposition}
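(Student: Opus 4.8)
The plan is to expand $P_\Phi$ and $P_\Psi$ in the quasimonomial basis of $\ncqsym$, apply the projection $\rho$ term by term, and check that the two resulting elements of $\qsym$ literally coincide. The point is that once everything is written in monomials, all that survives the projection $\rho$ is the tuple of block sizes together with the pattern of $>_{\widetilde{D}}$-comparisons between consecutive blocks, and the hypotheses say exactly that $\Phi$ and $\Psi$ agree on this data; so the main work is a clean combinatorial description of $P_\Phi$ itself.

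First I would record the following. For $\Phi = B_1|\cdots|B_k$ let $D(\Phi) = \{\, i : B_i >_{\widetilde{D}} B_{i+1}\,\} \subseteq \{1,\dots,k-1\}$ be the set of descent gaps. By Definition~\ref{def:ldd}, an $\ldd$ filling of $\Phi$ amounts to choosing, independently at each descent gap $i$, whether $B_{i+1}$ is placed in the column of $B_i$ or in the next column, while at every non-descent gap $B_{i+1}$ is forced into the next column. Thus $\ldd(\Phi)$ is in bijection with the subsets $S \subseteq D(\Phi)$, and the filling $\mathsf{F}_S$ attached to $S$ has column reading $\Phi/S$, the coarsening of $\Phi$ obtained by merging consecutive blocks exactly across the gaps of $S$. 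Hence \eqref{eqn:Pldd} reads
\[
P_\Phi \;=\; \sum_{S \subseteq D(\Phi)} M_{\Phi/S},
\]
which is also Definition~\ref{def:wposet} read off through $C(\Phi) = \Phi/D(\Phi)$.

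Next I apply $\rho$. Since $\rho(M_\Upsilon) = M_{\rho(\Upsilon)}$, we get $\rho(P_\Phi) = \sum_{S \subseteq D(\Phi)} M_{\rho(\Phi/S)}$. Now $\rho(\Phi/S)$ is the integer composition obtained from $(|B_1|,\dots,|B_k|)$ by summing the sizes within each maximal run of indices whose internal gaps all lie in $S$; in particular it depends only on the size tuple and on $S$, not on the blocks. By hypothesis $|B_i| = |A_i|$ for every $i$, and $D(\Phi) = D(\Psi)$, which is precisely the assumption that $B_i >_{\widetilde{D}} B_{i+1} \iff A_i >_{\widetilde{D}} A_{i+1}$. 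Therefore $\rho(\Phi/S) = \rho(\Psi/S)$ for each $S \subseteq D(\Phi) = D(\Psi)$, and summing over these subsets gives $\rho(P_\Phi) = \rho(P_\Psi)$.

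The step that needs genuine care is the bijection $\ldd(\Phi) \leftrightarrow \{\, S \subseteq D(\Phi)\,\}$ with $\col(\mathsf{F}_S) = \Phi/S$: one must verify that the choices at distinct descent gaps really are independent and, on the poset side, that merging an entire $>_{\widetilde{D}}$-descending run of blocks from the left never violates Definition~\ref{def:ldd}. This reduces to the fact that a union of blocks has strictly larger cardinality than any single one of them, so $B_i \cup \cdots \cup B_\ell >_{\widetilde{D}} B_{\ell+1}$ whenever $\ell$ is a descent gap, together with the complementary fact that no non-descent gap ever gets merged inside $[\Phi, C(\Phi)]$. Everything past that is the routine bookkeeping of cardinalities indicated above.
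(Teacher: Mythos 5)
Your argument is correct and follows essentially the same route as the paper: both expand $P_\Phi$ and $P_\Psi$ in the quasimonomial basis, use $\rho(M_\Upsilon)=M_{\rho(\Upsilon)}$, and match terms via the common descent pattern and block sizes (your subsets $S\subseteq D(\Phi)$ are just an explicit indexing of the paper's bijection $\hat\Phi\leftrightarrow\hat\Psi$ between coarsenings). The descent-set bookkeeping you add is a slightly cleaner packaging of the same idea, and the independence of choices at distinct gaps is immediate from Definition~\ref{def:ldd}, since condition~3 there compares the original blocks $B_i$ and $B_{i+1}$ rather than column unions.
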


\begin{proof}
Recall that $M_\Phi$ projects to $\qsym$ by,
\begin{equation}
    \rho(M_\Phi)=M_{\rho(\Phi)}.
\end{equation}
It follows that 
\begin{equation}
    \rho(P_\Phi)=\rho\left(\sum_{\Phi\leq_{\widetilde{\mathcal{D}}}\hat{\Phi}}M_{\hat{\Phi}}\right)=\sum_{\Phi\leq_{\widetilde{\mathcal{D}}}\hat{\Phi}}M_{\rho\left(\hat{\Phi}\right)}\quad\text{ and } \quad \rho(P_\Psi)=\sum_{\Psi\leq_{\widetilde{\mathcal{D}}}\hat{\Psi}}M_{\rho\left(\hat{\Psi}\right)}.
\end{equation}
For every $\hat{\Phi}$ there exists one and only one $\hat{\Psi}$ such that $\rho(\hat{\Phi})=\rho(\hat{\Psi})$. Fix a $\hat{\Phi}$, then $\Phi\leq\hat{\Phi}$ by combining certain blocks $B_j$ and $B_{j+1}$, since $B_i>_{\widetilde{D}} B_{i+1}$ if and only if $A_i>_{\widetilde{D}} A_{i+1}$ for all $i<k$ holds, then there exists a $\Psi\leq\hat{\Psi}$ by combining blocks $A_j$ and $A_{j+1}$. $\rho(\hat{\Phi})=\rho(\hat{\Psi})$ holds due to the fact that $|B_i|=|A_i|$ for all $i\leq k$ and the same blocks are being combined.
\end{proof}

Let $\Phi = (B_1,\cdots,B_l)$ be a set composition of $[n]$ and let $\mathfrak{B}_k$ be the set of all $B_i$ of size $k$. Then an element $\sigma \in \mathfrak{S}_{\mathfrak{B}_1}\times \mathfrak{S}_{{\mathfrak{B}_2}}\cdots\times \mathfrak{S}_{{\mathfrak{B}_n}}=\mathfrak{S}_\Phi$ acts on $\Phi$ by place permutation. For example, $(35)(24)(4|25|7|13|6)=4|13|6|25|7$. Note that $\rho(\Phi) = \rho(\sigma(\Phi))$. This leads to the following theorem.

\begin{theorem} \label{thm:toqsym} 
For any composition $\alpha$ and set composition $\Phi$ such that $\rho(\Phi)=\alpha$,
\begin{equation} \label{eq:P to P}
P_\alpha = \sum_{\sigma\in\mathfrak{S}_\Phi}\rho(P_{\sigma(\Phi)}).
\end{equation}
\end{theorem}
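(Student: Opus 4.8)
The plan is to expand both sides of \eqref{eq:P to P} in the quasisymmetric monomial basis $M_\beta$ of $\qsym$ and to compare coefficients. For the left-hand side we use Definition~\ref{def:pwrsm}, so that $P_\alpha = \sum_{\alpha\leq_\mathcal{D}\beta} C_{\alpha\beta} M_\beta$ with $C_{\alpha\beta}=\prod_i m_i(\alpha)!/c_i(\alpha,\beta)!$. For the right-hand side we use Definition~\ref{def:wposet}, namely $P_{\sigma(\Phi)}=\sum_{\sigma(\Phi)\leq_{\widetilde{\mathcal D}}\Psi}M_\Psi$, and then apply $\rho(M_\Psi)=M_{\rho(\Psi)}$. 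So the right-hand side equals $\sum_{\sigma\in\mathfrak S_\Phi}\sum_{\sigma(\Phi)\leq_{\widetilde{\mathcal D}}\Psi}M_{\rho(\Psi)}$. The task is therefore to show that, for each composition $\beta$ with $\alpha\leq_\mathcal D\beta$, the number of pairs $(\sigma,\Psi)$ with $\sigma\in\mathfrak S_\Phi$, $\sigma(\Phi)\leq_{\widetilde{\mathcal D}}\Psi$, and $\rho(\Psi)=\beta$ is exactly $C_{\alpha\beta}$ (and that no $\beta$ outside the interval $[\alpha,C(\alpha)]$ contributes, which follows because $\rho$ is order-preserving and $\rho(C(\Phi))=C(\rho(\Phi))=C(\alpha)$).

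The key step is a counting bijection. Fix $\beta$ with $\alpha\leq_{\mathcal D}\beta$; write $\alpha=\gamma_1|\cdots|\gamma_l$ and $\beta=(|\gamma_1|,\ldots,|\gamma_l|)$ according to how $\beta$ coarsens $\alpha$. First I would observe that a pair $(\sigma,\Psi)$ with $\rho(\Psi)=\beta$ is determined by the following data: the place-permutation $\sigma$ tells us, within each ``size class'' of blocks, which block of $\Phi$ sits in which row; then the requirement $\sigma(\Phi)\leq_{\widetilde{\mathcal D}}\Psi$ with $\rho(\Psi)=\beta$ forces $\Psi$ to be obtained from $\sigma(\Phi)$ by merging exactly the adjacent runs prescribed by $\beta$, and the cover relation $\lessdot_{\widetilde{\mathcal D}}$ (which merges $B_i$ into $B_{i+1}$ only when $B_i>_{\widetilde D}B_{i+1}$, i.e. here, since all merged blocks in a group have the same size, when $\min(B_i)<\min(B_{i+1})$) imposes constraints on $\sigma$. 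Concretely, within each group of consecutive rows of $\sigma(\Phi)$ that must merge into a single block of $\Psi$, and among those rows whose blocks all have a common size $i$, the minima must be strictly decreasing down the group; so the only freedom left is an arbitrary permutation of the block-minima across distinct merge-groups that have the same size $i$. Counting these: all $m_i(\alpha)!$ arrangements of the size-$i$ blocks into the size-$i$ rows are allowed by $\sigma$, but within each of the $c_i(\alpha,\beta)$-many merge-runs of size-$i$ rows the order is forced, giving a multinomial $m_i(\alpha)!/\bigl(\prod (\text{run sizes})!\bigr)=m_i(\alpha)!/c_i(\alpha,\beta)!$ choices. Taking the product over $i$ gives exactly $C_{\alpha\beta}$ pairs mapping to $M_\beta$, as required.

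The main obstacle, and the part needing the most care, is the bookkeeping in this last paragraph: making precise that a pair $(\sigma,\Psi)$ with $\rho(\Psi)=\beta$ is \emph{equivalent} to a choice, for each part size $i$, of an assignment of size-$i$ blocks of $\Phi$ to the size-$i$ slots of $\beta$'s coarsening that is \emph{strictly decreasing in block-minimum along each merge-run}; one must check both that every such assignment is realized by some valid $(\sigma,\Psi)$ with $\sigma(\Phi)\leq_{\widetilde{\mathcal D}}\Psi$, and that distinct assignments give distinct pairs. The forward direction uses the uniqueness statement after Definition~\ref{def:ldd} (a set composition $\Psi$ is determined by an $\ldd$ filling via its row and column readings, or here more simply that $\leq_{\widetilde{\mathcal D}}$ is a genuine partial order so $\Psi$ is determined by $\sigma(\Phi)$ together with which runs are merged); the reverse direction is just reading off $\sigma$ from the row order of $\Psi$'s pieces. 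Once this equivalence is nailed down, the count is the elementary multinomial identity above, and Proposition~\ref{prp:coefcat} is not even needed — only the definition of $c_i(\alpha,\beta)$ as counting size-$i$ parts within each merge-block of $\beta$.
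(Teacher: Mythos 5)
Your proposal is correct in substance, but it takes a different route from the paper's. The paper never expands in the $M$ basis with explicit coefficients; instead it substitutes the filling definition $P_{\sigma(\Phi)}=\sum_{\mathsf{F}\in\ldd(\sigma(\Phi))}M_{\col(\mathsf{F})}$, reduces to the case where $\Phi$ is strict, and reorganizes the double sum over pairs $(\sigma,\mathsf{F})$ into a sum over $\sld$ fillings $\Bar{\mathsf{F}}$ and elements of $\mathfrak{S}_{\Bar{\mathsf{F}}}$, then uses the bijection $\rho:\sld(\Phi)\to\sd(\alpha)$ and the precomputed size $|\mathfrak{S}_{\Bar{\mathsf{F}}}|$ from \eqref{eqn:setsize} to land on formula \eqref{eqn:PtoMSD} for $P_\alpha$. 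Your argument matches coefficients of $M_\beta$ directly against Definition~\ref{def:pwrsm} by counting pairs $(\sigma,\Psi)$ with $\sigma(\Phi)\leq_{\widetilde{\mathcal D}}\Psi$ and $\rho(\Psi)=\beta$. The two counts are the same combinatorial object --- your set of admissible $\sigma$ for a fixed $\beta$ is exactly the paper's $\mathfrak{S}_{\Bar{\mathsf{F}}}$ for the $\sld$ filling with column reading $\beta$ --- so what you buy is independence from the filling machinery (you only need the poset definitions and the elementary multinomial identity), at the cost of having to set up the $(\sigma,\Psi)$-to-assignment equivalence from scratch, which the paper gets for free from the uniqueness of an $\ldd$ filling given its row and column readings.

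Two small slips to fix, neither of which damages the count. First, since $A>_{\widetilde{D}}B$ for equal-size blocks means $\min(A)<\min(B)$, the forced order within a merge-run has block-minima strictly \emph{increasing} down the run, not decreasing; you state the correct inequality $\min(B_i)<\min(B_{i+1})$ in the same sentence, so this is an internal inconsistency rather than an error in the enumeration (either way the order within a run is uniquely determined, which is all the multinomial count needs). Second, $\rho(C(\sigma(\Phi)))=C(\alpha)$ does not hold for every $\sigma$ in the orbit (e.g.\ for $\Phi=\{2\}|\{1\}$ one has $C(\Phi)=\Phi$ while $C((1,1))=(2)$); it holds only for the strict representative. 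The claim you actually need --- that every $\Psi\geq_{\widetilde{\mathcal D}}\sigma(\Phi)$ has $\rho(\Psi)$ in the interval $[\alpha,C(\alpha)]$ of $\mathcal{D}$ --- is still immediate, because every cover in $\widetilde{\mathcal{D}}$ merges a block into a weakly smaller one and hence projects to a cover in $\mathcal{D}$.
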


Let $\Phi=(B_1,\ldots,B_l)$ be a set composition of $[n]$. We call $\Phi$ \textit{strict} if for all $i<j$ with $|B_i|=|B_j|$, we have $B_i>_{\widetilde{\mathcal{D}}} B_j$. For example $\Phi = 2|13|4|5$ is a strict set composition of $[n]$.

\begin{definition}
Let a Strict Labelled Diagonal filling, denoted $\sld$, be an $\ldd$ filling such that the row reading is a strict set composition.
\end{definition}

It is important to note that $\sld$ fillings follow the rule that $B_i$ and $B_{i+1}$ can be in the same column if $|B_i|\geq|B_{i+1}|$. 

\begin{example}
The first two are $\sld$ fillings, but the thirs is only a $\ldd$ filling.
$$
\begin{array}{|cccc}
2 & \gdot  & \gdot & \gdot \\
\gdot & 13  & \gdot & \gdot \\
\gdot  & 4 & \gdot & \gdot \\
\gdot  & \gdot  & 5 & \gdot \\
\hline
2 |& 134 |& 5| & 
\end{array}
\qquad
\begin{array}{|cc}
2 & \gdot   \\
\gdot & 13   \\
\gdot  & 4   \\
\gdot  & 5   \\
\hline
2 |& 1345| 
\end{array}
\qquad
\begin{array}{|cc}
4 & \gdot   \\
\gdot & 13   \\
\gdot  & 2   \\
\gdot  & 5   \\
\hline
4 |& 1235| 
\end{array}.
$$
\end{example}

Note, for a fixed $\Bar{\mathsf{F}}\in\ldd$, some $\sigma\in\mathfrak{S}_\Phi$ preserve the diagonal descent property: $\sigma\Bar{\mathsf{F}}\in\ldd$, but others do not. Next we will show how to generate $\ldd$ fillings from permutations and $\sld$ fillings. 

\begin{example}
Let $\Bar{\mathsf{F}}$ be a filling of $\sld(2|13|4|5)$ on the left. Then the permutations of $\mathfrak{S}_{\Phi}$ that results in an $\ldd$ filling are $\{id, (13)(4)(2),(134)(2))\}$ which are shown below from left to right:
\[
\begin{array}{|cccc}
2     & \gdot & \gdot & \gdot \\
\gdot & 13    & \gdot & \gdot \\
\gdot & 4     & \gdot & \gdot \\
\gdot & 5     & \gdot & \gdot \\
\hline
2 |& 1345 |&  & 
\end{array}
\qquad
\begin{array}{|cccc}
4     & \gdot & \gdot & \gdot \\
\gdot & 13    & \gdot & \gdot \\
\gdot & 2     & \gdot & \gdot \\
\gdot & 5     & \gdot & \gdot \\
\hline
4 |& 1235 |&  & 
\end{array}
\qquad
\begin{array}{|cccc}
5     & \gdot & \gdot & \gdot \\
\gdot & 13    & \gdot & \gdot \\
\gdot & 2     & \gdot & \gdot \\
\gdot & 4     & \gdot & \gdot \\
\hline
2 |& 1345 |&  & 
\end{array}.
\]
The permutation $(14)(3)(2)$ is an example of a permutation that does not result in a $\ldd$ filling. 
\[
\begin{array}{|cccc}
5     & \gdot & \gdot & \gdot \\
\gdot & 13    & \gdot & \gdot \\
\gdot & 4     & \gdot & \gdot \\
\gdot & 2     & \gdot & \gdot \\
\hline
5 |& 1234 |&  & 
\end{array}.
\]
\end{example}

The group $\mathfrak{S}_\Phi$ acts analogously on fillings, permuting matrix entries. For a fixed filling $\Bar{\mathsf{F}}$, we are interested in a subset $\mathfrak{S}_{\Bar{\mathsf{F}}}$ of $\mathfrak{S}_\Phi$ described as follows. Say $\sigma\in\mathfrak{S}_{\Bar{\mathsf{F}}}$ if for every $\sigma B_i$, $\sigma B_j$, and $\sigma B_k$ in the same column, if $|B_i| = |B_j|=|B_k|$ and $\sigma B_k$ above $\sigma B_i$ above $\sigma B_j$, then $B_k>_{\widetilde{\mathcal{D}}}B_i>_{\widetilde{\mathcal{D}}}B_j$.

Let $\mathfrak{B}_{k,j}$ be the set of all $B_i$ of size $k$ in column $j$. The subgroup $\mathfrak{H}_\mathsf{F}\subseteq \mathfrak{S}_\Phi$ is $\mathfrak{S}_{\mathfrak{B}_{1,1}}\times\cdots\times\mathfrak{S}_{\mathfrak{B}_{1,j}}\times\cdots\times\mathfrak{S}_{\mathfrak{B}_{2,1}}\times\cdots\times\mathfrak{S}_{\mathfrak{B}_{n,n}}$. In other words, $\mathfrak{H}_\mathsf{F}$ is the group of all $\sigma$ that preserves the columns and placements of block sizes. Thus if $\mathsf{F}$ is an $\ldd$ filling, then for $\sigma\in\mathfrak{H}_\mathsf{F}$, $\sigma\mathsf{F}$ is not an $\ldd$ filling unless $\sigma = id$. It is easy to see that $|\mathfrak{H}_\mathsf{F}| = \prod_i c_i(\row(\mathsf{F}),\col(\mathsf{F}))$.

Note that the set $\mathfrak{S}_\mathsf{F}\mathsf{F}$ contains a unique $\sld$ filling, $\Bar{\mathsf{F}}$, for any $\ldd$ filling $\mathsf{F}$. Let $\Bar{\mathsf{F}}$ be a $\sld$ filling with row reading $\Phi$. Consider the coset $\mathfrak{S}_\Phi/\mathfrak{H}_\mathsf{F}$, all permutations must involve permutations across columns such that the order in columns is preserved, which is precisely $\mathfrak{S}_\mathsf{F}$. Thus, 
\begin{equation} \label{eqn:setsize}
    |\mathfrak{S}_{\Bar{\mathsf{F}}}|=\frac{|\mathfrak{S}_\Phi|}{|\mathfrak{H}_\mathsf{F}|} = 
    \prod_i \frac{m_i(\alpha)!}{c_i(\row(\Bar{\mathsf{F}}),\col(\Bar{\mathsf{F}})))!}.
\end{equation}

\begin{proof}[Proof of Theorem \ref{thm:toqsym}]
Note that for the set $\mathfrak{S}_\Phi\Phi$ of all set compositions resulting from the action of $\mathfrak{S}_\Phi$ must contain a strict set composition. Since the action is transitive, we let $\Phi$ be a strict set composition for the rest of this proof. We take the right hand side of \eqref{eq:P to P} and substitute Definition \ref{def:WPfill} to get
\begin{equation*}
\rho\left(\sum_{\sigma\in\mathfrak{S}_\Phi}P_{\sigma(\Phi)}\right) 
= \rho\left(\sum_{\sigma\in\mathfrak{S}_\Phi}\sum_{\mathsf{F}\in\ldd(\sigma(\Phi))}M_{\col(\mathsf{F})}\right)
\end{equation*}
Fix a permutation $\sigma\in\mathfrak{S}_\Phi$ and a filling $\mathsf{F}\in\ldd(\sigma(\Phi))$. Then we can permute the entries of $\mathsf{F}$ by $\sigma^{-1}$, this results in a $\sld$ filling $\Bar{\mathsf{F}}$. This means that $\sigma\in\mathfrak{S}_{\Bar{\mathsf{F}}}$ because $\mathfrak{S}_{\Bar{\mathsf{F}}}$ are all the permutations that permute the entries of a $\sld$ filling to result in a $\ldd$ filling, and we know that $\sigma^{-1}(\mathsf{F})=\Bar{\mathsf{F}}$ which implies that $\mathsf{F}=\sigma(\Bar{\mathsf{F}})$. Thus, 
\begin{equation*}
    \rho\left(\sum_{\sigma\in\mathfrak{S}_\Phi}\sum_{\mathsf{F}\in\ldd(\sigma(\Phi))}M_{\col(\mathsf{F})}\right) =\rho\left(\sum_{\Bar{\mathsf{F}}\in\sld(\Phi)}\sum_{\Bar{\sigma}\in\mathfrak{S}_{\Bar{\mathsf{F}}}}M_{\col(\Bar{\sigma}(\Bar{\mathsf{F}}))}\right).
\end{equation*}
Recall that $\rho(\Phi)=\rho(\sigma(\Phi))$, thus
\begin{equation*}
    \rho\left(\sum_{\Bar{\mathsf{F}}\in\sld(\Phi)}\sum_{\Bar{\sigma}\in\mathfrak{S}_{\Bar{\mathsf{F}}}}M_{\col(\Bar{\sigma}(\Bar{\mathsf{F}}))}\right)
    =\sum_{\Bar{\mathsf{F}}\in\sld(\Phi)}|\mathfrak{S}_{\Bar{\mathsf{F}}}|M_{\rho(\col(\Bar{\mathsf{F}}))}.
\end{equation*}
Consider when $\rho$ acts on a filling $\Bar{\mathsf{F}}$ by replacing the entry $B_i$ with $|B_i|$. $\rho$ is a bijection between $\sld(\Phi)$ and $\sd(\alpha)$ because $\rho(\Phi)=\alpha$ and the rule ``$B_i$ is in the same column as $B_{i+1}$ when $|B_i|\geq|B_{i+1}|$" maps to the rule ``$a_i$ is in the same column as $a_{i+1}$ when $a_i\geq a_{i+1} $". Finally, since $\mathfrak{S}_{\Bar{\mathsf{F}}}$ depends only on the sizes of the entries of $\Bar{\mathsf{F}}$, $|\mathfrak{S}_{\Bar{\mathsf{F}}}|=|\mathfrak{S}_{\widetilde{\mathsf{F}}}|$ such that $\rho(\Bar{\mathsf{F}})=\widetilde{\mathsf{F}}$. Thus we get the following,
\begin{equation*}
    \sum_{\Bar{\mathsf{F}}\in\sld(\Phi)}|\mathfrak{S}_{\Bar{\mathsf{F}}}|M_{\rho(\col(\Bar{\mathsf{F}}))} = \sum_{\widetilde{\mathsf{F}}\in \sd(\alpha)}|\mathfrak{S}_{\widetilde{\mathsf{F}}}|M_{\col(\widetilde{\mathsf{F}})}= P_\alpha
\end{equation*}
\end{proof}

Now we determine the image of $P_\Phi$ in QSym in the quasisymmetric fundamental basis. This is the last theorem needed to prove Theorem \ref{thm:MNrule}. We start by defining two operations on set compositions. Let $\Phi = (B_1,\ldots,B_k)$ be a set composition of $[n]$ with $\rho(\Phi)=(a_1,\ldots,a_k)$. Break $\Phi$ at nondescents, $B_i<_{\widetilde{\mathcal{D}}}B_{i+1}$ to build the sequence $\Bar{\Phi} = (\Phi_1,\ldots,\Phi_l)$ and $\rho(\Bar{\Phi}) = (\alpha_1,\ldots,\alpha_l)$. (Note $\alpha_i=(a_{j_1},\ldots,a_{j_1+j_2})$ is a subsequence of $\alpha = \rho(\Phi)$ satisfying $B_{j_1-1}<_{\widetilde{\mathcal{D}}}B_{j_1}>_{\widetilde{\mathcal{D}}}B_{j_1+1}>_{\widetilde{\mathcal{D}}}\cdots>_{\widetilde{\mathcal{D}}} B_{j_1+j_2}<_{\widetilde{\mathcal{D}}}B_{j_1+j_2+1}$.) Associate to $\rho$ the functions $\rho_C$ and $\rho_T$ defined by
\begin{equation*} 
    \rho_C(\Phi)=(|\alpha_1|,|\alpha_2|,\cdots,|\alpha_l|) \quad \text{and} \quad \rho_T(\Phi)= \alpha_1'|\alpha_2'|\cdots|\alpha_l'
\end{equation*}
where $\alpha_i'$ denotes ribbon transpose. These yield minimal and maximal compositions associated to $\alpha$ in a way that will become clear shortly. For example, $\rho_C(2|5|14|36|7)=(2,5)$ and $\rho_T(2|5|14|36|7)=(2,1,2,2)$. 

\begin{remark} \label{rm:subsetTC}
In subset notation, $\rho(\Phi)=\rho_C(\Phi)\cup D$, and $\rho_T(\Phi)=\rho_C(\Phi)\cup D^\complement$, where $D$ represents the ``descending divisions".
\end{remark}

\begin{theorem} \label{thm:pwrsmim}
Let $\Phi$ be a set composition and $\mu_{\mathcal{P}}$ be the M\"obius function on the refinement poset $\mathcal{P}$ on compositions, then
\begin{equation} \label{eqn:proF}
\rho(P_\Phi)=\sum_{\rho_T(\Phi)\leq\beta\leq\rho_C(\Phi)}\mu_{\mathcal{P}}(\beta,\rho_C(\Phi))F_\beta.
\end{equation}
\end{theorem}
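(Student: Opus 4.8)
The plan is to push $P_\Phi$ through $\rho$ into a multiplicity-free sum of quasimonomials over an explicit interval of the refinement poset $\mathcal{P}$, and then Möbius-invert against the fundamental basis.

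First I would start from Definition~\ref{def:WPfill}, using $\rho(M_\Psi)=M_{\rho(\Psi)}$, so that $\rho(P_\Phi)=\sum_{\mathsf{F}\in\ldd(\Phi)}M_{\rho(\col(\mathsf{F}))}$. The heart of the argument is the claim that $\mathsf{F}\mapsto\rho(\col(\mathsf{F}))$ is a bijection from $\ldd(\Phi)$ onto the interval $[\rho(\Phi),\rho_C(\Phi)]$ of $\mathcal{P}$. A column reading of an $\ldd$ filling of $\Phi$ is a coarsening of $\Phi$ in which each merged block is a consecutive $>_{\widetilde{D}}$-descending run of blocks of $\Phi$; since $\col$ is injective on $\ldd(\Phi)$, these readings are in bijection with the subsets $D'$ of the set $D$ of ``descending divisions'' of $\Phi$, where $D'$ records the merged positions. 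Applying $\rho$ simply deletes the positions of $D'$ from the subset $\mathrm{div}(\rho(\Phi))\subseteq[n-1]$, and $\mathrm{div}(\rho(\Phi))\setminus D=\mathrm{div}(\rho_C(\Phi))$ by Remark~\ref{rm:subsetTC}; hence the image is exactly the set of compositions $\gamma$ with $\mathrm{div}(\rho_C(\Phi))\subseteq\mathrm{div}(\gamma)\subseteq\mathrm{div}(\rho(\Phi))$, i.e. $\rho(\Phi)\le\gamma\le\rho_C(\Phi)$, each occurring once. This yields $\rho(P_\Phi)=\sum_{\rho(\Phi)\le\gamma\le\rho_C(\Phi)}M_\gamma$.

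Second I would verify that the right-hand side of \eqref{eqn:proF} has this same monomial expansion. Expanding $F_\beta=\sum_{\delta\le\beta}M_\delta$ and using that the interval $[\beta,\rho_C(\Phi)]$ of $\mathcal{P}$ is a Boolean lattice, so $\mu_{\mathcal{P}}(\beta,\rho_C(\Phi))=(-1)^{|\mathrm{div}(\beta)|-|\mathrm{div}(\rho_C(\Phi))|}$, the coefficient of $M_\delta$ in $\sum_{\rho_T(\Phi)\le\beta\le\rho_C(\Phi)}\mu_{\mathcal{P}}(\beta,\rho_C(\Phi))F_\beta$ becomes $\sum_S(-1)^{|S|-|\mathrm{div}(\rho_C(\Phi))|}$, where $S$ ranges over subsets with $\mathrm{div}(\rho_C(\Phi))\subseteq S\subseteq\mathrm{div}(\delta)\cap\mathrm{div}(\rho_T(\Phi))$. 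Such an alternating sum over a Boolean interval vanishes unless the interval is a single point, i.e. unless $\mathrm{div}(\delta)\cap\mathrm{div}(\rho_T(\Phi))=\mathrm{div}(\rho_C(\Phi))$, in which case it is $1$. Since Remark~\ref{rm:subsetTC} gives $\mathrm{div}(\rho_T(\Phi))=\mathrm{div}(\rho_C(\Phi))\sqcup([n-1]\setminus\mathrm{div}(\rho(\Phi)))$, intersecting with $\mathrm{div}(\delta)$ returns $\mathrm{div}(\rho_C(\Phi))$ precisely when $\mathrm{div}(\rho_C(\Phi))\subseteq\mathrm{div}(\delta)$ and $\mathrm{div}(\delta)\subseteq\mathrm{div}(\rho(\Phi))$, that is, when $\rho(\Phi)\le\delta\le\rho_C(\Phi)$. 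Thus the right-hand side of \eqref{eqn:proF} equals $\sum_{\rho(\Phi)\le\gamma\le\rho_C(\Phi)}M_\gamma=\rho(P_\Phi)$; this also supplies the last ingredient for Theorem~\ref{thm:MNrule}.

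The step I expect to be the main obstacle is the first one: correctly pinning down which set compositions occur as column readings of $\ldd(\Phi)$ --- equivalently, the precise up-set of $\Phi$ in $\widetilde{\mathcal{D}}$ --- and checking that $\rho$ identifies this set bijectively, not merely surjectively, with $[\rho(\Phi),\rho_C(\Phi)]$, so that the monomial expansion of $\rho(P_\Phi)$ is genuinely multiplicity-free; this multiplicity-freeness is exactly what makes $\rho(P_\Phi)\ne P_{\rho(\Phi)}$. Granting that, the remainder is a routine Boolean-lattice Möbius computation, entirely parallel to the standard inversion $M_\alpha=\sum_{\beta\le\alpha}\mu_{\mathcal{P}}(\beta,\alpha)F_\beta$.
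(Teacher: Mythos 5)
Your proposal is correct and follows essentially the same route as the paper: both hinge on the multiplicity-free expansion $\rho(P_\Phi)=\sum_{\rho(\Phi)\le\gamma\le\rho_C(\Phi)}M_\gamma$ followed by a Boolean-interval M\"obius cancellation that isolates the interval $[\rho_T(\Phi),\rho_C(\Phi)]$. The only differences are cosmetic --- you justify the first step via the $\ldd$-filling bijection rather than the subposet definition (in somewhat more detail than the paper's ``one can see that''), and you run the M\"obius step backwards (expanding $F_\beta$ into $M_\delta$'s to verify the claimed formula, where the paper expands $M_\gamma$ into $F_\beta$'s to derive it).
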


\begin{proof}
One can see that 
\[
\rho(P_\Phi)=\rho\left(\sum_{\Phi\leq\Psi\leq C(\Phi)}M_\Psi\right)=\sum_{\rho(\Phi)\leq\gamma\leq\rho_C(\Phi)}M_\gamma
\]
Using the change of basis from the monomial basis to the fundamental basis we get
\[
\rho(P_\Phi)=\sum_{\rho(\Phi)\leq\gamma\leq\rho_C(\Phi)}\sum_{\gamma\geq\beta} \mu_{\mathcal{P}}(\beta,\gamma)F_\beta.
\]

We gather all the $F_\beta$ terms and write

\[
\rho(P_\Phi)=\sum_{\beta\leq\rho_C(\Phi)}\left( \sum_{\gamma\in G(\Phi,\beta)} \mu_{\mathcal{P}}(\beta,\gamma) \right)F_\beta
\]
where $G(\Phi,\beta)$ is the interval $[\rho(\Phi)\wedge \beta , \rho_C(\Phi)]$. We note that this a boolean lattice so the cardinality of this set is either 1 or an even number. Let $\widetilde{\gamma}=\min(G(\Phi,\beta))$, where $\widetilde{\gamma}\neq\rho_C(\Phi)$. It takes $j$ coarsenings to go from $\widetilde{\gamma}$ to $\rho_C(\Phi)$, thus we can relate all compositions in $G(\Phi,\beta)$ relative to $\widetilde{\gamma}$ by the tuple $(g_1,\ldots,g_j)$ where $g_i=1$ if the two entries are coarsened and $g_i=0$ if the two entries are not coarsened. This means that there are $2^j$ compositions in $G(\Phi,\beta)$ and there is natural pairing of two tuples $(0,g_2,\ldots,g_j)$ and $(1,g_2,\ldots,g_j)$. The resulting compositions have a length difference of 1, which means that $\mu_{\mathcal{P}}(\beta,\gamma_1)+\mu_{\mathcal{P}}(\beta,\gamma_2)=0$ where $\gamma_1$ and $\gamma_2$ are the compositions that come from $(0,g_2,\ldots,g_j)$ and $(1,g_2,\ldots,g_j)$ respectively. Since the cardinality of $G(\Phi,\beta)$ is even and the sum of every pair is 0, the coefficient of $F_\alpha$ is 0.

Now consider the case $\rho(\Phi)\wedge \beta =\rho_C(\Phi)$, then the coefficient is $\mu_{\mathcal{P}}((\beta,\rho_C(\Phi))$. To find the interval for all such $\beta$'s: since $\rho(\Phi)\wedge \beta =\rho_C(\Phi)$, in subset notation $\rho(\Phi)\cap \beta =\rho_C(\Phi)$. This means $\beta \supseteq \rho_C(\Phi)$ and, for every $A \in \rho(\Phi) - \rho_C(\Phi)$, then $A \notin \beta$. Since we want the smallest $\beta$ in the interval, i.e. the largest subset, we should take $\beta = \rho_C(\Phi) \cup (\rho(\Phi) - \rho_C(\Phi))^\complement = \rho_T(\Phi)$.
\end{proof}

\subsection{Proof of Theorem \ref{thm:MNrule} and \ref{thm:totcob}} \label{sec:proof}

Notice that we can express a quasipowersum function in the fundamental basis by substituting \eqref{eqn:proF} into \eqref{eq:P to P}

\begin{equation}\label{eqn:sub1}
P_\alpha = \sum_{\sigma\in\mathfrak{S}_\Phi}\rho(P_{\sigma(\Phi)})=\sum_{\sigma\in\mathfrak{S}_\Phi}\sum_{\rho_T(\sigma(\Phi))\leq\beta\leq\rho_C(\sigma(\Phi))}\mu_{\mathcal{P}}(\beta,\rho_C(\sigma(\Phi)))F_\beta.
\end{equation}

This expression will be needed in the proof of Theorem \ref{thm:MNrule}, and fortunately it turns out that the intervals $[\rho_T(\sigma(\Phi)),\rho_C(\sigma(\Phi))]$ are disjoint for different $\sigma$, which means that it is quite easy to find the coefficient of $F_\beta$.

\begin{lemma} \label{lem:djsets}
Let $\Phi$ be a set composition and $\sigma\in\mathfrak{S}_\Phi$.
Fix $\beta$ such that $\rho_T(\sigma(\Phi))\leq\beta\leq\rho_C(\sigma(\Phi))$. If there exists a $\sigma'\in \mathfrak{S}_\Phi$ such that $\rho_C(\sigma(\Phi))\neq\rho_C(\sigma'(\Phi))$, then $\beta$ is not in the interval $[\rho_T(\sigma'(\Phi)),\rho_C(\sigma'(\Phi))]$.
\end{lemma}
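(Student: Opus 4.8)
The plan is to show that the map $\sigma \mapsto \rho_C(\sigma(\Phi))$ together with the ``descending divisions'' $D$ of $\sigma(\Phi)$ completely determines $\sigma$ up to the subgroup $\mathfrak{H}$ of permutations fixing the diagonal descent structure, and that the interval $[\rho_T(\sigma(\Phi)),\rho_C(\sigma(\Phi))]$ is, in subset notation, precisely the set of $\beta$ with $\rho_C(\sigma(\Phi)) \subseteq \beta \subseteq \rho_C(\sigma(\Phi)) \cup D^\complement$ where $D = D(\sigma(\Phi))$ is the set of positions $i$ with $(\sigma B)_i >_{\widetilde{D}} (\sigma B)_{i+1}$ (Remark~\ref{rm:subsetTC}). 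So membership of $\beta$ in the interval is equivalent to the two conditions: (i) $\rho_C(\sigma(\Phi)) \subseteq \beta$, and (ii) $\beta \cap D(\sigma(\Phi)) = \emptyset$, i.e. every ``descending cut'' of $\sigma(\Phi)$ is absent from $\beta$ as a subset.

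First I would fix $\sigma$ and $\beta$ in the interval $[\rho_T(\sigma(\Phi)),\rho_C(\sigma(\Phi))]$, and suppose $\sigma'$ is another permutation in $\mathfrak{S}_\Phi$ with $\rho_C(\sigma'(\Phi)) \neq \rho_C(\sigma(\Phi))$; the goal is to produce a position witnessing $\beta \notin [\rho_T(\sigma'(\Phi)),\rho_C(\sigma'(\Phi))]$. Since $\sigma$ and $\sigma'$ are place permutations within blocks of equal size, the underlying composition is unchanged, $\rho(\sigma(\Phi)) = \rho(\sigma'(\Phi)) = \alpha$; hence both $\rho_C(\sigma(\Phi))$ and $\rho_C(\sigma'(\Phi))$ are obtained from $\alpha$ by coarsening exactly at the descents (with respect to $>_{\widetilde{D}}$) of the respective permuted set compositions. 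The hypothesis $\rho_C(\sigma(\Phi)) \neq \rho_C(\sigma'(\Phi))$ therefore forces the descent sets $D(\sigma(\Phi))$ and $D(\sigma'(\Phi))$ to differ: there is a position $i$ that is a descent of one but not the other. I would split into the two cases. If $i \in D(\sigma'(\Phi)) \setminus D(\sigma(\Phi))$: then $i$ is a cut of $\rho_C(\sigma(\Phi))$ as a subset (since non-descents of $\sigma(\Phi)$ remain as boundaries in $\rho_C$), so $i \in \rho_C(\sigma(\Phi)) \subseteq \beta$; but $i \in D(\sigma'(\Phi))$, so condition (ii) for the $\sigma'$-interval fails and $\beta \notin [\rho_T(\sigma'(\Phi)),\rho_C(\sigma'(\Phi))]$. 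If instead $i \in D(\sigma(\Phi)) \setminus D(\sigma'(\Phi))$: then $i \notin \beta$ (condition (ii) for the $\sigma$-interval, which $\beta$ satisfies), while $i$ is a non-descent of $\sigma'(\Phi)$, hence $i \in \rho_C(\sigma'(\Phi))$; so condition (i) for the $\sigma'$-interval fails, again giving $\beta \notin [\rho_T(\sigma'(\Phi)),\rho_C(\sigma'(\Phi))]$.

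The one subtlety worth being careful about is making sure the descent sets really must differ: a priori two different descent sets could coarsen $\alpha$ to the same composition only if they agreed on every position where $\alpha$ actually has a ``boundary,'' which is every position, so $D(\sigma(\Phi)) \ne D(\sigma'(\Phi))$ as subsets of $[n-1]$ follows immediately from $\rho_C(\sigma(\Phi)) \ne \rho_C(\sigma'(\Phi))$ once we recall $\rho_C(\Psi) = \alpha \setminus D(\Psi)$ in subset notation. The main obstacle, then, is not any single deduction but rather setting up the subset-notation dictionary cleanly — matching $\rho_C$, $\rho_T$, and the descent set $D$ against the interval endpoints via Remark~\ref{rm:subsetTC} — after which the case analysis above is a short verification. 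I would also double-check that $D(\Psi)$ here means exactly the set of indices $i$ with $(\Psi)_i >_{\widetilde{D}} (\Psi)_{i+1}$, consistent with the ``break at nondescents'' convention used to define $\bar\Phi$ preceding Theorem~\ref{thm:pwrsmim}, so that ``$i$ a descent'' and ``$i \notin \rho_C$'' / ``$i \in$ some coarsened part of $\rho_C$'' line up correctly.
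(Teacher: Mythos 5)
Your proposal is correct and follows essentially the same route as the paper: both work in subset notation via Remark \ref{rm:subsetTC}, reduce everything to the descent set $D$ (using $\rho_C = \rho(\Phi)\setminus D$ and $\rho_T = \rho_C \cup D^\complement$), and locate a single position that lies in one interval's constraints but violates the other's. The only cosmetic difference is that the paper phrases this as a contradiction with a ``without loss of generality'' choice of the distinguishing element, whereas you spell out both cases explicitly, which if anything is slightly cleaner.
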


\begin{proof}
We'll do a proof by contradiction and use set notation. If $\rho_C(\sigma(\Phi))\neq\rho_C(\sigma'(\Phi))$ then without loss of generality there exists $A\in\rho_C(\sigma'(\Phi))$, $A\not\in\rho_C(\sigma(\Phi))$. Thus if $\beta$ is less than $\rho_C(\sigma(\Phi))$ and $\rho_C(\sigma'(\Phi))$ then $\beta \supseteq \rho_C(\sigma'(\Phi)) \ni A$. Since $A\in\rho_C(\sigma'(\Phi))$, we must have $A\in \rho(\sigma'(\Phi))=\rho(\Phi)$, but $A\not\in\rho_C(\sigma(\Phi))$ so $A$ is a ``descending division" so $\rho_T(\sigma(\Phi)) \not \ni A$. However if $\beta\geq\rho_T(\sigma(\Phi))$ then $\beta \subseteq \rho_T(\sigma(\Phi))$ so $\beta \not \ni A$, thus a contradiction.
\end{proof}

\begin{proof}[Proof of Theorem \ref{thm:MNrule}]
Note that the largest (with respect to $\mathcal{P}$) composition in the interval of \eqref{eqn:sub1} is $C(\alpha)$ due to the fact that there is a $\Psi\in\mathfrak{S}_\Phi(\Phi)$ such that $\Psi$ is a strict composition thus $\rho_C(\Psi)=C(\alpha)$. Equivalently, the smallest composition in the interval comes from the opposite of a strict set composition which is $T(\alpha)$.

Throughout, we use subset notation for compositions and appeal to Remark \ref{rm:subsetTC}.  Lemma \ref{lem:djsets} implies that all intervals are disjoint, which means that in \eqref{eqn:sub1} the coefficient of $F_\beta$ is (up to sign) the number of $\sigma\in\mathfrak{S}_\Phi$ that results in the same $\rho_C(\sigma(\Phi))$, we denote this set as $\Sigma$. Fix $\beta$ and  $\bar{\sigma}$ such that $\beta\leq\rho_C(\bar{\sigma}(\Phi))$ and let $\Sigma = \{ \sigma \in \mathfrak{S}_\Phi : \bar{\sigma}(B_i)>\bar{\sigma}(B_{i+1})\iff\sigma(B_i)>\sigma(B_{i+1})\}$. Notice that according to $\widetilde{\mathcal{D}}$, if $B_i$ and $B_{i+1}$ are two different sizes and $B_i>B_{i+1}$, then $\sigma(B_i)>\sigma(B_{i+1})$, due to the fact that $\widetilde{\mathcal{D}}$ compares sizes. We observe that, in Algorithm 1, the conditions for whether lines 3, 5, or 7 happens at the $i$th iteration are equivalent to three conditions on $\bar{\sigma}(B_{i-1})$ and $\bar{\sigma}(B_i)$. For the first case, $|\bar{\sigma}(B_{i-1})|\neq|\bar{\sigma}(B_i)|$ if and only if $a_{i-1}\neq a_i$. In the second case, if $\bar{\sigma}(B_{i-1})<_\mathcal{D}\bar{\sigma}(B_i)$ where $|\bar{\sigma}(B_{i-1})|=|\bar{\sigma}(B_i)|$, then obviously $a_{i-1}=a_i$, but this implies that the composition $\rho_C(\bar{\sigma}(\Phi))$ contains $ A$ (written in subset notation). This implies $A\in\beta$, since $\rho_C(\bar{\sigma}(\Phi))\geq\beta$. Thus the $A$\textsuperscript{th} box is above the $(A+1)$\textsuperscript{st} box in the original ribbon of $\beta$ and since $R$ begins at the $(A+1)$\textsuperscript{st} box, there exists a $b_j$ such that $(b_j,\ldots,b_l)$ forms the ribbon $R$. For the last case, let $\bar{\sigma}(B_{i-1})>_\mathcal{D}\bar{\sigma}(B_i)$ where $|\bar{\sigma}(B_{i-1})|=|\bar{\sigma}(B_i)|$, then $A\notin\rho_T(\bar{\sigma}(\Phi))$ which means that $A\notin\beta$ because $\rho_T(\bar{\sigma}(\Phi))\leq \beta$. This implies that the $A$\textsuperscript{th} and $(A+1)$\textsuperscript{st} box are in the same row of the ribbon of $\beta$. Since $R$ starts at the $(A+1)$\textsuperscript{st} box, there does not exist a $b_j$ such that $(b_j,\ldots,b_l)$ forms the ribbon $R$.

Recall $\mathfrak{B}_{k}$, the set of all blocks $B$ of $\Phi$ such that $|B|=k$, then there is a natural bijection, $O$ from $\mathfrak{B}_k$ to the interval $[|\mathfrak{B}_k|]$, satisfying $B>_{\tilde{\mathcal{D}}}B'$ if and only if $O(B)>O(B')$. Thus $\Sigma$ is also enumerated by the number of standard fillings of $D(\beta,\rho(\bar{\sigma}(\Phi)))$ where the entries of a filling, when read from left to right, are increasing (due to the alternate condition of RTa2) and, when read from top to bottom, are decreasing (due to the alternate condition of RTa3). Thus $|\Sigma|=\sdr(\beta,\alpha)$.

Finally the M\"obius function has a $(-1)^{\len(\beta)-\len(\rho_C(\sigma(\Phi))}$ factor and we will show by induction that $(-1)^{\hgt(\beta,\rho(\sigma(\Phi)))}=(-1)^{\len(\beta)-\len(\rho_C(\sigma(\Phi))}$ when $\rho_T(\sigma(\Phi))\leq\beta\leq\rho_C(\sigma(\Phi))$. Consider the base case when $\beta = \rho_C(\sigma(\Phi))$. Then $\rho(\sigma(\Phi))$ refines $\rho_C(\sigma(\Phi))$ so $\hgt(\rho_C(\sigma(\Phi)),\rho(\sigma(\Phi)))=0$ and obviously $\len(\rho_C(\sigma(\Phi))-\len(\rho_C(\sigma(\Phi))=0$. Now suppose that $\hgt(\beta,\rho(\sigma(\Phi)))= \len(\beta)-\len(\rho_C(\sigma(\Phi))) = t$. Let $\len(\gamma) = \len(\beta)+1$ where $\rho_T(\sigma(\Phi))\leq\gamma<\beta\leq\rho_C(\sigma(\Phi))$, then there exists an element $B$ such that $B\notin\beta$ and $B \in \gamma$, since $\rho_T(\sigma(\Phi))\leq\gamma$, $B\notin D$. Let $A$ be maximal such that $A<B$ and $a_1+\cdots+a_k=A$. Then suppose that in the $k$th iteration of RT for $\beta$ the height equals $s$, then in the $k$th iteration of RT for $\gamma$, the height equals $ s+1 $ because the $i$th box is the end of a row of the ribbon $\gamma$. Thus at the final iteration we will get $\hgt(\gamma,\rho(\sigma(\Phi)))= t+1$. Obviously $\len(\gamma)-\len(\rho_C(\sigma(\Phi))) = t+1$, thus ${\hgt(\gamma,\rho(\sigma(\Phi)))}={\len(\gamma)-\len(\rho_C(\sigma(\Phi))}$.
\end{proof}

\section{Generalizations of $P_\Phi$ Using Total Orders} \label{sec:NCQtotord}
With careful work we can generalize a $\ldd$ filling by replacing $>_{\tilde{\mathcal{D}}}$ by a different total ordering $\rhd$ on disjoint integer sets. The resulting powersum $P^{\rhd}$ refines the symmetric powersums in non-commuting variables. Only certain total orders $\rhd$ induce $P^{\rhd}$ that have a shifted shuffle product and a standardized deconcatenate coproduct. 

\begin{definition} \label{def:invtotord}
A total order $\rhd$ on disjoint sets $A$ and $B$ is \textit{shift-invariant} if it has the property that if $A\rhd B$, then $A^{\uparrow k}\rhd B^{\uparrow k}$ for all $k$. A total order $\rhd$ is \textit{standard-invariant} if it has the property that if $A\rhd B $, then $A^\downarrow \rhd B^\downarrow$.
\end{definition}

Studying carefully Definition \ref{def:wposet} and the proofs of Theorem \ref{thm:NCQcoprod} and \ref{thm:prod}, we see that the only crucial properties of $>_{\widetilde{\mathcal{D}}}$ used are its shift and standard invariance. We conclude:

\begin{proposition}
If the total order $\rhd$ is shift-invariant, then $P^\rhd$ has a shifted shuffle product. Likewise, if $\rhd$ is standard-invariant, then $P^\rhd$ has a standardized deconcatenation coproduct.
\end{proposition}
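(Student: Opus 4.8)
The plan is to repeat the arguments of Theorem~\ref{thm:prod} and Theorem~\ref{thm:NCQcoprod} essentially verbatim, with $>_{\widetilde{\mathcal{D}}}$ replaced throughout by $\rhd$, and to isolate the precise two points at which a special property of $>_{\widetilde{\mathcal{D}}}$ is invoked. Write $\ldd^\rhd(\Phi)$ for the set of fillings obtained from Definition~\ref{def:ldd} using $\rhd$ in place of $>_{\widetilde{D}}$, and $P^\rhd_\Phi$ for the associated function as in Definition~\ref{def:wposet}. Conditions (1) and (2) of Definition~\ref{def:ldd} force the ``diagonal'' shape and hence the uniqueness of a filling given its row reading and column sum; these do not mention the order at all, so that part of the machinery carries over unchanged, as does the fact that $P^\rhd_\Phi$ expands unitriangularly in the monomial basis. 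One preliminary observation used repeatedly: since $\rhd$ is a \emph{total} order on disjoint sets and both shifting and standardizing are injective on the sets in question, the implications in Definition~\ref{def:invtotord} upgrade to biconditionals, i.e. shift-invariance gives $A\rhd B \iff A^{\uparrow k}\rhd B^{\uparrow k}$ and standard-invariance gives $A\rhd B \iff A^{\downarrow}\rhd B^{\downarrow}$.

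For the product: following the proof of Theorem~\ref{thm:prod}, I would first check that $\mathsf{F}_2\in\ldd^\rhd(\Psi)$ implies $\mathsf{F}_2^{\uparrow n}\in\ldd^\rhd(\Psi^{\uparrow n})$. Two blocks $B_i,B_{i+1}$ of $\Psi$ lie in the same column of $\mathsf{F}_2$ iff $B_i\rhd B_{i+1}$, and by shift-invariance this holds iff $B_i^{\uparrow n}\rhd B_{i+1}^{\uparrow n}$, so $\mathsf{F}_2^{\uparrow n}$ is an $\ldd^\rhd$ filling. With this, the two set-equalities in that proof go through: a quasishuffle of two $\ldd^\rhd$ fillings is an $\ldd^\rhd$ filling (within each merged column the entries are $\rhd$-sorted, which is consistent with the shuffled row order because both constituent columns were already $\rhd$-sorted), and conversely, deleting from an $\mathsf{F}\in\ldd^\rhd(\Phi\shuffle\Psi^{\uparrow n})$ all entries whose integers exceed $n$ (resp. are $\le n$) and erasing empty rows and columns returns fillings with row readings $\Phi$ (resp. $\Psi^{\uparrow n}$) that are again $\ldd^\rhd$; here one reuses that the column index is a weakly increasing step function of the row index, so the ``descending divisions'' restrict correctly, and only the biconditional form of shift-invariance is needed to recognize the restricted fillings as $\ldd^\rhd$. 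Since the coefficient is always $0$ or $1$, the stated product formula results.

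For the coproduct: following the proof of Theorem~\ref{thm:NCQcoprod}, deconcatenate $\mathsf{F}=\mathsf{F}_1\mid\mathsf{F}_2$ by a vertical cut and fix the degree $m$ of $\col(\mathsf{F}_1)$. One must show that as $\mathsf{F}$ ranges over $\ldd^\rhd(\Phi)$ with this left-degree, the standardized left pieces $\mathsf{F}_1^{\downarrow}$ range exactly over $\ldd^\rhd((B_1\mid\cdots\mid B_i)^{\downarrow})$, and symmetrically on the right. This is precisely where standard-invariance enters: for two blocks appearing among the first $i$ blocks, $B_i\rhd B_{i+1}$ iff $B_i^{\downarrow}\rhd B_{i+1}^{\downarrow}$, so ``being in the same column'' is preserved under $(\cdot)^{\downarrow}$, and the set of left pieces is $\ldd^\rhd$ of the standardized prefix. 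The remaining bookkeeping is identical to Theorem~\ref{thm:NCQcoprod}, giving the standardized deconcatenation coproduct.

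The main obstacle is not any single computation but the verification that shift- and standard-invariance really are the \emph{only} features of $>_{\widetilde{\mathcal{D}}}$ that the proofs of Theorems~\ref{thm:prod} and~\ref{thm:NCQcoprod} exploit: one must audit both arguments line by line to confirm that nothing else is secretly used, for instance the specific ``compare cardinalities, then compare minima'' recipe, or any interaction between $\rhd$ and the matrix geometry beyond the slogan ``same column $\iff$ related by $\rhd$''. A careful reading shows the only such uses are the two places flagged above, after which both formulas follow mutatis mutandis.
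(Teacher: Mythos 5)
Your proposal is correct and follows exactly the route the paper takes: the paper's entire ``proof'' is the one-sentence observation that the arguments for Theorems \ref{thm:prod} and \ref{thm:NCQcoprod} use only the shift- and standard-invariance of $>_{\widetilde{\mathcal{D}}}$, and you have simply carried out that audit in more detail (correctly locating the two places where each invariance enters, and noting that totality upgrades the definitional implications to biconditionals). If anything, your version is more careful than the paper's.
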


\begin{example}
Let $>_{Med}$ be the total order on disjoint sets $A$ and $B$ by the covering relation
\[
A>_{Med}B=\begin{cases}|A|>|B|& |A|\neq|B|\\ \med(A)>\med(B) & |A|\neq|B| \end{cases}
\]
where $\med(A)$ is the median (rounded up) of $A$. It follows that $>_{\med}$ is a shift-invariant and standard-invariant total order, which means that $P^\med$ has a shifted shuffle product and a standardized deconcatenate coproduct. An interesting question is whether a standard invariant total order is also shift invariant (and vice versa), we leave this question to the reader.
\end{example}

Let $B$ and $A$ be two disjoint sets of different sizes. We say that a total ordering \textit{projects under $\rho$} if there exists a total ordering on integers $\succ$, such that $B\rhd A$ implies $|B|\succ|A|$.

\begin{example}
Define the total order on sets $>_{\widetilde{M}}$ as $B>A$ if $\min(B)>\min(A)$. Then $>_{\widetilde{M}}$ does not project under $\rho$ because $\{3,4\}>_{\widetilde{M}}\{2\}$ and $\{2,4\}<_{\widetilde{M}}\{3\}$. This means that $2>_{M}1$ and $2<_M1$, however this implies that $<_{M}$ is not a total order. We note that $>_{\widetilde{M}}$ is a standard invariant and shift invariant total order.
\end{example}

Let $\Phi = (B_1,\ldots,B_k)$ be a set composition of $n$ with $\rho(\Phi)=(b_1,\ldots,b_k)$. Break $\rho(\Phi)$ when $B_i\rhd B_{i+1}$ so that $\rho(\Phi)=\beta_1|\beta_2|\cdots|\beta_l$ where $\beta_i=(b_{j_1},\ldots,b_{j_1+j_2})$ with  $B_{j_1-1}\lhd B_{j_1}\rhd B_{j_1+1}\rhd\cdots\rhd B_{j_1+j_2}\lhd B_{j_1+j_2+1}  $. Then we define $\rho_C^\rhd(\Phi)=(|\beta_1|,|\beta_2|,\cdots,|\beta_l|)$ and $\rho_T^\rhd(\Phi)= \beta_1'|\beta_2'|\cdots|\beta_l'$, where $\beta_i'$ is the transpose of $\beta_i$.

\begin{theorem}\label{thm:totpro}
Let $\rhd$ be a total order that projects to $\prec$. Then
\begin{itemize}
    \item $P_\alpha^\prec = \sum_{\sigma\in\mathfrak{S}_\Phi}\rho(P_{\sigma(\Phi)}^\rhd)$ where $\alpha = \rho(\Phi)$,
    \item $\rho(P_\Phi^\rhd)=\sum_{\rho_T^\rhd(\Phi)\leq\alpha\leq\rho_C^\rhd(\Phi)}\mu_{\mathcal{P}}(\alpha,\rho_C^\rhd(\Phi))F_\alpha$.
\end{itemize}
\end{theorem}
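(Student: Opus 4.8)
The plan is to establish the two bullets by retracing, respectively, the proof of Theorem~\ref{thm:toqsym} and the proof of Theorem~\ref{thm:pwrsmim}, with $>_{\widetilde{\mathcal{D}}}$ replaced by $\rhd$, with $\leq$ on integer compositions replaced by $\leq_\prec$, and with the auxiliary objects $\sld$, $\sd$, $\rho_C$, $\rho_T$, $\widetilde{\mathcal{D}}$ replaced by $\sld^\rhd$, $\sd^\prec$, $\rho^\rhd_C$, $\rho^\rhd_T$, $\widetilde{\mathcal{D}}^\rhd$. The only genuinely new ingredient is the hypothesis that $\rhd$ projects to $\prec$: this is exactly what guarantees that $\rho$ carries the $\rhd$-cover structure on $\rhd$-strict set compositions to the $\prec$-cover structure on integer compositions. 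Once that compatibility is in hand, both arguments go through essentially verbatim, since neither the coset bookkeeping in Theorem~\ref{thm:toqsym} nor the M\"obius/Boolean-lattice computation in Theorem~\ref{thm:pwrsmim} uses any property of the ambient order beyond its being total.

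For the first bullet, I would first reduce to the case where $\Phi$ is $\rhd$-strict, using that $\mathfrak{S}_\Phi$ acts transitively on the orbit $\mathfrak{S}_\Phi\Phi$, which always contains a $\rhd$-strict representative (sort the blocks of each fixed size into $\rhd$-decreasing order). Expanding $\rho\bigl(\sum_{\sigma}P^\rhd_{\sigma(\Phi)}\bigr)$ through Definition~\ref{def:WPfill} and collecting terms exactly as in the proof of Theorem~\ref{thm:toqsym}, one rewrites the sum as $\sum_{\bar{\mathsf{F}}\in\sld^\rhd(\Phi)}|\mathfrak{S}_{\bar{\mathsf{F}}}|\,M_{\rho(\col(\bar{\mathsf{F}}))}$: each $\ldd^\rhd$ filling of $\sigma(\Phi)$ equals $\sigma(\bar{\mathsf{F}})$ for the unique $\sld^\rhd$ filling $\bar{\mathsf{F}}$ in its $\mathfrak{S}_\Phi$-orbit with $\sigma\in\mathfrak{S}_{\bar{\mathsf{F}}}$, and $\rho$ is constant on $\mathfrak{S}_\Phi$-orbits. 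The decisive step is then the claim that, for $\rhd$-strict $\Phi$, replacing each block $B_i$ by $|B_i|$ is a bijection $\sld^\rhd(\Phi)\to\sd^\prec(\alpha)$; this reduces to the equivalence $B_i\rhd B_{i+1}\iff|B_i|\succeq|B_{i+1}|$ for $\rhd$-strict $\Phi$ --- when $|B_i|=|B_{i+1}|$ this is forced by strictness, and when $|B_i|\neq|B_{i+1}|$ both implications are immediate from the projection hypothesis together with the totality of $\succ$. Finally, since $|\mathfrak{S}_{\bar{\mathsf{F}}}|$ depends only on the block sizes (the derivation of \eqref{eqn:setsize} uses only that $\rhd$ is total), it equals $|\mathfrak{S}_{\widetilde{\mathsf{F}}}|$ for the image filling $\widetilde{\mathsf{F}}\in\sd^\prec(\alpha)$, and the sum collapses to $\sum_{\widetilde{\mathsf{F}}\in\sd^\prec(\alpha)}|\mathfrak{S}_{\widetilde{\mathsf{F}}}|M_{\col(\widetilde{\mathsf{F}})}=P^\prec_\alpha$ by the $\prec$-analogue of \eqref{eqn:PtoMSD}.

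For the second bullet, the first step is to check that $\rho(P^\rhd_\Phi)=\sum_{\rho(\Phi)\leq\gamma\leq\rho^\rhd_C(\Phi)}M_\gamma$. This follows once one verifies that $\rho$ restricts to an order isomorphism from $[\Phi,C^\rhd(\Phi)]$ in $\widetilde{\mathcal{D}}^\rhd$ onto $[\rho(\Phi),\rho^\rhd_C(\Phi)]$ in $\mathcal{P}$: an element of the former is determined by the subset of ``descending gaps'' it merges, an element of the latter by the complementary subset, and in subset notation $\rho(\Phi)=\rho^\rhd_C(\Phi)\cup D$ and $\rho^\rhd_T(\Phi)=\rho^\rhd_C(\Phi)\cup D^\complement$ (the $\rhd$-analogue of Remark~\ref{rm:subsetTC}), so $\rho$ matches the two indexings bijectively; the projection hypothesis is again what makes $C^\rhd(\Phi)$, and hence this interval, behave like its integer counterpart. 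From here the argument is word-for-word the proof of Theorem~\ref{thm:pwrsmim}: expand each $M_\gamma$ in the fundamental basis, collect the coefficient of $F_\beta$ as $\sum_{\gamma\in[\rho(\Phi)\wedge\beta,\,\rho^\rhd_C(\Phi)]}\mu_{\mathcal{P}}(\beta,\gamma)$, use that this interval is a Boolean lattice to see the sum vanishes unless $\rho(\Phi)\wedge\beta=\rho^\rhd_C(\Phi)$, and in the surviving case read off the range of $\beta$ as $[\rho^\rhd_T(\Phi),\rho^\rhd_C(\Phi)]$ from the subset description above. No property of $\rhd$ enters this half of the argument.

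I expect the main obstacle to be packaging the structural facts that the projection hypothesis buys us: chiefly the equivalence $B_i\rhd B_{i+1}\iff|B_i|\succeq|B_{i+1}|$ on $\rhd$-strict set compositions (which underlies the bijection $\sld^\rhd(\Phi)\cong\sd^\prec(\alpha)$), and the well-definedness of $C^\rhd(\Phi)$ together with the Boolean interval structure of $[\Phi,C^\rhd(\Phi)]$ and its image under $\rho$. Everything past these points is a transcription of the proofs of Theorems~\ref{thm:toqsym} and~\ref{thm:pwrsmim}.
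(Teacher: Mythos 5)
Your proposal is correct and follows exactly the route the paper takes: the paper's own ``proof'' is the single remark that the result follows from the proofs of Theorems~\ref{thm:toqsym} and~\ref{thm:pwrsmim} by changing the total order. Your write-up is in fact more careful than the paper's, since you explicitly isolate where the projection hypothesis is needed (the equivalence $B_i\rhd B_{i+1}\iff|B_i|\succeq|B_{i+1}|$ on $\rhd$-strict set compositions, which drives the bijection with $\sd^\prec(\alpha)$) and correctly observe that the second bullet is essentially order-agnostic.
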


This follows from the proofs of Theorems \ref{thm:toqsym} and \ref{thm:pwrsmim} by changing the total order.

\begin{proof}[Proof of Theorem \ref{thm:totcob}]
The proof in Section \ref{sec:proof} requires Lemma \ref{lem:djsets}, Theorems \ref{thm:toqsym} and \ref{thm:pwrsmim}, and for the total order $\rhd$ to project under $\rho$ to $\prec$. We define a total order on set compositions, $\rhd$, that projects to $\prec$ by 

\[
A\rhd B = \begin{cases} |A|\prec |B| & \text{ if } |A|\neq |B|\\
\min(A)<\min(B) & \text{ if } |A|=|B|
\end{cases}.
\]

$\rhd$ is a standard invariant projective total order, thus a total order analogue of Lemma \ref{lem:djsets} and Theorem \ref{thm:totpro} holds. 
\end{proof}

In Section \ref{sec:faminv} we could naturally pair total orders (and thereby pairing powersums) using the star involution. Now we will show how to do this generally.

Recall that the \textit{algebraic complement involution}, denoted $\leftarrow$, sends $M_\Phi$ to $M_{\overleftarrow{\Phi}}$, where $\Phi=(B_1,\ldots,B_k)$ and $\overleftarrow{\Phi}=(B_k,\ldots,B_1)$ is the reverse set composition of $\Phi$.

\begin{theorem} \label{thm:alginv}
The algebraic complement of a quasipowersum is the reverse powersum
\begin{equation}
    \overleftarrow{P_\Phi^\rhd}=P_{\overleftarrow{\Phi}}^{\overleftarrow{\rhd}}
\end{equation}
where $\overleftarrow{\rhd}$ is the reverse of the total order $\rhd$.
\end{theorem}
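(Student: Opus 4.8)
The plan is to work directly from the subposet definition (Definition \ref{def:wposet}), since the algebraic complement involution $\overleftarrow{\phantom{M}}$ acts on the quasimonomial basis by reversing the set composition, and $P_\Phi^\rhd$ is a sum of $M_\Psi$ over an interval in the relevant subposet. First I would record that, by the total-order analogue of Definition \ref{def:wposet}, $P_\Phi^\rhd = \sum_{\Phi \leq_{\widetilde{\mathcal R}} \Psi} M_\Psi$, where $\widetilde{\mathcal R}$ is the subposet of $\widetilde{\mathcal P}$ whose cover relation merges $B_i, B_{i+1}$ only when $B_i \rhd B_{i+1}$. Applying $\overleftarrow{\phantom{M}}$ term-by-term gives $\overleftarrow{P_\Phi^\rhd} = \sum_{\Phi \leq_{\widetilde{\mathcal R}} \Psi} M_{\overleftarrow\Psi}$, so the whole statement reduces to the combinatorial claim that the map $\Psi \mapsto \overleftarrow\Psi$ is a bijection between the principal order filter of $\Phi$ in $\widetilde{\mathcal R}$ and the principal order filter of $\overleftarrow\Phi$ in $\widetilde{\mathcal R}^{\overleftarrow{\phantom{\rhd}}}$, where the latter subposet uses the reversed total order $\overleftarrow\rhd$.

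The key step is therefore to show $\Phi \leq_{\widetilde{\mathcal R}} \Psi$ if and only if $\overleftarrow\Phi \leq_{\widetilde{\mathcal R}^{\overleftarrow{\rhd}}} \overleftarrow\Psi$, and it suffices to check this on cover relations and then extend by transitivity (reversal is an involution on set compositions, so it is automatically a poset isomorphism once it respects covers in both directions). For a cover $\Phi \lessdot_{\widetilde{\mathcal R}} \Psi$: we merge adjacent blocks $B_i \rhd B_{i+1}$ in $\Phi = (B_1,\dots,B_k)$. In $\overleftarrow\Phi = (B_k,\dots,B_1)$ these two blocks sit in positions $k-i$ and $k-i+1$, now in the order $B_{i+1}$ then $B_i$, i.e. adjacent with the left one $\overleftarrow\rhd$-greater than the right one precisely because $B_i \rhd B_{i+1}$ means $B_{i+1} \mathrel{\overleftarrow\rhd} B_i$. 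Merging them produces exactly $\overleftarrow\Psi$, so $\overleftarrow\Phi \lessdot_{\widetilde{\mathcal R}^{\overleftarrow\rhd}} \overleftarrow\Psi$. The converse is identical since $\overleftarrow{\overleftarrow\rhd} = \rhd$ and $\overleftarrow{\overleftarrow\Phi} = \Phi$.

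Finally I would assemble: $\overleftarrow{P_\Phi^\rhd} = \sum_{\Phi \leq_{\widetilde{\mathcal R}} \Psi} M_{\overleftarrow\Psi} = \sum_{\overleftarrow\Phi \leq_{\widetilde{\mathcal R}^{\overleftarrow\rhd}} \Xi} M_{\Xi} = P_{\overleftarrow\Phi}^{\overleftarrow\rhd}$, reindexing by $\Xi = \overleftarrow\Psi$ using the bijection just established. I do not expect a genuine obstacle here; the only point requiring a little care is making sure the underlying refinement poset $\widetilde{\mathcal P}$ itself is reversal-symmetric (merging $B_i \cup B_{i+1}$ in $\Phi$ corresponds to merging the same pair of adjacent blocks in $\overleftarrow\Phi$), which is immediate, and then checking that the restriction imposed by $\rhd$ transforms correctly under reversal — the one-line computation $B_i \rhd B_{i+1} \iff B_{i+1} \mathrel{\overleftarrow\rhd} B_i$ above. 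As a sanity check one can specialize $\rhd$ to the size order $>_{\widetilde{\mathcal D}}$ and recover the known behaviour of $\overleftarrow{P_\Phi}$, and further project along $\rho$ to recover part (1) of the preceding theorem in $\qsym$.
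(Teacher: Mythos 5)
Your proposal is correct and follows essentially the same route as the paper: both apply the complement term-by-term to the monomial expansion $P_\Phi^\rhd=\sum_{\Phi\leq\Psi}M_\Psi$ and then verify that reversal carries the cover relation ``merge $B_i,B_{i+1}$ when $B_i\rhd B_{i+1}$'' to the cover relation for $\overleftarrow{\rhd}$ on $\overleftarrow{\Phi}$, giving a bijection of order filters. Your write-up is in fact slightly more careful than the paper's, which only states the cover-relation correspondence without spelling out the extension by transitivity.
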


\begin{proof}
By definition, $\overleftarrow{P_{\Phi}^\rhd}=\overleftarrow{\sum_{\Psi\rhd\Phi}M_\Psi}=\sum_{\Psi\rhd\Phi}M_{\overleftarrow{\Psi}}$. Denote the reverse composition as $\overleftarrow{\Phi}=A_1|\cdots|A_k$. Note that for blocks $B_i,B_{i+1}$ in $\Phi$, if $B_i\rhd B_{i+1}$, then $\overleftarrow{B_i}\rhd\overleftarrow{B_{i+1}}$, which means that 
$A_{k+1-i}\lhd A_{k+1-i+1}$, which is rewritten as $A_{k+1-i+1}\overleftarrow{\rhd} A_{k+1-i}$. Thus,  $\sum_{\Psi\rhd\Phi}M_{\overleftarrow{\Psi}}=\sum_{\overleftarrow{\Phi}\overleftarrow{\rhd}\Psi}M_\Psi=P_{\overleftarrow{\Phi}}^{\overleftarrow{\rhd}}$.
\end{proof}

Let $\Phi$ be a set composition of $[n]$. Recall that the \textit{coalgebraic complement involution} is defined as the map $\overline{M_\Phi}=M_{\overline{\Phi}}$, where $\overline{\Phi}=(\overline{B_1},\ldots,\overline{B_k})$ and $\overline{B_i}=\{ n+1-b_{i_1},\ldots,n+1-b_{i_j} \} $. For example, $\overline{M_{13|2|45}}=M_{35|4|12}$. Let $A,B$ be two disjoint subsets of $n$, we define the complement of a shift-invariant total order $\rhd$, denoted $\overline{\rhd}$, as 
\[
A \overline{\rhd} B = \begin{cases} \overline{A}\lhd\overline{B} & \text{ if }|A|=|B|\\ A\rhd B & \text{ if } |A|\neq |B| \end{cases}.
\]

We similarly define the complement of shift-invariant total order $\rhd$ as $A\rhd B =\overline{A}\lhd\overline{B}$.

\begin{proposition}
If $\rhd$ projects to $\prec$ under $\rho$, then $\overline{\rhd}$ does as well. In other words complementation is an involution on such total orders.
\end{proposition}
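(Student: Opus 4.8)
The plan is to exploit the structural observation that the two-case definition of $\overline{\rhd}$ modifies $\rhd$ only on pairs of sets of \emph{equal} cardinality, whereas ``projecting under $\rho$'' is a condition purely about pairs of \emph{distinct} cardinality; hence projectivity transfers verbatim and $\overline{\rhd}$ projects to the \emph{same} total order $\prec$ on positive integers. The involution claim $\overline{\overline{\rhd}}=\rhd$ is then a short unwinding of the definition using that set-complement $A\mapsto\overline{A}$ is a cardinality-preserving involution.

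First I would restate projectivity in a form symmetric in $A$ and $B$: since $\rhd$ and $\prec$ are both total orders and cardinalities are assumed distinct, the one-sided implication ``$B\rhd A\Rightarrow|B|\succ|A|$'' is equivalent to saying that for all disjoint $A,B$ with $|A|\neq|B|$ the $\rhd$-comparison of $A$ and $B$ is pulled back via $|\cdot|$ from the $\prec$-comparison of $|A|$ and $|B|$. Now take disjoint $A,B$ with $|A|\neq|B|$; the defining formula for $\overline{\rhd}$ is in its second clause, so $A\mathbin{\overline{\rhd}}B$ holds exactly when $A\rhd B$ holds. Thus the $\overline{\rhd}$-comparison of $A$ and $B$ equals the $\rhd$-comparison, hence is governed by $\prec$ on $|A|,|B|$, so $\overline{\rhd}$ projects to the same $\prec$. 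For completeness I would also check that $\overline{\rhd}$ is a genuine total order: on each fixed cardinality class it is the conjugate of $\rhd$ by the cardinality-preserving bijection $A\mapsto\overline{A}$, hence a linear order there; across cardinality classes it agrees with $\rhd$, whose restriction there is pulled back from the total order $\prec$ on sizes, so transitivity among sets of three distinct sizes is automatic. Shift-invariance of $\rhd$ is what makes ``$\overline{A}\lhd\overline{B}$'' independent of the ambient $n$ used to form complements, as already noted when $\overline{\rhd}$ was introduced.

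For the involution statement I would first isolate the rule that, for $|X|=|Y|$, $X\mathbin{\overline{\blacktriangleright}}Y \Leftrightarrow \overline{Y}\blacktriangleright\overline{X}$ for any total order $\blacktriangleright$ (just unwinding $\overline{X}\lhd\overline{Y}$). Applying this with $\blacktriangleright=\rhd$ recovers the definition, and applying it twice, on a pair with $|A|=|B|$, gives
\[
A\mathbin{\overline{\overline{\rhd}}}B \;\Leftrightarrow\; \overline{B}\mathbin{\overline{\rhd}}\overline{A} \;\Leftrightarrow\; \overline{\overline{A}}\rhd\overline{\overline{B}} \;\Leftrightarrow\; A\rhd B,
\]
where the first equivalence is the rule with $\blacktriangleright=\overline{\rhd}$, the second is the rule with $\blacktriangleright=\rhd$ applied to the equal-size pair $\overline{B},\overline{A}$, and the third uses $\overline{\overline{(\cdot)}}=\mathrm{id}$ (from $n+1-(n+1-x)=x$). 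On distinct-cardinality pairs both $\overline{\rhd}$ and $\overline{\overline{\rhd}}$ agree with $\rhd$ outright. Hence $\overline{\overline{\rhd}}=\rhd$; together with the previous paragraph, complementation maps the set of $\rho$-projective (shift-invariant) total orders to itself and squares to the identity, i.e. is an involution on that set.

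I do not expect a genuine obstacle here: the argument is a direct case analysis. The only points needing care are the reversal conventions ($\rhd$ versus $\lhd$, $\prec$ versus $\succ$) and the verification that $\overline{\rhd}$ is well defined and transitive, which is why I would spell out the well-definedness remark rather than leave it implicit.
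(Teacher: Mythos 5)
Your proposal is correct and rests on the same observation as the paper's one-sentence justification: $\overline{\rhd}$ agrees with $\rhd$ on all pairs of distinct cardinality, and projectivity under $\rho$ concerns only such pairs, so it transfers verbatim. Your additional verifications --- that $\overline{\rhd}$ is a genuine total order and that $\overline{\overline{\rhd}}=\rhd$ via the identity $n+1-(n+1-x)=x$ --- are sound and make explicit the ``involution'' clause that the paper leaves implicit.
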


This follows from the definition that $A\overline{\rhd}B$ when $A\rhd B$. This goes to say that if a total order $\rhd$ compares to disjoint sets based off the sets' size then the complement total order will still compare the sets by using the sets' size.

\begin{theorem}
Let $\Phi$ be a set composition of $[n]$. If $\rhd$ is a total order on set compositions, then
\[
\overline{P_\Phi^\rhd}=P_{\overline{\Phi}}^{\overline{\rhd}}
\]
where $\overline{\rhd}$ is the complement of the total order $\rhd$.
\end{theorem}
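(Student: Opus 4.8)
The plan is to mirror the proof of Theorem~\ref{thm:alginv} almost verbatim, replacing the reversal of a set composition by the complementation $B_i\mapsto\overline{B_i}$ and invoking the fact that the coalgebraic complement involution acts on the monomial basis by $\overline{M_\Phi}=M_{\overline\Phi}$. First I would start from Definition~\ref{def:wposet}, writing
\[
\overline{P_\Phi^\rhd}=\overline{\sum_{\Phi\leq_{\widetilde{\mathcal{D}}}^\rhd\Psi}M_\Psi}=\sum_{\Phi\leq_{\widetilde{\mathcal{D}}}^\rhd\Psi}M_{\overline\Psi},
\]
so that the whole argument reduces to understanding how the covering relation $\lessdot_{\widetilde{\mathcal{D}}}^\rhd$ behaves under the block-by-block map $\Psi\mapsto\overline\Psi$. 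Since complementation acts blockwise and fixes the \emph{positions} of the blocks (unlike reversal, which flips them), the key observation is simply that $B_i\rhd B_{i+1}$ holds if and only if $\overline{B_i}\;\overline\rhd\;\overline{B_{i+1}}$: this is immediate from the definition of $\overline\rhd$, splitting on whether $|B_i|=|B_{i+1}|$ (in which case $\overline\rhd$ is defined by $\overline{B_i}\lhd\overline{B_{i+1}}$ on the complements, i.e.\ exactly by $B_i\rhd B_{i+1}$ after applying complementation twice) or $|B_i|\neq|B_{i+1}|$ (in which case $\overline\rhd$ agrees with $\rhd$ and both sides compare sizes).

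Next I would lift this to the cover relation and then to the order: $\Phi\lessdot_{\widetilde{\mathcal{D}}}^\rhd\Psi$ means $\Psi$ is obtained from $\Phi$ by merging $B_i\cup B_{i+1}$ with $B_i\rhd B_{i+1}$, and since complementation is a bijection on $[n]$ it sends $B_i\cup B_{i+1}$ to $\overline{B_i}\cup\overline{B_{i+1}}$ and, by the previous paragraph's equivalence, turns this into a legal merge for $\overline\rhd$. Hence $\Phi\leq_{\widetilde{\mathcal{D}}}^\rhd\Psi$ if and only if $\overline\Phi\leq_{\widetilde{\mathcal{D}}}^{\overline\rhd}\overline\Psi$, and because $\Psi\mapsto\overline\Psi$ is an involution it is in particular a bijection on the set of $\Psi$ with $\Phi\leq_{\widetilde{\mathcal{D}}}^\rhd\Psi$ onto the set of $\Psi'$ with $\overline\Phi\leq_{\widetilde{\mathcal{D}}}^{\overline\rhd}\Psi'$. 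Substituting $\Psi'=\overline\Psi$ in the sum above then gives
\[
\sum_{\Phi\leq_{\widetilde{\mathcal{D}}}^\rhd\Psi}M_{\overline\Psi}=\sum_{\overline\Phi\leq_{\widetilde{\mathcal{D}}}^{\overline\rhd}\Psi'}M_{\Psi'}=P_{\overline\Phi}^{\overline\rhd},
\]
which is the claim.

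The only genuine subtlety, and the step I expect to take the most care, is bookkeeping around the double complementation: $\overline\rhd$ is itself \emph{defined} in terms of the complements, so when checking $B_i\rhd B_{i+1}\iff \overline{B_i}\,\overline\rhd\,\overline{B_{i+1}}$ one must remember that $\overline{\overline{B}}=B$ and that $\overline\rhd$ is only defined for shift-invariant $\rhd$ (which should be stated as a running hypothesis, matching the surrounding development). One should also note, as in the remark after the definition of $\overline\rhd$, that the size-based case is self-consistent precisely because complementation preserves cardinalities, so $\overline\rhd$ really is a total order and really does project under $\rho$ when $\rhd$ does --- though projectivity is not needed for this particular theorem, only the involutivity and blockwise-bijectivity of complementation together with the size-preservation are. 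Everything else is a direct transcription of the proof of Theorem~\ref{thm:alginv}.
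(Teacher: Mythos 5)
Your overall strategy is the same as the paper's: expand $P^{\rhd}_{\Phi}$ in the monomial basis, observe that complementation acts blockwise and preserves the positions of the blocks, reduce everything to a blockwise statement about adjacent descents, and reindex the sum. The paper's own proof has exactly this skeleton (split, additionally, into projective and non-projective cases, a split you dispense with). So there is no disagreement about the route.

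The difficulty is that the one step you label ``immediate from the definition'' is where all the content sits, and as the definition is literally printed it comes out with the wrong sign. You need $B_i\rhd B_{i+1}\iff \overline{B_i}\,\overline{\rhd}\,\overline{B_{i+1}}$. The paper defines, for $|A|=|B|$, that $A\,\overline{\rhd}\,B$ holds iff $\overline{A}\lhd\overline{B}$; substituting $A=\overline{B_i}$, $B=\overline{B_{i+1}}$ and using $\overline{\overline{B}}=B$ gives $\overline{B_i}\,\overline{\rhd}\,\overline{B_{i+1}}\iff B_i\lhd B_{i+1}$, which for a total order on distinct sets is the \emph{negation} of what you want. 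Concretely, take $n=4$, $\rhd\,=\,>_{\widetilde{D}}$ and $\Phi=13|24$: then $\{1,3\}>_{\widetilde{D}}\{2,4\}$, so $\overline{P^{\rhd}_{\Phi}}=M_{24|13}+M_{1234}$, while $\overline{\Phi}=24|13$ and, under the printed definition, $\{2,4\}\,\overline{\rhd}\,\{1,3\}$ fails, so $P^{\overline{\rhd}}_{\overline{\Phi}}=M_{24|13}$. The equivalence (and the theorem) does hold if one reads the complement order as $A\,\overline{\rhd}\,B\iff\overline{A}\rhd\overline{B}$, with no reversal, and that is evidently what is intended; your write-up should say this explicitly rather than appeal to the printed definition. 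To be fair, the paper's proof stumbles at the same spot: it asserts that $B_i\rhd B_{i+1}$ implies $\overline{B_i}\lhd\overline{B_{i+1}}$, which already fails for $>_{\widetilde{D}}$ with $B_i=\{1,4\}$, $B_{i+1}=\{2,3\}$. One smaller point: in the unequal-size branch the definition sets $A\,\overline{\rhd}\,B\iff A\rhd B$, so to conclude $\overline{B_i}\,\overline{\rhd}\,\overline{B_{i+1}}\iff B_i\rhd B_{i+1}$ there you do need $\rhd$ to compare unequal-size blocks by size alone, i.e.\ projectivity (or some complement-invariance of $\rhd$ on such pairs); this is why the paper treats the non-projective case separately with a different definition of $\overline{\rhd}$, and your remark that projectivity is not needed for this theorem is too quick.
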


\begin{proof}
Let $\Phi=(B_1,\ldots,B_k)$. This proof will be done in two cases, when $\rhd$ is projective, and when $\rhd$ is not projective. However for both cases $\overline{P_\Phi^\rhd}=\overline{\sum_{\Psi\rhd\Phi}M_\Psi}=\sum_{\Psi\rhd\Phi}M_{\overline{\Psi}}$.

Let $\rhd$ be projective. If $B_i\rhd B_{i+1}$, then $\overline{B_i}\lhd\overline{B_{i+1}}$ when $|B_i|=|B_{i+1}|$, which means that $B_i\overline{\rhd}B_{i+1}$. And when $|B_i|\neq|B_{i+1}$, if $B_i\rhd B_{i+1}$, then $B_i\overline{\rhd} B_{i+1}$. Thus it is equivalent to write $\sum_{\Psi\rhd\Phi}M_{\overline{\Psi}}=\sum_{\Psi\overline{\rhd}\overline{\Phi}}M_{\Psi}=P_{\overline{\Phi}}^{\overline{\rhd}}$.

Finally if $\rhd$ is not projective, then If $B_i\rhd B_{i+1}$, then $\overline{B_i}\lhd\overline{B_{i+1}}$ for all $B_i$.
Thus $\sum_{\Psi\rhd\Phi}M_{\overline{\Psi}}=\sum_{\Psi\overline{\rhd}\overline{\Phi}}M_{\Psi}=P_{\overline{\Phi}}^{\overline{\rhd}}$.
\end{proof}

\section{Powersums in Other Algebras} \label{sec:othralg}

In \cite{bergeron2021hopf}, the authors define a way to define a monomial basis for a combinatorial Hopf algebra given certain conditions. In some of the examples below we sketch the relationship that $P$ has with other combinatorial algebras using fillings and the monomial basis.

\subsection{$\fqsym$}
$\fqsym$ is the Hopf algebra of permutations first introduced by Malvenuto and Reutenauer \cite{malvenuto1995duality}, which is a subalgebra of $\ncqsym$ \cite[Section 3]{duchamp2002noncommutative} \cite[Section 2.4]{novelli2010combinatorial}. The image of $\fqsym$ on NCQSym is the powersums basis indexed by singleton set compositions. To show this we recall the Hopf algebra of Word Quasisymmetric functions (WQSym), which is indexed by packed words and is isomorphic to NCQSym.

A packed word of $n$ is a word $w = w_1\cdots w_n$ such that for any $w_i$ either the letter $w_i-1$ is in $w$ or $w_i=1$. Then packing a word $u$, $pack(u)$, is the packed word $w$ such that all letters keep the same relative order as $u$. The space of WQSym is spanned by the monomial basis where $M_w = \sum_{pack(u)=w}u$. The map $\setcomp(w)$ bijects packed words to a set composition by $w= w_1\cdots w_n\to\Phi = B_1|\cdots|B_k$ where $i\in B_{w_i}$. The map $\setcomp:M_{w}\to M_{\setcomp(w)}$ is a Hopf isomorphism between WQSym and NCQSym.

The standardization of a word, $st(w)$ (which is not to be confused with standardization of a set composition $\Phi^\downarrow$), is the permutation obtained from reading the word from left to write and label 1, 2, ... when $w_i=1$ and repeat for $w_i = 2$ and so on. For example $132$ and $121$ are both packed words with $st(132)=st(121) = 132$. Let $\Phi = (B_1,\ldots,B_k)$ and $\Psi = (A_1,\ldots,A_l)$ be a set composition and a singleton set composition respectively, $st(\Phi) =\Psi$ means that for every $i$, $B_i = A_j\cup\cdots\cup A_{j+j'}$ where $A_j<\cdots<A_{j+j'}$ where $<$ compares the only integer in the block.

Explicitly, FQSym is a sub Hopf algebra of WQSym by the map $\mathcal{G}_\tau\to\sum_{st(u)=\tau} M_u$ where 
$\mathcal{G}_\tau$ is the dual fundamental basis and $\tau$ is a permutation. Thus for NCQSym, $\mathcal{G}_\tau\to\sum_{st(\Phi)=\tau(1|\cdots|n)} M_\Phi$. We note that the $\Phi$ in this sum are precisely the set compositions satisfying $\Phi \geq_{\widetilde{\mathcal{D}}} \tau(1|\cdots|n)$, thus by Definition \ref{def:wposet}, 
\begin{theorem} \label{thm:fima}
The image of the $\mathcal{G}$ basis on $\ncqsym$ is the quasisymmetric powersum function in non-commuting variables indexed by a singleton set composition, i.e. for a permutation $\tau$
\begin{equation}\label{eqn:fima}
    \mathcal{G}_\tau = P_{\tau(1|2|\cdots|n)}
\end{equation}
\end{theorem}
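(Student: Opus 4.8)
The plan is to expand both sides in the quasimonomial basis $M_\Phi$ of $\ncqsym$ and check that the two index sets coincide. Write $\Psi_0 = \tau(1|2|\cdots|n)$; this is a singleton set composition whose $i$-th block is $\{\tau^{-1}(i)\}$, so reading off its entries recovers the permutation word $c_1c_2\cdots c_n$ with $c_i = \tau^{-1}(i)$. By the embedding $\fqsym \hookrightarrow \mathrm{WQSym} \cong \ncqsym$ recalled just above, $\mathcal{G}_\tau$ is carried to $\sum_{\Phi} M_\Phi$ summed over set compositions $\Phi$ with $st(\Phi) = \Psi_0$. On the other side, \eqref{eqn:Pldd} gives $P_{\Psi_0} = \sum_{\mathsf{F}\in\ldd(\Psi_0)} M_{\col(\mathsf{F})}$, and since an $\ldd$ filling is determined by its row reading together with its column sum, the map $\mathsf{F}\mapsto\col(\mathsf{F})$ identifies $\ldd(\Psi_0)$ with $\{\col(\mathsf{F}) : \mathsf{F}\in\ldd(\Psi_0)\}$. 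Hence the theorem reduces to the set equality
\[
\{\,\Phi : st(\Phi)=\Psi_0\,\} \;=\; \{\,\col(\mathsf{F}) : \mathsf{F}\in\ldd(\Psi_0)\,\}.
\]

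I would prove this by showing that each side equals the family of coarsenings of $\Psi_0$ into \emph{ascending runs}: those set compositions each of whose blocks is a union $\{c_p\}\cup\{c_{p+1}\}\cup\cdots\cup\{c_q\}$ of a contiguous segment of the blocks of $\Psi_0$ with $c_p < c_{p+1} < \cdots < c_q$. For the left side this is exactly the description of standardization recalled before the statement: $st(\Phi)=\Psi_0$ says precisely that every block of $\Phi$ is a union of consecutive blocks $A_j < \cdots < A_{j+j'}$ of $\Psi_0$. For the right side I would specialize Definition \ref{def:ldd} to $\Psi_0$: because every block of $\Psi_0$ is a singleton, the comparison $B_i >_{\widetilde{D}} B_{i+1}$ collapses to $\min B_i < \min B_{i+1}$, i.e.\ $c_i < c_{i+1}$. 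Thus in any $\mathsf{F}\in\ldd(\Psi_0)$ a block occupies the column of its predecessor exactly when the two entries ascend, so each column of $\mathsf{F}$ is a contiguous ascending run and $\col(\mathsf{F})$ is a coarsening of $\Psi_0$ into ascending runs; conversely, starting from any such coarsening and placing each run in its own column produces an $\ldd$ filling with that column sum (the breaks forced by Definition \ref{def:ldd} occur precisely at the descents $c_i > c_{i+1}$, and any subset of the ascents may additionally be used as breaks). This gives the displayed equality, and therefore $\mathcal{G}_\tau = \sum_{st(\Phi)=\Psi_0} M_\Phi = \sum_{\mathsf{F}\in\ldd(\Psi_0)} M_{\col(\mathsf{F})} = P_{\Psi_0} = P_{\tau(1|2|\cdots|n)}$. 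Equivalently, one may read off the right-hand side from Definition \ref{def:wposet}, noting that for a singleton set composition $\Psi_0$ the set compositions lying above $\Psi_0$ in $\widetilde{\mathcal{D}}$ are exactly these ascending-run coarsenings.

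The individual steps are routine; the one point demanding care is the combinatorial bookkeeping in the displayed set equality — keeping $\Psi_0$ fixed as the common all-singleton composition underlying both $\Phi$ and $\col(\mathsf{F})$, and matching the freedom in an $\ldd$ filling of $\Psi_0$ (a free binary choice at each ascent of $c_1\cdots c_n$, a forced break at each descent) against the freedom in choosing a $\Phi$ with $st(\Phi) = \Psi_0$. It is also worth verifying at the outset that $\tau(1|2|\cdots|n)$, rather than $\tau^{-1}(1|2|\cdots|n)$, is the correct index, i.e.\ that the singleton set composition attached to the permutation word $\tau$ under $\setcomp$ is $\tau(1|2|\cdots|n)$; once this and the ascending-run description are in place, the identity is immediate.
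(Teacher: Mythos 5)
Your argument is correct and follows essentially the same route as the paper: the paper's (very terse) proof also expands $\mathcal{G}_\tau$ as $\sum_{st(\Phi)=\tau(1|\cdots|n)}M_\Phi$ and then asserts that this index set is exactly $\{\Psi:\Psi\geq_{\widetilde{\mathcal{D}}}\tau(1|\cdots|n)\}$, invoking Definition \ref{def:wposet}. Your ascending-run analysis (descents of the word force column breaks, ascents give a free binary choice) is precisely the verification of that asserted set equality, carried out via the $\ldd$-fillings definition instead, so it supplies detail the paper leaves implicit rather than a different method.
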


\subsection{QSym$^r$}
Hivert defined the space of $r$-quasisymmetric functions, denoted $\qsym^r$ where $r$ is some nonnegative integer, as the invariant space of certain local actions in \cite{hivert2004local}. A local action is a permutation from $\mathfrak{S}_n$, where $n$ is the number of variables, that acts on the polynomial ring by 
\begin{equation} \label{eqn:lclact}
\sigma_i(x_i^a x_{i+1}^b)=
\begin{cases} 
x_i^b x_{i+1}^a & \text{if }a<r \text{ or } b<r\\
x_i^a x_{i+1}^b & \text{else}
\end{cases}
\end{equation}
 
The spaces of $r$-quasisymmetric functions are nested sub Hopf algebras of $\mathbb{Q}[X]$, in other words $\mathbb{Q}[X] =\qsym^0 \supset \qsym=\qsym^1 \supset \qsym^2 \supset \ldots \supset \qsym^r \supset \ldots \supset \sym $.

Let $\alpha=(a_1,\ldots,a_k)$ be a composition of $n$ such that $a_i\geq r$ and let $\lambda=(\lambda_1,\ldots,\lambda_l)$ be a partition of $m$ such that $\lambda_i< r$. Then we define an \textit{$r$-composition} of $n+m$ as the pairing of $\alpha$ and $\lambda$ denoted as $(\alpha;\lambda)$. Let $\beta$ be a composition of $n$, then $\rsrt(\beta)$ is the $r$-composition that comes from moving all parts less than $r$ to the end. For example $\sort_3((3,5,2,1,3,4,5,2))=(3,5,3,4,5;2,2,1)$.

The $r$-quasimonomial function, denoted as $M_{\alpha,\lambda}^r$ is a basis of $\qsym^r$ and is defined as 
\begin{equation*} 
    M_{\alpha,\lambda}^r = \sum_{\beta:\rsrt(\beta)=\alpha;\lambda}M_\beta.
\end{equation*}
In much the same way as above, we can define fillings for QSym$^r$, denoted $\sd^r$, to define a combinatorial way to change the basis from $P$ to $M$.
\begin{equation*} 
    P_{\alpha,\lambda}^r=\sum_{\substack{\mathsf{F}\in \sd^r(\alpha;\lambda) \\\sigma \in \mathfrak{S}_\mathsf{F}}}M_{\col(\sigma(\mathsf{F}))}^r.
\end{equation*}
This also has the property that $P_{\alpha;\mu}^r=\sum_{\rsrt(\beta)=\alpha;\mu}P_\beta$. This will be explored more in a future paper.

\subsection{Cyclic Quasisymmetric Functions}

In \cite{adin2021cyclic} the authors defined a different action on QSym called the cyclic shift, denoted $\varsigma_i$. The invariant space of QSym under the cyclic shift is the space of cyclic quasisymmetric functions, cQSym, note that this isn't a Hopf algebra, due to the fact that cQSym is not preserved under the coproduct. In \cite{adin2021cyclic}, the authors use set notation, however out of familiarity we will use composition notation. Let $\alpha = (a_1,\ldots,a_k)$ be a composition, then the cyclic shift of $\alpha$ is $\varsigma(\alpha) = (a_k,a_1,\ldots,a_{k-1})$. Thus the monomial basis of cQSym is 
\begin{equation}\label{eqn:cQSym}
    \Hat{M}_\alpha = \sum_{i = 1}^k M_{\varsigma^i(\alpha)}.
\end{equation}

For example, $\Hat{M}_{(3,1,2)} = M_{(3,1,2)}+ M_{(2,3,1)} + M_{(1,2,3)}$ and $\Hat{M}_{(2,1,2,1)} =  2M_{(2,1,2,1)} + 2M_{(1,2,1,2)}$. The powersums for cQSym can also be defined in terms of fillings (with an equivalence relation on fillings by cycling the columns) and row permutations
\[
\hat{P}_\alpha = \sum_{\substack{\mathsf{F}\in\hat{\sd}(\alpha)\\ \sigma\in\Hat{\mathfrak{S}}_{\mathsf{F}}}}M_{\col(\sigma(\mathsf{F}))}.
\]
For example, $\Hat{P}_{(2,1,2,1)}=2\Hat{M}_{(2,1,2,1)}+4\Hat{M}_{(3,1,2)}+2\Hat{M}_{(3,3)}$. Contrary to the scaling of \eqref{eqn:cQSym}, the cyclic quasipowersums has either a coefficient of 1 or 0 when expanding a cyclic quasipowersum in terms of quasipowersums. From the example above, $\Hat{P}_{(2,1,2,1)} = P_{(2,1,2,1)}+P_{1,2,1,2}$. Thus the alternate definition is
\[ 
\hat{P}_\alpha = \sum_{i\in I}P_{\varsigma^i(\alpha)}.
\]
where $I = [j]$ where $j$ is the minimum integer such that $\varsigma^{j+1}(\alpha) = \varsigma(\alpha)$.

\printbibliography

\end{document}